\newcommand{\con}{\operatorname{con}}
\newcommand{\mul}{\operatorname{mul}}
\newcommand{\simp}{\operatorname{sim}}
\newcommand{\var}{\operatorname{var}}
\newtheorem{theorem}{Theorem}[section]
\newtheorem{lemma}[theorem]{Lemma}
\newtheorem{example}[theorem]{Example}
\newtheorem{corollary}[theorem]{Corollary}
\newtheorem{proposition}[theorem]{Proposition}
\newtheorem{fact}[theorem]{Fact}
\newtheorem{remark}[theorem]{Remark}
\newtheorem{sortlemma}{Sorting Lemma}
\newtheorem*{sufcon}{Sufficient Condition}
\theoremstyle{definition}
\numberwithin{equation}{section}
\begin{document}

\title{Classification of limit varieties\\ of $\mathscr J$-trivial monoids}

\author{S. V. Gusev and O. B. Sapir}

\date{}

\maketitle

\renewcommand{\thefootnote}{\fnsymbol{footnote}}
\footnotetext{2010 \emph{Mathematics subject classification}. 20M07.}   
\footnotetext{\emph{Key words and phrases}. $\mathscr J$-trivial monoid, variety, limit variety, finite basis problem.}
\footnotetext{The first-named author is supported by the Ministry of Science and Higher Education of the Russian Federation (project FEUZ-2020-0016).}    
\renewcommand{\thefootnote}{\arabic{footnote}} 

\begin{abstract} 
A variety of algebras is a \textit{limit variety} if it is non-finitely based but all its proper subvarieties are finitely based.

We present a new pair of limit varieties of monoids and show that together with the five limit varieties of monoids previously discovered by Jackson, Zhang and Luo and the first-named author, there are exactly seven limit varieties of $\mathscr J$-trivial monoids.
\end{abstract}

\section{Introduction}
\label{sec: intr}

A variety of algebras is called \textit{finitely based} (abbreviated to FB) if it has a finite basis of its identities, otherwise, the variety is said to be \textit{non-finitely based} (abbreviated to NFB).
Much attention is paid to studying of FB and NFB varieties of algebras of various types. 
In particular, the FB and NFB varieties of semigroups and monoids have been the subject of intensive research (see the survey~\cite{Volkov-01}).

A variety is \textit{hereditary finitely based} (abbreviated to HFB) if all its subvarieties are FB. 
A variety is called a \textit{limit variety} if it is NFB but every its proper subvariety is FB. 
Limit varieties play an important role because every NFB variety contains some limit subvariety by Zorn's lemma. 
It follows that a variety is HFB if and only if it does not contain any limit variety. 
So, if one manages to classify all limit varieties within some class of varieties, then this classification implies a description of all HFB varieties in this class.

Limit varieties are very rare. 
Only five explicit examples of limit varieties of monoids are known so far.
The first two examples of limit monoid varieties $\mathbf L$ and $\mathbf M$ were discovered by Jackson~\cite{Jackson-05} in 2005 (the formal definitions of these varieties will be given in Subsection~\ref{construction}). 
In 2013, Zhang found a NBF variety of monoids that does not contain the varieties $\mathbf L$ and $\mathbf M$~\cite{Zhang-13} and, therefore, she proved that there exists a limit variety of monoids that differs from $\mathbf L$ and $\mathbf M$. 
In~\cite{Zhang-Luo}, Zhang and Luo pointed out an explicit example of such variety. 

If $S$ is a semigroup, then the monoid obtained by adjoining a new identity element to $S$ is denoted by $S^1$. 
If $M$ is a monoid, then the variety of monoids generated by $M$ is denoted by $\var M$. 
Let $\mathbf A^1=\var A^1$, where
\[
A=\langle a,b,c\mid a^2=a,\,b^2=b,\,ab=ca=0,\,ac=cb=c\rangle=\{a,b,c,ba,bc,0\}.
\]
The semigroup $A$ was introduced by its multiplication table and shown to be FB in \cite[Section~19]{Lee-Zhang}.
Its presentation was recently suggested by Edmond W. H. Lee. 
If $\mathbf V$ is a monoid variety, then  $\overleftarrow{\mathbf V}$ denotes the variety \textit{dual to} $\mathbf V$, i.e., the variety consisting of monoids anti-isomorphic to monoids from $\mathbf V$.
The variety $\mathbf A^1\vee\overleftarrow{\mathbf A^1}$ is the third example of limit variety of monoids~\cite{Zhang-Luo} mentioned in the previous paragraph.
The fourth and the fifth examples of limit monoid varieties $\mathbf J$ and $\overleftarrow{\mathbf J}$ are provided in~\cite{Gusev-SF} (the formal definition of the variety $\mathbf J$ will be given in Subsection~\ref{construction}).

In~\cite{Green-51}, Green introduces five equivalence relations on a semigroup. 
These relations are collectively referred to as Green's relations, and play a fundamental role in studying semigroups. 
We recall the definition of one of them, which we use in present paper. 
For elements $a$ and $b$ of a semigroup $S$, Green's relation $\mathscr J$ is defined by
$$
a \mathscr J b\ \text{ if and only if }\ S^1 aS^1 = S^1 bS^1, \text{ i.e., } a \text{ and } b \text{ generate the same ideal.}
$$
A semigroup $S$ is called $\mathscr J$-\textit{trivial} if Green's relation $\mathscr J$ is the equality relation.

It turns out that all  known examples of limit varieties of monoids are varieties of $\mathscr J$-trivial monoids. 
In this article, we present a new pair of limit varieties of monoids $\mathbf K$ and $\overleftarrow{\mathbf K}$ and provide the following classification of limit varieties of $\mathscr J$-trivial monoids, which is the main result of the article.

\begin{theorem} 
\label{T: main} 
A variety of $\mathscr J$-trivial monoids is HFB if and only if it excludes the varieties $\mathbf A^1\vee\overleftarrow{\mathbf A^1}$, $\mathbf J$, $\overleftarrow{\mathbf J}$, $\mathbf K$, $\overleftarrow{\mathbf K}$, $\mathbf L$ and $\mathbf M$. 
Consequently, there are precisely seven limit varieties of $\mathscr J$-trivial monoids.
\end{theorem}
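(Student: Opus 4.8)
The plan is to reduce the whole statement to a single classification claim — that the seven listed varieties are exactly the limit varieties of $\mathscr J$-trivial monoids — and then read off both assertions of the theorem from it. The link is the general principle recorded in the introduction: by Zorn's lemma every NFB variety contains a limit subvariety, so a variety is HFB if and only if it contains no limit variety. Thus the two clauses of the theorem become one statement once the complete list of limit varieties in this class is known.

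The easy direction comes first. Each of the seven varieties is a limit variety — the five classical ones by the cited results of Jackson, Zhang--Luo and the first-named author, and the new pair $\mathbf K$, $\overleftarrow{\mathbf K}$ by the results established earlier in the paper — and in particular each of the seven is NFB. Hence if $\mathbf V$ is HFB it can contain none of them, since an HFB variety has only FB subvarieties while each of the seven is NFB. This yields ``HFB $\Rightarrow$ excludes the seven'' with no further work.

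The substance is the converse, which I would phrase contrapositively: if a variety $\mathbf V$ of $\mathscr J$-trivial monoids is not HFB, then it contains one of the seven. By the Zorn principle $\mathbf V$ contains some limit subvariety, so it suffices to prove the classification, namely that every limit variety $\mathbf W$ of $\mathscr J$-trivial monoids is one of the seven. To prove this I would take an arbitrary limit variety $\mathbf W$ distinct from all seven and derive a contradiction. The attack is combinatorial, through the identity theory of $\mathscr J$-trivial monoids: one records which words are isoterms for $\mathbf W$ and which of the small building-block monoids (notably $A^1$ and the monoids defining $\mathbf J$, $\mathbf K$, $\mathbf L$, $\mathbf M$, together with their duals) lie in $\mathbf W$. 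A case analysis on this data should show that either $\mathbf W$ satisfies enough identities to be forced into a well-understood hereditarily finitely based ``small'' region — contradicting that $\mathbf W$ is NFB — or $\mathbf W$ properly contains one of the seven critical varieties, contradicting the minimality built into the definition of a limit variety.

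The classification then delivers the ``consequently'' clause: any limit variety contains one of the seven, each of which is itself a limit variety, so by minimality the containment is an equality, and the limit varieties are exactly these seven. The hard part, carrying essentially all the technical weight, is the case analysis in the classification step. Ruling out every hypothetical extra limit variety demands a tight description of the FB/NFB boundary for $\mathscr J$-trivial monoid varieties and the explicit word-combinatorics that detect the critical monoids — in particular the new monoids underlying $\mathbf K$ and $\overleftarrow{\mathbf K}$, whose presence is what distinguishes the seven-element list from the previously known five.
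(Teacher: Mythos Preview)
Your proposal is correct and aligns with the paper's proof: Corollary~\ref{C: main} is precisely the dichotomy you sketch (any $\mathscr J$-trivial monoid variety is either HFB or contains one of the seven), and together with Proposition~\ref{P: K is a limit variety} (that $\mathbf K$, $\overleftarrow{\mathbf K}$ are new limit varieties distinct from the other five) it yields the theorem exactly as you describe. The only structural difference is that the paper does not pass through Zorn's lemma to reduce to limit varieties first---the case analysis (Sorting Lemma~\ref{SL: 2}, built on Proposition~\ref{P: hfb}) is carried out directly for an arbitrary $\mathscr J$-trivial variety, so the Zorn step is unnecessary, though harmless.
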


A monoid is \textit{aperiodic} if all its subgroups are trivial.
We note that several classifications of limit varieties of monoids were obtained earlier in some other classes. 
Namely, Lee proved in~\cite{Lee-09} the uniqueness of the limit varieties $\mathbf L$ and $\mathbf M$ in the class of varieties of finitely generated aperiodic monoids with central idempotents. 
In~\cite{Lee-12}, Lee generalized the result of~\cite{Lee-09} and established that  $\mathbf L$ and $\mathbf M$ are the only limit varieties within the class of varieties of aperiodic monoids with central idempotents. 
Just recently, the first-named author~\cite{Gusev-JAA} proved that a variety of aperiodic monoids with commuting idempotents is HFB if and only if it excludes the varieties ${\mathbf J}$, $\overleftarrow{\mathbf J}$, ${\mathbf L}$ and ${\mathbf M}$.

The article consists of six sections. 
Section~\ref{preliminaries} contains definitions, notation, auxiliary results and introduces the varieties $\mathbf K$ and $\overleftarrow{\mathbf K}$. 
In Section~\ref{suf cond} we provide a sufficient condition under which a monoid variety is HFB.
In Section~\ref{some monoids1} we provide a classification of aperiodic  monoid varieties, which implies that every variety of $\mathscr J$-trivial monoids is either HFB or contains  one of the varieties $\mathbf A^1\vee\overleftarrow{\mathbf A^1}$, $\mathbf J$, $\overleftarrow{\mathbf J}$, $\mathbf K$, $\overleftarrow{\mathbf K}$, $\mathbf L$ or $\mathbf M$ (Corollary~\ref{C: main}). 
In Section~\ref{new example} we show that $\mathbf K$ and $\overleftarrow{\mathbf K}$ are new limit varieties of monoids (Proposition~\ref{P: K is a limit variety}). 
Thus, we provide the proof of Theorem~\ref{T: main}.
Finally, Section~\ref{subvar lattice} is devoted to a description of the subvariety lattice of the limit variety $\bf K$.

\section{Preliminaries}
\label{preliminaries}

\subsection{Varieties of $\mathscr J$-trivial monoids}

The following claim is well-known but we provide its proof for the sake of completeness.

\begin{fact}
\label{F: J-trivial} 
Let $\mathbf V$ be a variety of $\mathscr J$-trivial monoids. Then $\mathbf V$ satisfies the identities
\begin{equation}
\label{id J-trivial}
x^n \approx x^{n+1}\ \text{ and }\ (xy)^n \approx (yx)^n
\end{equation} 
for some $n\ge1$.
\end{fact}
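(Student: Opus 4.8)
The plan is to produce a single exponent $n$ that works for both identities, and the whole argument reduces to one key observation: the variety $\mathbf V$ cannot contain the infinite cyclic monoid $C=\{1,x,x^2,\dots\}$ (the free monoid on one generator). Indeed, the map $x^k\mapsto k \bmod 2$ is a surjective monoid homomorphism from $C$ onto the two-element group $\mathbb Z_2$, which is a nontrivial group and hence not $\mathscr J$-trivial. Since a variety is closed under homomorphic images, $C\in\mathbf V$ would force $\mathbb Z_2\in\mathbf V$, contradicting the assumption that every member of $\mathbf V$ is $\mathscr J$-trivial. Now if $\mathbf V$ satisfied no nontrivial identity of the form $x^i\approx x^j$ with $i\ne j$, then the relatively free monoid of $\mathbf V$ on one generator would coincide with the absolutely free monogenic monoid, i.e. with $C$, and $C\in\mathbf V$ would follow. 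Therefore $\mathbf V$ must satisfy some identity $x^i\approx x^{i+p}$ with $i,p\ge 1$. This step is the main obstacle; once it is in place the rest is short manipulation with Green's relation $\mathscr J$.

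Next I would upgrade this to aperiodicity. Working in the (relatively free, hence $\mathscr J$-trivial) monoid $S$ on one generator $x$, the identity $x^i\approx x^{i+p}$ gives $x^{i+1}=x\cdot x^i\in S^1x^iS^1$ and $x^i=x^{i+p}=x^{p-1}\cdot x^{i+1}\in S^1x^{i+1}S^1$ (the latter being trivial when $p=1$). Thus $x^i\mathrel{\mathscr J}x^{i+1}$, and $\mathscr J$-triviality forces $x^i=x^{i+1}$. Setting $n=i$, this shows $\mathbf V\models x^n\approx x^{n+1}$, and consequently $z^n=z^{n+1}$ for every element $z$ of every member of $\mathbf V$.

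Finally I would derive the commutation identity with the \emph{same} $n$. Fix a member $M$ of $\mathbf V$ and arbitrary $x,y\in M$. Using $z^n=z^{n+1}$ with $z=xy$ and with $z=yx$, together with the factorizations $(xy)^{n+1}=x(yx)^ny$ and $(yx)^{n+1}=y(xy)^nx$, I obtain
\[
(xy)^n=(xy)^{n+1}=x(yx)^ny\in M^1(yx)^nM^1,\qquad (yx)^n=(yx)^{n+1}=y(xy)^nx\in M^1(xy)^nM^1 .
\]
Hence $(xy)^n\mathrel{\mathscr J}(yx)^n$, so $\mathscr J$-triviality of $M$ yields $(xy)^n=(yx)^n$. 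As $M$ and $x,y$ were arbitrary, $\mathbf V\models (xy)^n\approx(yx)^n$, completing the verification of both identities in~\eqref{id J-trivial} for this common $n$.
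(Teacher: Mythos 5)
Your proof is correct and follows essentially the same route as the paper: the second half (the factorizations $(xy)^{n+1}=x(yx)^ny$ and $(yx)^{n+1}=y(xy)^nx$ placing $(xy)^n$ and $(yx)^n$ in the same $\mathscr J$-class) is exactly the paper's argument. The only difference is in the first identity, where the paper simply asserts that triviality of all groups in $\mathbf V$ forces aperiodicity and hence $x^n\approx x^{n+1}$, while you make this self-contained by excluding the free monogenic monoid via its $\mathbb Z_2$ quotient and then collapsing the period to $1$ using $\mathscr J$-triviality of the relatively free monogenic monoid --- a correct and slightly more explicit version of the same step.
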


\begin{proof} 
Since the Green's relation $\mathscr J$ on any group coincides with the universal relation, all the groups of $\mathbf V$ are trivial. 
This implies that $\mathbf V$ is aperiodic and so satisfies the identity $x^n\approx x^{n+1}$ for some $n\ge1$.
 
Let $M$ be a monoid from $\mathbf V$ and $a,b\in M$. 
Then $(ab)^n = (ab)^{n+1} = a(ba)^nb$ and similarly,  $(ba)^n = b(ab)^na$. Therefore, $(ab)^n$ and $(ba)^n$ lie in the same $\mathscr J$-class of $M$. 
Since $M$ is $\mathscr J$-trivial, $(ab)^n=(ba)^n$.
It follows that $M$ and so $\mathbf V$ satisfy $(xy)^n \approx (yx)^n$.
\end{proof}

\subsection{Words, identities and Dilworth-Perkins construction}

Let $\mathfrak A$ be a countably infinite set called an \textit{alphabet}. 
As usual, let~$\mathfrak A^+$ and~$\mathfrak A^\ast$ denote the free semigroup and the free monoid over the alphabet~$\mathfrak A$, respectively. 
Elements of~$\mathfrak A$ are called \textit{letters} and elements of~$\mathfrak A^\ast$ are called \textit{words}. 
We treat the identity element of~$\mathfrak A^\ast$ as \textit{the empty word}, which is denoted by~$1$.  
Words and letters are denoted by small Latin letters. 
However, words unlike letters are written in bold. 
The \textit{content} of a word $\mathbf w$, i.e., the set of all letters occurring in $\mathbf w$, is denoted by $\con(\mathbf w)$. 
A letter is called \textit{simple} [\textit{multiple}] \textit{in a word} $\mathbf w$ if it occurs in $\mathbf w$ once [at least twice]. 
The set of all simple [multiple] letters in a word $\mathbf w$ is denoted by $\simp(\mathbf w)$ [respectively $\mul(\mathbf w)$]. 
We use $W^\le$ to denote the closure of a set of words $W\subset\mathfrak A^\ast$ under taking subwords.

The following construction was introduced by Perkins~\cite{Perkins-69} to build the first two examples of finitely generated NFB varieties of semigroups (although in essence it appears in~\cite{MH}, where it is attributed to Dilworth). 
For any set of words $W$, let $M(W)$ denote the Rees quotient monoid of $\mathfrak A^\ast$ over the ideal $\mathfrak A^\ast \setminus  {W}^{\le}$ consisting of all words that are not subwords of any word in $W$. 
Given a finite set of words $W$, $M(W)$ is a finite $\mathscr J$-trivial monoid with zero.

\subsection{Generalized Dilworth-Perkins construction}
\label{construction}

The following generalization of $M(W)$ construction was introduced by the second-named author in~\cite{Sapir-18,Sapir-21}. 
Given a congruence $\tau$ on the free monoid $\mathfrak A^\ast$ we use $\circ{_\tau}$ to denote the binary operation on the quotient monoid $\mathfrak A^\ast/\tau$.  
The elements of $\mathfrak A^\ast/\tau$ are called $\tau$-\textit{words} or $\tau$-\textit{classes} and written using lowercase letters in the typewriter style\footnote{We call the elements of $\mathfrak A^\ast/\tau$ by $\tau$-\textit{words} when we want to emphasize the relations between them and we refer to  ${\mathtt u} \in \mathfrak A^\ast/\tau$ as a $\tau$-\textit{class} when we are interested in the description of the words contained in ${\mathtt u}$.}. 
The subword relation on $\mathfrak A^\ast$ can be naturally extended to $\tau$-words as follows: given two $\tau$-words ${\mathtt u}, {\mathtt v} \in \mathfrak A^\ast/\tau$  we write ${\mathtt v} \le_\tau {\mathtt u}$ if ${\mathtt u} = {\mathtt p}\circ_\tau {\mathtt v}\circ_\tau {\mathtt s}$ for some   ${\mathtt p}, {\mathtt s} \in \mathfrak A^\ast/\tau$.
Given a set of $\tau$-words ${\mathtt W}$ we define ${\mathtt W}^{\le_\tau}$ as closure of ${\mathtt W}$ in quasi-order $\le_\tau$.
If $\mathtt W$ is a set of $\tau$-words, then $M_\tau(\mathtt W)$ denotes the Rees quotient of $\mathfrak A^\ast /\tau$ over the ideal $(\mathfrak A^\ast/ \tau) \setminus  {\mathtt W}^{\le_\tau}$.
For brevity, if $\mathtt w_1,\mathtt w_2,\dots,\mathtt w_k\in\mathfrak A^\ast/ \tau$, then we write $M_\tau(\mathtt w_1,\mathtt w_2,\dots,\mathtt w_k)$ rather than $M_\tau(\{\mathtt w_1,\mathtt w_2,\dots,\mathtt w_k\})$.
If $\tau$ is the trivial congruence on  $\mathfrak A^\ast$, then $M_\tau({\mathtt W})$ construction coincides with Dilworth-Perkins construction.

Let $\tau_1$ denote the congruence on the free monoid $\mathfrak A^\ast$ induced by the relations $a=aa$ for each $a \in\mathfrak A^\ast$.
We refine $\tau_1$ into three more congruences as follows.
Given  ${\bf u}, {\bf v} \in \mathfrak A^\ast$, we define:
\begin{itemize}
\item ${\bf u} \mathrel{\gamma} {\bf v}$ if and only if ${\bf u}  \mathrel{\tau_1} {\bf v}$ and  $\mul({\bf u}) =  \mul({\bf v})$;
\item ${\bf u}  \mathrel{\lambda} {\bf v}$ if and only if ${\bf u}  \mathrel{\gamma} {\bf v}$ and the first two occurrences of each  multiple  letter are adjacent in $\bf u$ if and only if these occurrences are  adjacent in $\bf v$;
\item ${\bf u}  \mathrel{\rho} {\bf v}$ if and only if ${\bf u}  \mathrel{\gamma} {\bf v}$ and the last two occurrences
of each multiple letter are adjacent in $\bf u$ if and only if these occurrences are  adjacent in $\bf v$.
\end{itemize}

Notice that the relations $\rho$ and $\lambda$ are dual to each other. 
If $\tau\in\{\tau_1,\gamma, \lambda, \rho\}$, then it is verified in Proposition~8.5 in~\cite{Sapir-21} that the relation $\le_\tau$ is an order on $\mathfrak A^\ast/\tau$ and given a finite set of $\tau$-words $\mathtt W$, $M_\tau({\mathtt W})$ is a finite $\mathscr J$-trivial monoid with zero.

We say that a $\tau$-word  ${\mathtt u} \in \mathfrak A^\ast/\tau$ is a $\tau$-\textit{term} for a variety $\mathbf V$ if, for any word ${\bf u}$ in ${\mathtt u}$, we have $\mathbf u\mathrel{\tau}\mathbf v$ whenever $\mathbf V$ satisfies $\mathbf u\approx\mathbf v$. 
The following lemma generalizes Lemma~3.3 in~\cite{Jackson-05}.

\begin{lemma}[\mdseries{\cite[Corollary~3.6]{Sapir-21}}]
\label{L: S_alpha in V1} 
Let  $\tau$ be either the trivial congruence or $\tau \in \{\tau_1, \gamma\}$.
Let $\mathtt W$ be a subset of $\mathfrak A^\ast / \tau$.
Then a monoid variety $\mathbf V$ contains $M_{\tau}(\mathtt W)$ if and only if every $\tau$-word in $\mathtt W$ is a $\tau$-term for $\mathbf V$.\qed
\end{lemma}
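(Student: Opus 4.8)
The plan is to argue the two implications through the canonical homomorphism $\phi\colon\mathfrak A^\ast\to M_\tau(\mathtt W)$ obtained by composing the quotient map $\mathfrak A^\ast\to\mathfrak A^\ast/\tau$ with the Rees quotient map onto $M_\tau(\mathtt W)$; for a word $\mathbf w$ one has $\phi(\mathbf w)=[\mathbf w]_\tau$ whenever $[\mathbf w]_\tau\in\mathtt W^{\le_\tau}$, and $\phi(\mathbf w)=0$ otherwise. For the forward implication I would assume $M_\tau(\mathtt W)\in\mathbf V$ and fix $\mathtt u\in\mathtt W$, an arbitrary word $\mathbf u$ in $\mathtt u$, and an identity $\mathbf u\approx\mathbf v$ holding in $\mathbf V$. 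Since $\mathtt u\in\mathtt W\subseteq\mathtt W^{\le_\tau}$, the element $\phi(\mathbf u)=\mathtt u$ is nonzero; as $M_\tau(\mathtt W)$ satisfies $\mathbf u\approx\mathbf v$, applying $\phi$ gives $\phi(\mathbf v)=\mathtt u\ne0$, so $\phi(\mathbf v)$ coincides with $[\mathbf v]_\tau$ and therefore $[\mathbf v]_\tau=\mathtt u=[\mathbf u]_\tau$, that is $\mathbf u\mathrel\tau\mathbf v$. As $\mathbf u$ was arbitrary in $\mathtt u$, this shows $\mathtt u$ is a $\tau$-term for $\mathbf V$.

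For the converse I would assume every $\tau$-word in $\mathtt W$ is a $\tau$-term for $\mathbf V$ and verify that $M_\tau(\mathtt W)$ satisfies each identity $\mathbf s\approx\mathbf t$ of $\mathbf V$. If a homomorphism $\psi\colon\mathfrak A^\ast\to M_\tau(\mathtt W)$ sends every letter occurring in $\mathbf s$ or $\mathbf t$ to a nonzero element, then $\psi=\phi\circ\sigma$ for the word substitution $\sigma$ assigning to each letter a representative of its image, so $\psi(\mathbf s)=\phi(\sigma(\mathbf s))$ and $\psi(\mathbf t)=\phi(\sigma(\mathbf t))$ with $\mathbf V\models\sigma(\mathbf s)\approx\sigma(\mathbf t)$ (here I use that $\mathbf V$ is closed under substitution). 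The situation in which some letter is sent to $0$ is disposed of routinely by setting the absent letters to the identity element. Consequently the converse reduces to the following core claim: if $\mathbf V\models\mathbf a\approx\mathbf b$ and $\phi(\mathbf a)\ne0$, then $\phi(\mathbf a)=\phi(\mathbf b)$.

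To establish the core claim I would choose $\mathtt u\in\mathtt W$ with $[\mathbf a]_\tau\le_\tau\mathtt u$ and write $\mathtt u=\mathtt p\circ_\tau[\mathbf a]_\tau\circ_\tau\mathtt q$; fixing representatives $\mathbf p\in\mathtt p$ and $\mathbf q\in\mathtt q$, the word $\mathbf p\mathbf a\mathbf q$ lies in the $\tau$-class $\mathtt u$ while $\mathbf V\models\mathbf p\mathbf a\mathbf q\approx\mathbf p\mathbf b\mathbf q$. Because $\mathtt u$ is a $\tau$-term for $\mathbf V$, this forces $\mathbf p\mathbf a\mathbf q\mathrel\tau\mathbf p\mathbf b\mathbf q$; in particular $[\mathbf b]_\tau\le_\tau\mathtt u$, so $\phi(\mathbf b)\ne0$, and what remains is to cancel $\mathtt p$ and $\mathtt q$ in the equality $\mathtt p\circ_\tau[\mathbf a]_\tau\circ_\tau\mathtt q=\mathtt p\circ_\tau[\mathbf b]_\tau\circ_\tau\mathtt q$ so as to obtain $[\mathbf a]_\tau=[\mathbf b]_\tau$, i.e. $\phi(\mathbf a)=\phi(\mathbf b)$.

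I expect this final cancellation to be the main obstacle, and it is exactly where the restriction $\tau\in\{\text{trivial},\tau_1,\gamma\}$ is essential. For the trivial congruence $\mathfrak A^\ast/\tau$ is the free monoid, cancellation is automatic, and one recovers the argument of \cite{Jackson-05}. For $\tau_1$ and $\gamma$ the monoid $\mathfrak A^\ast/\tau$ is \emph{not} cancellative, since runs of a letter at the junctions $\mathbf p\mathbf a$ and $\mathbf a\mathbf q$ may collapse; the point is that the $\tau$-term property of the larger $\tau$-word $\mathtt u$ rules out precisely such collapses. Making this rigorous is the genuinely new combinatorial content: it rests on the explicit normal forms for $\tau_1$ and $\gamma$ (the run-collapsed word, refined for $\gamma$ by the set $\mul$ of multiple letters) together with the antisymmetry of $\le_\tau$ from Proposition~8.5 of \cite{Sapir-21}. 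The cleanest way to package it is to prove that the class of $\tau$-terms for $\mathbf V$ is closed downward under $\le_\tau$, from which the core claim—and hence the converse—follows at once.
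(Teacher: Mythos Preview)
The present paper does not prove this lemma; it is imported from \cite{Sapir-21} (Corollary~3.6 there) and stated with a terminal \qed. Your outline matches the architecture of that source: the forward direction is the routine one, and for the converse the decisive ingredient is exactly the downward closure of the set of $\tau$-terms for $\mathbf V$ under $\le_\tau$ when $\tau$ is trivial, $\tau_1$, or $\gamma$. This is what \cite{Sapir-21} establishes en route to its Corollary~3.6, and it is the same phenomenon that the companion Fact~\ref{F:2-limited} here records for $\lambda$ and $\rho$ under the $2$-island-limited hypothesis. So your identification of the ``cancellation'' step as the real content, and your proposed resolution via downward closure rather than via literal cancellation in $\mathfrak A^\ast/\tau$, are both on target.

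One small correction to your reduction from an arbitrary $\psi$ to the canonical $\phi$: when a letter $x$ occurring in $\mathbf s$ or $\mathbf t$ has $\psi(x)=0$, setting $\sigma(x)$ to the identity element does \emph{not} give $\psi=\phi\circ\sigma$, since then $\phi(\sigma(x))=1\ne 0$. The clean fix is to choose for such $x$ any word whose $\tau$-class lies outside $\mathtt W^{\le_\tau}$; such words exist precisely when $M_\tau(\mathtt W)$ has a zero, which is the only case in which the difficulty arises. With that choice one has $\phi(\sigma(x))=0=\psi(x)$ for every letter, the factorisation $\psi=\phi\circ\sigma$ holds literally, and your core claim applies directly without further case analysis.
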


If ${\mathbf u} \in \mathfrak A^\ast$ and $x \in \con({\bf u})$, then an \textit{island} formed by $x$ in ${\bf u}$ is a maximal subword of $\bf u$, which is a power of $x$. 
For example, the word $ab^2a^5ba^3$ has three islands formed by $a$ and two islands formed by $b$.
We say that  ${\mathbf u} \in \mathfrak A^\ast$ is $2$-\textit{island-limited} if each letter forms at most $2$ islands in $\bf u$. For example, the word $asbtb^5a^7$ is $2$-island-limited.
Given  $\tau\in\{\tau_1,\gamma, \lambda, \rho\}$ we say that  ${\mathtt u} \in \mathfrak A^\ast/\tau$ is  $2$-\textit{island-limited}  if $\bf u$ is $2$-island-limited for each ${\bf u} \in {\mathtt u}$.

\begin{fact}[\mdseries{\cite[Lemma~6.3 and it's dual]{Sapir-21}}]
\label{F:2-limited}
Let $\tau \in \{\lambda, \rho\}$ and $\mathtt u$ be a $2$-island-limited $\tau$-word. 
If $\mathtt u$ is a $\tau$-term for a monoid variety $\mathbf V$, then every $\tau$-word $\mathtt v$ with $\mathtt v \le_\tau \mathtt u$ is also a $\tau$-term for $\mathbf V$.
\end{fact}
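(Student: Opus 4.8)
The plan is to verify the definition of a $\tau$-term directly. To show that $\mathtt v$ is a $\tau$-term for $\mathbf V$, I must check that for every word $\mathbf v$ in $\mathtt v$ and every word $\mathbf v'$ with $\mathbf V$ satisfying $\mathbf v\approx\mathbf v'$ one has $\mathbf v\mathrel\tau\mathbf v'$. Since $\lambda$ and $\rho$ are dual, it suffices to treat $\tau=\lambda$. So I fix such $\mathbf v,\mathbf v'$ and, aiming at a contradiction, I assume $\mathbf v\not\mathrel\lambda\mathbf v'$.

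Since $\mathtt v\le_\lambda\mathtt u$, I write $\mathtt u=\mathtt p\circ_\lambda\mathtt v\circ_\lambda\mathtt s$ and fix any words $\mathbf p\in\mathtt p$ and $\mathbf s\in\mathtt s$, so that $\mathbf p\mathbf v\mathbf s$ is a word in $\mathtt u$. As identities are preserved under multiplication on either side, $\mathbf V$ satisfies $\mathbf p\mathbf v\mathbf s\approx\mathbf p\mathbf v'\mathbf s$. Because $\mathtt u$ is a $\lambda$-term for $\mathbf V$ and $\mathbf p\mathbf v\mathbf s$ lies in $\mathtt u$, this forces $\mathbf p\mathbf v\mathbf s\mathrel\lambda\mathbf p\mathbf v'\mathbf s$; in particular $\mathbf p\mathbf v'\mathbf s$ also lies in the $\lambda$-class $\mathtt u$. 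Here the hypothesis enters for the first time: since \emph{every} word in $\mathtt u$ is $2$-island-limited, both $\mathbf p\mathbf v\mathbf s$ and $\mathbf p\mathbf v'\mathbf s$ are $2$-island-limited, and hence so are their factors $\mathbf v$ and $\mathbf v'$.

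It then remains to prove the cancellation statement: if $\mathbf p\mathbf v\mathbf s$ and $\mathbf p\mathbf v'\mathbf s$ are both $2$-island-limited and $\lambda$-equivalent, then $\mathbf v\mathrel\lambda\mathbf v'$, which contradicts $\mathbf v\not\mathrel\lambda\mathbf v'$ and closes the argument. I would establish this by comparing, letter by letter, the three ingredients defining $\lambda$: the $\tau_1$-class (free-band structure), the set $\mul$ of multiple letters, and the adjacency of the first two occurrences of each multiple letter. The guiding observation is that the $2$-island bound localizes the distinguishing data inside the middle factor: if a letter $x$ is multiple in $\mathbf v$ with its first two occurrences non-adjacent, then $x$ already forms two islands inside $\mathbf v$, so the bound forces $x\notin\con(\mathbf p)\cup\con(\mathbf s)$; consequently the occurrences of $x$ in $\mathbf p\mathbf v\mathbf s$ are exactly those in $\mathbf v$, and its membership in $\mul$ and its first-occurrence adjacency are transmitted faithfully from $\mathbf v$ to $\mathbf p\mathbf v\mathbf s$, and symmetrically for $\mathbf v'$.

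The main obstacle is the residual boundary analysis: letters shared between the context $\mathbf p,\mathbf s$ and the middle factor, and letters whose occurrences in $\mathbf v$ abut the context, i.e.\ a leading or trailing island of $\mathbf v$ merging with an island of $\mathbf p$ or $\mathbf s$. For such a letter the ``first two occurrences'' in the sandwiched word can be pinned down at the $\mathbf p$--$\mathbf v$ junction rather than inside $\mathbf v$, so one must argue that the $2$-island bound, now applied on both sides simultaneously, leaves no room for the context to create or destroy an adjacency or a multiplicity separating $\mathbf v$ from $\mathbf v'$. This requires a careful comparison of the island patterns of $\mathbf p\mathbf v\mathbf s$ and $\mathbf p\mathbf v'\mathbf s$ at their two ends, together with the observation that a common prefix and suffix, although not cancellable in the free band in general, can be cancelled here precisely because the island bound rigidifies the structure at the junctions. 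The statement for $\rho$ then follows by the left--right dual of the whole argument.
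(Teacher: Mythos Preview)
The paper does not prove this fact; it is quoted without proof from \cite[Lemma~6.3]{Sapir-21}, so there is no internal argument to compare your proposal against.

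That said, your approach has a genuine gap: the cancellation step you announce in the last two paragraphs is simply false, even under the $2$-island-limited hypothesis. Take $\mathbf p=a$, $\mathbf s=1$, $\mathbf v=ab$, $\mathbf v'=aab$. Then $\mathbf p\mathbf v\mathbf s=aab$ and $\mathbf p\mathbf v'\mathbf s=aaab$ are both $2$-island-limited and $\lambda$-equivalent (same $\tau_1$-class $ab$, same $\mul=\{a\}$, first two occurrences of $a$ adjacent in each), yet $\mathbf v\not\mathrel\lambda\mathbf v'$ since $\mul(ab)=\emptyset\ne\{a\}=\mul(aab)$. Here $\mathtt v=[ab]\le_\lambda[aab]=\mathtt u$ via $\mathtt p=[a]$ and $\mathtt s=[1]$, and these classes are singletons, so no alternative choice of representatives rescues the argument. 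Thus your assertion that ``a common prefix and suffix \dots\ can be cancelled here precisely because the island bound rigidifies the structure at the junctions'' is incorrect: the junction with $\mathbf p$ can absorb a simple-versus-multiple distinction that you cannot recover from the single sandwiched relation.

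What is missing is that one must exploit the identity $\mathbf v\approx\mathbf v'$ of $\mathbf V$ through \emph{substitutions}, not merely through left and right multiplication. In the example above, $ab\approx aab$ yields $a\approx a^2$ by erasing $b$, whence $aab\approx ab$, and \emph{this} identity violates the $\lambda$-term property of $[aab]$ directly. A correct argument along the lines of \cite{Sapir-21} organises such deductions case by case; the $2$-island bound is what keeps that case analysis finite and tractable, not what makes the context $\mathbf p,\mathbf s$ cancellable.
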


\begin{lemma}[\mdseries{\cite[Corollary~6.4 and it's dual]{Sapir-21}}]
\label{L: S_alpha in V2} 
Let $\tau \in \{\lambda, \rho\}$ and $\mathtt W$ be a set of $2$-island-limited $\tau$-words.
Then a monoid variety $\mathbf V$ contains $M_\tau(\mathtt W)$ if and only if every $\tau$-word in $\mathtt W$ is a $\tau$-term for $\mathbf V$.\qed
\end{lemma}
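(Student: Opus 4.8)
The plan is to follow the template of Lemma~\ref{L: S_alpha in V1}, treating the case $\tau=\lambda$ and recovering $\tau=\rho$ by the left-right dual argument. The one feature of Lemma~\ref{L: S_alpha in V1} that fails outright for $\lambda$ and $\rho$ is that ``being a $\tau$-term'' need not be inherited by $\le_\tau$-smaller $\tau$-words; Fact~\ref{F:2-limited} repairs exactly this gap under the $2$-island-limited hypothesis, and that is precisely where the hypothesis will be spent.

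For the necessity direction ($\Rightarrow$), I would suppose $M_\tau(\mathtt W)\in\mathbf V$, fix $\mathtt u\in\mathtt W$ and a word $\mathbf u\in\mathtt u$, and assume $\mathbf V$ satisfies $\mathbf u\approx\mathbf v$. Evaluate both sides in $M_\tau(\mathtt W)$ under the substitution sending each $x\in\con(\mathbf u)$ to its $\tau$-class and every other letter to $0$. The left-hand side is the nonzero element $\mathtt u$, so the right-hand side is nonzero as well; since $0$ is absorbing this forces $\con(\mathbf v)\subseteq\con(\mathbf u)$, whence $\mathbf v$ evaluates to $[\mathbf v]_\tau=\mathtt u=[\mathbf u]_\tau$. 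As $\tau$ refines $\tau_1$ and hence preserves content, this yields $\mathbf u\mathrel\tau\mathbf v$, so $\mathtt u$ is a $\tau$-term for $\mathbf V$. This half uses neither Fact~\ref{F:2-limited} nor the island restriction, and is essentially identical to the corresponding half of Lemma~\ref{L: S_alpha in V1}.

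For the sufficiency direction ($\Leftarrow$), assume every $\mathtt u\in\mathtt W$ is a $\tau$-term for $\mathbf V$, and take an arbitrary identity $\mathbf p\approx\mathbf q$ of $\mathbf V$ together with a substitution $\phi\colon\mathfrak A^\ast\to M_\tau(\mathtt W)$; the goal is $\phi(\mathbf p)=\phi(\mathbf q)$. I would lift $\phi$ to a word substitution $\Phi\colon\mathfrak A^\ast\to\mathfrak A^\ast$, choosing a representative word for each nonzero value of $\phi$ and, for each letter sent to $0$, a word whose $\tau$-class lies outside $\mathtt W^{\le_\tau}$, so that $\phi(\mathbf p)$ and $\phi(\mathbf q)$ are the images of $[\Phi(\mathbf p)]_\tau$ and $[\Phi(\mathbf q)]_\tau$ in the Rees quotient, while $\mathbf V$ still satisfies $\Phi(\mathbf p)\approx\Phi(\mathbf q)$. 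If $\phi(\mathbf p)\ne0$, then $[\Phi(\mathbf p)]_\tau\le_\tau\mathtt w$ for some $\mathtt w\in\mathtt W$; since $\mathtt w$ is a $2$-island-limited $\tau$-term, Fact~\ref{F:2-limited} makes $[\Phi(\mathbf p)]_\tau$ a $\tau$-term for $\mathbf V$, and applying the definition of a $\tau$-term to $\Phi(\mathbf p)\approx\Phi(\mathbf q)$ gives $\Phi(\mathbf p)\mathrel\tau\Phi(\mathbf q)$, i.e.\ $\phi(\mathbf p)=\phi(\mathbf q)$. The symmetric argument covers $\phi(\mathbf q)\ne0$, and if both sides vanish they are trivially equal; hence $M_\tau(\mathtt W)$ satisfies $\mathbf p\approx\mathbf q$, so $M_\tau(\mathtt W)\in\mathbf V$.

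I expect the crux to be precisely the step ``$[\Phi(\mathbf p)]_\tau\le_\tau\mathtt w\in\mathtt W$ implies $[\Phi(\mathbf p)]_\tau$ is a $\tau$-term.'' For the trivial congruence and for $\tau_1,\gamma$ this inheritance is automatic, which is why Lemma~\ref{L: S_alpha in V1} requires no extra hypotheses; for $\lambda$ and $\rho$ it can genuinely fail, and the entire weight of the sufficiency argument rests on invoking Fact~\ref{F:2-limited}, whose availability is guaranteed only by the $2$-island-limited assumption on $\mathtt W$. The remaining points to verify are routine: the bookkeeping in the lift $\Phi$ (in particular realizing the absorbing $0$ by a word whose $\tau$-class falls in $(\mathfrak A^\ast/\tau)\setminus\mathtt W^{\le_\tau}$) and the content-preservation of $\tau$ used in the necessity direction.
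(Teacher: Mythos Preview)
The paper does not include its own proof of this lemma: the statement is quoted from \cite[Corollary~6.4]{Sapir-21} and closed with a \qed\ immediately, so there is nothing to compare against directly. Your proposal is correct and is exactly the argument one would expect behind that citation: it mirrors the proof strategy of Lemma~\ref{L: S_alpha in V1} (identity substitution for necessity, lifting a substitution to $\mathfrak A^\ast$ for sufficiency), with Fact~\ref{F:2-limited} inserted at the one point where $\lambda$ and $\rho$ differ from the coarser congruences, namely the downward inheritance of the $\tau$-term property along $\le_\tau$. That is precisely how \cite{Sapir-21} deduces its Corollary~6.4 from its Lemma~6.3, so your reconstruction matches the intended route.
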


Let $\mathfrak B$ denote an alphabet, which consists of symbols $a^+$ for each $a\in \mathfrak A$.
If $\tau\in\{\tau_1,\gamma, \lambda, \rho\}$, then it is convenient to represent the elements of $\mathfrak A^\ast /\tau$ by words in the alphabet $\mathfrak A\cup\mathfrak B$.
For each $a \in \mathfrak A$ consider the following rewriting rules on $(\mathfrak A \cup \mathfrak B)^\ast$:
\begin{itemize}
\item $R^{\tau_1}_{a\rightarrow a^+}$ replaces $a$ by $a^+$;
\item $R^\gamma_{a\rightarrow a^+}$ replaces an occurrence of $a$ in $\bf u$ by $a^+$ only in case that either $a$ appears in $\bf u$ at least twice or $a^+$ appears in $\bf u$;
\item $R^\lambda_{a\rightarrow a^+}$ replaces an occurrence of $a$ in $\bf u$ by $a^+$ only in case that either $a$ or $a^+$ appears in $\bf u$ to the left of this occurrence of $a$;
\item $R^\rho_{a\rightarrow a^+}$ replaces an occurrence of $a$ in $\bf u$ by $a^+$ only in case that either $a$ or $a^+$ appears in $\bf u$ to the right of this occurrence of $a$;
\item $R_{a^+a^+\rightarrow a^+}$ replaces $a^+a^+$ by $a^+$;
\item $R_{aa^+\rightarrow a^+}$ replaces $aa^+$ by $a^+$;
\item $R_{a^+a\rightarrow a^+}$ replaces $a^+a$ by $a^+$.
\end{itemize}
It is proved in Lemma~8.1 in \cite{Sapir-21} that if $\tau\in\{\tau_1,\gamma, \lambda, \rho\}$, then using the rules 
\begin{equation}
\label{rewriting rules}
\{R^{\tau}_{a\rightarrow a^+}, R_{a^+a^+\rightarrow a^+}, R_{aa^+\rightarrow a^+},R_{a^+a\rightarrow a^+}\mid a \in \mathfrak A\}
\end{equation}
in any order, every word  ${\bf u} \in (\mathfrak A \cup \mathfrak B)^\ast$ can be transformed to a unique word $r_{\tau}({\bf u})$ such that none of these rules is applicable to  $r_{\tau}({\bf u})$.
Let $\mathcal R_{\tau}$ denote the set of all words in $(\mathfrak A \cup \mathfrak B)^\ast$ to which none of the rewriting rules~\eqref{rewriting rules} is applicable.
It is verified in Proposition~8.2 in~\cite{Sapir-21} that the $\tau$-words in $\mathfrak A^\ast /\tau$  can be identified with the elements of $\mathcal R_{\tau}$ such that for each pair of words $\mathbf u,\mathbf v\in \mathcal R_{\tau}$ we have
$$
\mathbf u\,{\circ{_{\tau}}}{\mathbf v} = r_{\tau}(\mathbf{uv}).
$$ 
For example, $(bta^+) \circ_\lambda (a^+b^+) = bta^+b^+$.
The next example lists all non-zero elements of the 20-element monoid $M_\lambda(bta^{+}b^{+})$.

\begin{example} 
\label{E:K}
\[
\begin{aligned}
\{bta^{+}b^{+}&\}^{\le_\lambda} = \{1,  a, a^+, b, b^+, t,\\
bt, ta, t&a^+, ab, ab^+, a^+b, a^+b^+,\\
bta&, bta^+,ta^+b, ta^+b^+,\\
&bta^+b,  bta^+b^+\}.
\end{aligned}
\]
\end{example}

If $\mathtt w_1, \mathtt w_2,\dots,\mathtt w_k \in \mathfrak A^\ast/\tau$, then we use $\mathbb M_\tau(\mathtt w_1, \mathtt w_2,\dots,\mathtt w_k)$ to denote the monoid variety generated by $M_\tau(\mathtt w_1, \mathtt w_2,\dots,\mathtt w_k)$.
If $\tau$ is the trivial congruence on $\mathfrak A^\ast$, then we write $\mathbb M(\mathtt w_1, \mathtt w_2,\dots,\mathtt w_k)$ rather than $\mathbb M_\tau(\mathtt w_1, \mathtt w_2,\dots,\mathtt w_k)$.

It turns out that every limit variety of $\mathscr J$-trivial monoids is generated by a monoid of the form $M_\tau({\mathtt W})$.
In particular,  ${\mathbf L} = \mathbb M(atbasb)$ and ${\mathbf M} = \mathbb M(abtasb, atbsab)$~\cite{Jackson-05}.
The limit variety $\mathbf J$ was introduced in~\cite{Gusev-SF} as a variety given by some infinite identity system.
According to Theorem~7.2(v) in~\cite{Sapir-21} and its dual, ${\mathbf J} = \mathbb M_\lambda(atba^{+}sb^{+})$ and $\overleftarrow{\mathbf J} = \mathbb M_\rho(a^+tb^+asb)$. 
In view of Theorem~4.3(iv) in~\cite{Sapir-21},  the limit variety $\mathbf A^1\vee\overleftarrow{\mathbf A^1}$ discovered in~\cite{Zhang-Luo} is generated by the monoid $M_{\gamma}(a^+b^+ta^+, a^+tb^+a^+)$.{\sloppy

}

Put ${\mathbf K} = \mathbb M_\lambda(bta^{+}b^{+})$.
According to Example~\ref{E:K}, the variety ${\mathbf K}$ is generated by a 20-element monoid. 
In fact, $\mathbf K$ is generated by a 12-element submonoid of $M_\lambda(bta^{+}b^{+})$ (see Remark~\ref{R: monoid in K}).
We are going to prove in Proposition~\ref{P: K is a limit variety} that ${\mathbf K}$ and $\overleftarrow{\mathbf K}$ are new limit varieties of monoids.

\subsection{Two useful facts}

If $\mathcal M$ is a monoid or a class of monoids and $\Sigma$ is an identity or a set of identities, then we write $\mathcal M \models \Sigma$ whenever $\mathcal M$ satisfies $\Sigma$. 
A word~$\mathbf w$ is an \textit{isoterm} for $\mathcal M$ if $\mathcal M$ violates any non-trivial identity of the form $\mathbf w \approx \mathbf w'$. 
For a word $\mathbf w$ and a set of letters $X \subseteq \con(\mathbf w)$, let $\mathbf w(X)$ be the word obtained from $\mathbf w$ by deleting from $\mathbf w$ all letters that are not in $X$. 
We write $\mathbf w(x_1,x_2,\dots,x_k,X)$ rather than $\mathbf w(X\cup\{x_1,x_2,\dots,x_k\})$ whenever $X\subseteq\con(\mathbf w)$ and $x_1,x_2,\dots,x_k\in\con(\mathbf w)\setminus X$.

The next two facts are well-known. 
We provide their proofs for the sake of completeness.

\begin{lemma}
\label{L: xy isoterm}
For a monoid $M$ the following are equivalent:
\begin{itemize}
\item[\textup{(i)}]$xy$ is an isoterm for $M$;
\item[\textup{(ii)}]every identity $\mathbf u \approx \mathbf v$ satisfied by $M$ has the following properties:  
$$
\mul(\mathbf u)=\mul(\mathbf v)\ \text{ and }\ \,\mathbf u(\simp(\mathbf u)) = \mathbf v(\simp(\mathbf v)).
$$
\end{itemize} 
\end{lemma}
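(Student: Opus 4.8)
The plan is to prove the equivalence by establishing both implications, using the characterization of identities in terms of how simple and multiple letters behave. The key observation is that $xy$ being an isoterm means the two-letter word with distinct simple letters cannot be rewritten nontrivially, and I expect the substitution method to be the main tool throughout.

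First I would prove the direction (ii)$\Rightarrow$(i), which is the easier one. If every identity $\mathbf u\approx\mathbf v$ satisfied by $M$ forces $\mul(\mathbf u)=\mul(\mathbf v)$ and $\mathbf u(\simp(\mathbf u))=\mathbf v(\simp(\mathbf v))$, then I apply this to any identity of the form $xy\approx\mathbf w'$. Since both $x$ and $y$ are simple in $xy$, we have $\simp(xy)=\{x,y\}$ and $\mul(xy)=\varnothing$, so $\mathbf w'$ must also have empty multiple-letter set and $\mathbf w'(\simp(\mathbf w'))=\mathbf w'=xy$. Hence the only identity of this form that $M$ can satisfy is trivial, which is precisely the statement that $xy$ is an isoterm.

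The harder direction is (i)$\Rightarrow$(ii), and this is where I expect the main obstacle. Suppose $xy$ is an isoterm for $M$ and let $\mathbf u\approx\mathbf v$ be any identity satisfied by $M$. I would argue by substitution: to show $\mul(\mathbf u)=\mul(\mathbf v)$, fix a letter $t$ and substitute $t\mapsto x$ while sending every other letter of $\mathbf u$ and $\mathbf v$ to $y$ (using that $M$ is a monoid, so deletions can be realized by mapping letters to $1$ when convenient). The key point is that such substitutions transform the identity $\mathbf u\approx\mathbf v$ into a consequence that $M$ must still satisfy, and if $t$ were simple in $\mathbf u$ but multiple in $\mathbf v$ (or absent from one side), the resulting word on one side would contain a repeated factor while the other would reduce to $xy$, contradicting that $xy$ is an isoterm. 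A cleaner route is to first establish $\con(\mathbf u)=\con(\mathbf v)$ by mapping one letter to $x$ and all others to $1$, then refine to the multiplicity statement by mapping the distinguished letter to $x^2$ or by tracking occurrences. The delicate part is choosing substitutions that separate a single distinguished letter's behavior from the rest without accidentally creating the forbidden pattern on both sides simultaneously.

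Finally, for the equality $\mathbf u(\simp(\mathbf u))=\mathbf v(\simp(\mathbf v))$, once I know $\simp(\mathbf u)=\simp(\mathbf v)$ (which follows from $\con(\mathbf u)=\con(\mathbf v)$ together with $\mul(\mathbf u)=\mul(\mathbf v)$), I would show the simple letters appear in the same relative order on both sides. For this I take any two simple letters $x,y$ occurring in $\mathbf u$ in that order, substitute these two letters by $x$ and $y$ and send all remaining letters to $1$; then $\mathbf u$ reduces to $xy$ and $\mathbf v$ reduces to the word recording the order of $x,y$ in $\mathbf v$. Since $xy$ is an isoterm, $\mathbf v$ must reduce to $xy$ as well, forcing $x$ before $y$ in $\mathbf v$. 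Applying this to every pair of simple letters shows the order is preserved, giving $\mathbf u(\simp(\mathbf u))=\mathbf v(\simp(\mathbf v))$ and completing the proof.
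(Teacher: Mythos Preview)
Your proposal is correct and follows essentially the same approach as the paper. The paper's proof is terser: it observes that $x$ is an isoterm (immediate from $xy$ being one), which already gives $\mul(\mathbf u)=\mul(\mathbf v)$ and $\simp(\mathbf u)=\simp(\mathbf v)$, and then handles the ordering of simple letters by the same pairwise-deletion argument you describe; your version simply unpacks the ``$x$ is an isoterm'' step into explicit substitution arguments.
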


\begin{proof} 
(i) $\rightarrow$ (ii) Clearly, $x$ is an isoterm for $M$. 
Then $\mul(\mathbf u)=\mul(\mathbf v)$ and $\simp(\mathbf u)=\simp(\mathbf v) = \{t_1, \dots, t_m\}$ for some $m \ge 0$. 
Therefore, $\mathbf u(\simp(\mathbf u)) = t_1 \dots t_m$ and  $\mathbf v(\simp(\mathbf u)) = t_{\pi1} \dots t_{\pi m}$ for some permutation $\pi$ on $\{t_1, \dots, t_m\}$. 
If $\pi$ is not the identical permutation, then $M$ satisfies $t_it_j \approx t_jt_i$ for some $1 \le i < j \le m$. 
To avoid contradiction, $\pi$ is must be the identical permutation, that is, $\mathbf u(\simp(\mathbf u)) = \mathbf v(\simp(\mathbf v))$.
Implication~(ii) $\rightarrow$ (i) is evident.
\end{proof} 

A \textit{block} of a word $\mathbf w$ is a maximal subword of $\mathbf w$ that does not contain any letters simple in $\mathbf w$. 
If $xy$ is an isoterm for a monoid $M$, then, in view of Lemma~\ref{L: xy isoterm}, every identity of $M$ is of the form
\begin{equation}
\label{..t_iu_i..=..t_iv_i..}
\mathbf u_0 \prod_{i=1}^m (t_i\mathbf u_i) \approx \mathbf v_0 \prod_{i=1}^m (t_i\mathbf v_i),
\end{equation}
where $\simp(\mathbf u)=\simp(\mathbf v) = \{t_1, \dots, t_m\}$ for some $m \ge 0$ and
$\mul(\mathbf u) = \con(\mathbf u_0  \dots \mathbf u_m)  = \con(\mathbf v_0  \dots \mathbf v_m) =\mul(\mathbf v)$.
For each $i=0,1,\dots, m$, we say the blocks $\mathbf u_i$ and $\mathbf v_i$ are \textit{corresponding}.{\sloppy

}
\begin{lemma}
\label{L: xy is not an isoterm}
Let $M$ be a monoid that satisfies $x^n \approx x^{n+1}$ for some $n\ge1$. If $xy$ is not an isoterm for $M$, then $M$ is commutative or idempotent.\qed
\end{lemma}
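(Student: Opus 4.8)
The plan is to extract from the hypothesis a single nontrivial identity $xy \approx \mathbf w'$ satisfied by $M$ and then, by a short sequence of letter substitutions, to force either $x \approx x^2$ (idempotency) or $xy \approx yx$ (commutativity). Throughout I use that every identity satisfied by $M$ is preserved under substituting its letters by arbitrary words or by the empty word $1$.

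First I would record an auxiliary observation: if $M \models x \approx x^p$ for some $p \ge 2$, then $M$ is idempotent. Indeed, iterating $x \approx x^p$ gives $x \approx x^{1+k(p-1)}$ for every $k \ge 0$ (the inductive step replaces $x$ by $x^{1+k(p-1)}$ inside $x^p \approx x^{p-1}\cdot x$); choosing $k$ with $1 + k(p-1) \ge n$ and using $x^n \approx x^{n+1}$, so that all powers of $x$ of exponent at least $n$ coincide, yields $x \approx x^n$, whence $x^2 \approx x\cdot x^n \approx x^{n+1} \approx x^n \approx x$.

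Next, since $xy$ is not an isoterm, I fix a word $\mathbf w' \ne xy$ with $M \models xy \approx \mathbf w'$. Substituting $y \mapsto 1$ and every letter other than $x$ by $1$ produces $x \approx x^p$, where $p$ is the number of occurrences of $x$ in $\mathbf w'$. If $p = 0$ then $M \models x \approx 1$, so $M$ is trivial and hence both commutative and idempotent; if $p \ge 2$ then $M$ is idempotent by the auxiliary observation. The symmetric substitution $x \mapsto 1$ treats the number $q$ of occurrences of $y$ in the same way. Hence I may assume that $x$ and $y$ each occur exactly once in $\mathbf w'$.

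Finally, deleting from $\mathbf w'$ all letters other than $x$ and $y$ leaves a word with a single $x$ and a single $y$, that is, $M \models xy \approx \mathbf w''$ with $\mathbf w'' \in \{xy, yx\}$. If $\mathbf w'' = yx$, then $M$ is commutative and we are done. If $\mathbf w'' = xy$, then $\mathbf w' \ne xy$ forces $\mathbf w'$ to contain an occurrence of some letter $z \notin \{x,y\}$; substituting $z \mapsto x$ and every remaining letter other than $x$ and $y$ by $1$ gives an identity $xy \approx \tilde{\mathbf w}$ in which $x$ occurs at least twice, and the further substitution $y \mapsto 1$ returns to the case $p \ge 2$, making $M$ idempotent. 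The only genuinely delicate point, and the reason the extra-letter bookkeeping is needed, is exactly this last case, where $\mathbf w'$ differs from $xy$ only through letters outside $\{x, y\}$: the substitution $z \mapsto x$ is what converts such a difference into a repetition of $x$, and thereby into idempotency.
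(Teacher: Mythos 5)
Your proposal is correct and takes essentially the same route as the paper: reduce the witnessing identity $xy \approx \mathbf w'$ by letter substitutions either to $x \approx x^p$ with $p \ne 1$, which combined with $x^n \approx x^{n+1}$ yields $x \approx x^2$ and hence idempotency, or to $xy \approx yx$ and hence commutativity. The paper compresses this into the dichotomy ``$x$ is or is not an isoterm for $M$'' and treats the second case as evident; your extra-letter and occurrence-count bookkeeping is exactly the detail being suppressed there.
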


\begin{proof}  
If $x$ is not an isoterm for $M$, then $M$ satisfies $x \approx x^k$ for some $k>1$. 
Since the identities $x^n \approx x^{n+1}$ and $x \approx x^k$ imply $x \approx x^2$, the monoid $M$ is idempotent. 

If $x$ is  an isoterm for $M$ but $xy$ is not an isoterm for $M$, then $M$ satisfies $xy \approx yx$ and is commutative.
\end{proof}

\section{A sufficient condition under which a monoid variety is HFB}
\label{suf cond}

A monoid $M$ is said to be [\textit{hereditary}] \textit{finitely based} if $\var M$ is [H]FB. 
The variety of monoids given by an identity system $\Delta$ is denoted by $\var\,\Delta$. We fix the following set of identities for the rest of this section:
\[
\Sigma = \{xtyxsy \approx xtxyxsy,\,xtysyx \approx xtysxyx\}.
\]
The following proposition is the main result of this section and generalizes Proposition~3.1 in~\cite{Gusev-JAA}, which says that $\var \{xtyxsy \approx xtxyxsy,\, x^2y^2 \approx y^2x^2\}$ is HFB.

\begin{proposition} 
\label{P: hfb}
Let $M$ be a monoid such that $M \models \Sigma$. 
Then $M$ is HFB.
\end{proposition}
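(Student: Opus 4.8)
The plan is to reduce the statement to a claim about subvarieties of the single variety $\mathbf V := \var\Sigma$ and then to prove that claim by normalizing words modulo $\Sigma$. Since $\var M \subseteq \mathbf V$, every subvariety of $\var M$ is a subvariety of $\mathbf V$; hence it suffices to show that \emph{every} subvariety $\mathbf U \subseteq \mathbf V$ is FB. As a warm-up I would record the basic arithmetic of $\mathbf V$. Substituting $t,s \mapsto 1$ and $y \mapsto x$ into the first identity $xtyxsy \approx xtxyxsy$ of $\Sigma$ gives $x^4 \approx x^5$, so $\mathbf V$ is aperiodic and satisfies $x^n \approx x^{n+1}$ with $n = 4$. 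Exhibiting a non-commutative, non-idempotent monoid satisfying $\Sigma$ (a suitable Rees quotient $M_\tau(\mathtt W)$ will do) and invoking Lemma~\ref{L: xy is not an isoterm} shows that $xy$ is an isoterm for $\mathbf V$. By Lemma~\ref{L: xy isoterm} every identity holding in $\mathbf V$, and hence in any subvariety $\mathbf U$, then has the block form~\eqref{..t_iu_i..=..t_iv_i..}: the simple letters and their left-to-right order are preserved, and corresponding blocks have equal content.

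The heart of the argument is a normalization of words modulo $\Sigma$. I would first read the two identities of $\Sigma$ as one-way rewriting rules and extract from them a toolkit of derived identities: the first identity governs the \emph{left} structure of a block (it relates the position of the first two occurrences of a multiple letter to the first occurrences of the letters that follow it), while the second identity $xtysyx \approx xtysxyx$, being its right-sided counterpart, governs the \emph{last} two occurrences. Applying these rules in both directions I expect every word to be $\Sigma$-equivalent to a canonical word that is $2$-island-limited, in which each multiple letter occurs only near the left and right ends of its block and all genuinely interior occurrences have been absorbed. In this way the $\mathbf V$-class of a word is encoded by its image in $\mathfrak A^\ast/\lambda$ together with its image in $\mathfrak A^\ast/\rho$, for which the machinery of Fact~\ref{F:2-limited} and Lemma~\ref{L: S_alpha in V2} becomes available.

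With canonical forms in hand, I would prove that an arbitrary subvariety $\mathbf U \subseteq \mathbf V$ is FB by a well-quasi-order argument. The set of canonical ($2$-island-limited) words, ordered by the deletion/divisibility relation inherited from $\le_\lambda$ and $\le_\rho$, is well-quasi-ordered by a Higman-type argument; consequently an identity $\mathbf u \approx \mathbf v$ of $\mathbf U$ is determined, modulo $\Sigma$, by a bounded amount of combinatorial data, so that only finitely many identities of $\mathbf U$ are minimal with respect to $\Sigma$-deducibility. Taking $\Sigma$ together with these finitely many identities (equivalently, the identities of $\mathbf U$ of bounded length) yields a finite basis for $\mathbf U$, completing the proof that $\mathbf V$, and therefore $M$, is HFB.

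The main obstacle is the normalization step: one must show that the two rather rigid identities of $\Sigma$ actually suffice to delete \emph{every} interior occurrence of a multiple letter and to reconcile the independently obtained left- and right-normal forms inside a single block, where the letters being inserted or deleted interact with neighbouring simple letters and with other multiple letters. Controlling this interaction — essentially a confluence/deducibility lemma asserting that every block identity of $\mathbf U$ follows from $\Sigma$ together with identities of bounded length — is where the bulk of the combinatorial work lies; the subsequent well-quasi-order bookkeeping is then routine.
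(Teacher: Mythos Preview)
Your normalization step is where the argument breaks. Neither identity in $\Sigma$ moves an occurrence of a multiple letter across a simple letter: in $xtyxsy\approx xtxyxsy$ the letter $x$ occupies blocks $0,1$ on both sides, and in $xtysyx\approx xtysxyx$ it occupies blocks $0,2$ on both sides. Hence the \emph{block profile} of a word (for each multiple letter, the set of blocks in which it occurs) is a $\Sigma$-invariant. A word such as $x\,t_1\,x\,t_2\,\cdots\,t_m\,x$ already has $m{+}1$ islands of $x$, each in its own block, and no application of $\Sigma$ can merge them; so ``every word is $\Sigma$-equivalent to a $2$-island-limited word'' is simply false, and the proposed encoding of $\Sigma$-classes by their $\lambda$- and $\rho$-images collapses. (Relatedly, your claim that identities of an arbitrary subvariety $\mathbf U\subseteq\var\Sigma$ inherit the block form~\eqref{..t_iu_i..=..t_iv_i..} is wrong for small $\mathbf U$: if $xy$ is not an isoterm for $\mathbf U$ you must treat the commutative/idempotent case separately, as in Lemma~\ref{L: xy is not an isoterm}.)

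The paper gets around exactly this obstruction not by a global normal form but by splitting the identities of $M$ into two finitely controllable pieces. First, the almost-linear identities $\Gamma(M)$ absorb every discrepancy in block profile: Lemma~\ref{L: compact} shows that, modulo $\Sigma\cup\Gamma(M)$, any identity of $M$ is equivalent to a \emph{compact well-balanced} one, and by Volkov's theorem (\cite[Corollary~2]{Volkov-90}) the variety defined by $\Gamma(M)$ is always finitely based. Second, for compact well-balanced identities one runs the critical-pair method (Lemma~\ref{L: fblemma}): a case analysis shows every critical pair is removable using $\Sigma$ together with a fixed finite stock $\Phi$ of two-multiple-letter template identities, so only the finite subset $\Phi(M)\subseteq\Phi$ is needed. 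Your WQO/Higman idea, even if the normalization had worked, would still have to explain why well-quasi-ordering of canonical forms yields finitely many $\Sigma$-underivable identities of $\mathbf U$; the paper sidesteps that by making the finiteness explicit via $\Gamma(M)$ and $\Phi(M)$.
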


To prove Proposition~\ref{P: hfb} we need some auxiliary results. 
We will often use the following fact without references.

\begin{fact} 
\label{F: id}
The identity  
\begin{equation}
\label{xtysyx=xtysxyx} 
xtysyx \approx xtysxyx 
\end{equation}
implies the identities $xtx \approx xtx^2$ and
\begin{equation}
\label{xtysxy=xtysyxy}
xtysxy\approx xtysyxy.
\end{equation}
\end{fact}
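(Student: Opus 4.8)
The plan is to derive both identities from \eqref{xtysyx=xtysxyx} purely by substitution and by applying \eqref{xtysyx=xtysxyx} inside fixed contexts; the first identity comes out at once, whereas the second needs a short detour through longer words.

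First I would prove $xtx\approx xtx^2$ by substituting $y\mapsto 1$ and $s\mapsto 1$ in \eqref{xtysyx=xtysxyx}: the left-hand side $xtysyx$ then collapses to $xtx$ and the right-hand side $xtysxyx$ collapses to $xtxx=xtx^2$. (Setting in addition $t\mapsto 1$ would give $x^2\approx x^3$, so the monoid is aperiodic, although this will not be needed below.)

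For \eqref{xtysxy=xtysyxy} a direct one-step rewriting of $xtysxy$ into $xtysyxy$ is not available: in order to apply \eqref{xtysyx=xtysxyx} one needs a repeated letter sitting inside a suitable factor, and $xtysxy$ lacks such a configuration. The idea is therefore to manufacture the repetition first. Concretely, I would establish the chain
\[
xtysxy\ \approx\ xtysxxy\ \approx\ xtysxyxy\ \approx\ xtysyxy .
\]
The first step is obtained by multiplying the instance $xtysx\approx xtysxx$ of $xtx\approx xtx^2$ (reading $x$ for $x$ and $tys$ for $t$) on the right by $y$. The second step applies \eqref{xtysyx=xtysxyx} to the factor $ysxxy$ of $xt\cdot ysxxy$: this factor is exactly the image of the left-hand side $xtysyx$ of \eqref{xtysyx=xtysxyx} under the substitution $x\mapsto y,\ t\mapsto s,\ y\mapsto x,\ s\mapsto 1$, whose corresponding right-hand side is $ysxyxy$, whence $xtysxxy\approx xtysxyxy$. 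The third step reads $xtysxyxy$ as $xtysxyx\cdot y$ and applies \eqref{xtysyx=xtysxyx} (with the identical substitution) to the prefix $xtysxyx$, rewriting it back to $xtysyx$ and producing $xtysyxy$.

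The step I expect to be the crux is the middle one: one has to notice that duplicating $x$ via $xtx\approx xtx^2$ creates the factor $ysxxy$, which is precisely a (degenerate, with $s$ sent to the empty word) instance of the left-hand side of \eqref{xtysyx=xtysxyx}, so that a single application of \eqref{xtysyx=xtysxyx} performs the required interchange and supplies the extra occurrence of $y$. Once the chain is in hand, each individual step is a routine substitution followed either by right multiplication by $y$ or by an application inside a fixed context, and verifying that the content and the orders of first and of last occurrences of the letters are respected confirms the computation.
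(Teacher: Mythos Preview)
Your proof is correct and follows the same route as the paper: first erase $y$ and $s$ from \eqref{xtysyx=xtysxyx} to obtain $xtx\approx xtx^2$, then establish the three-step chain $xtysxy\approx xtysx^2y\approx xtys(xy)^2\approx xtysyxy$ using one application of $xtx\approx xtx^2$ followed by two applications of \eqref{xtysyx=xtysxyx}. Your exposition simply makes explicit the substitutions (notably $x\mapsto y,\ t\mapsto s,\ y\mapsto x,\ s\mapsto 1$ in the middle step) that the paper leaves implicit.
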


\begin{proof}  
The identity $xtx \approx xtx^2$ can be obtained by erasing letters $s$ and $y$ from~\eqref{xtysyx=xtysxyx}. Then~\eqref{xtysxy=xtysyxy} follows from~\eqref{xtysyx=xtysxyx} because $xtysxy  \stackrel{xtx \approx xtx^2}{\approx} xtysx^2y  \stackrel{\eqref{xtysyx=xtysxyx}}{\approx}  xtys(xy)^2  \stackrel{\eqref{xtysyx=xtysxyx}}{\approx} xtysyxy$.
\end{proof}

The following lemma generalizes Lemma~3.2 in~\cite{Gusev-JAA}.

\begin{lemma} 
\label{L: insert}
Let $\mathbf w=\mathbf v_1\mathbf v_2x\mathbf v_3$, where $\mathbf v_1, \mathbf v_2,\mathbf v_3\in\mathfrak A^\ast$. 
If ${\bf v}_1$ contains $x$ and ${\bf v}_2$ does not contain any letters that are simple in $\bf w$, then $\Sigma$ implies ${\bf w}\approx {\bf v}_1 x {\bf v}_2 x {\bf v}_3$.
\end{lemma}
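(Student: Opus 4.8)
The plan is to prove the statement by induction on $|\mathbf v_2|$, after isolating a single-insertion step as the engine of the argument. Throughout I would use, without further comment, that $\Sigma$ implies $xtx\approx xtx^2$ (Fact~\ref{F: id}) and, specialising $t$ to the empty word in it, the power identity $x^2\approx x^3$; substituting an arbitrary word for $x$ then gives $\mathbf w^2\approx\mathbf w^3$ for every $\mathbf w$. For the base case $\mathbf v_2=1$ I would write $\mathbf v_1=\mathbf a x\mathbf b$, possible since $x\in\con(\mathbf v_1)$, and apply $xtx\approx xtx^2$ with $t\mapsto\mathbf b$ to get $\mathbf a x\mathbf b x\mathbf v_3\approx\mathbf a x\mathbf b x^2\mathbf v_3$, that is, $\mathbf v_1 x\mathbf v_3\approx\mathbf v_1 x x\mathbf v_3$, as required.

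For the inductive step I would write $\mathbf v_2=\mathbf v_2'z$ with $z$ its last letter; since $\mathbf v_2$ has no simple letter, $z$ is multiple in $\mathbf w$. Everything then rests on the auxiliary claim
\[
(\star)\qquad \mathbf p\ni x \text{ and } z \text{ multiple in } \mathbf p z x\mathbf q \ \Longrightarrow\ \Sigma\vdash \mathbf p z x\mathbf q\approx\mathbf p x z x\mathbf q ,
\]
i.e. one may plant a copy of $x$ immediately before the distinguished $z$. Granting $(\star)$, the step reads
\[
\mathbf v_1\mathbf v_2'zx\mathbf v_3\ \overset{(\star)}{\approx}\ \mathbf v_1\mathbf v_2'xzx\mathbf v_3\ \overset{\mathrm{IH}}{\approx}\ \mathbf v_1 x\mathbf v_2'xzx\mathbf v_3\ \overset{(\star)}{\approx}\ \mathbf v_1 x\mathbf v_2'zx\mathbf v_3=\mathbf v_1 x\mathbf v_2 x\mathbf v_3 ,
\]
where the outer steps are $(\star)$ forwards and backwards (the backward use being legitimate because $\mathbf v_1 x\mathbf v_2'$ still contains $x$ and $z$ is still multiple), and the middle step is the induction hypothesis applied to the shorter block $\mathbf v_2'$ with suffix $zx\mathbf v_3$; note that every letter of $\mathbf v_2'$ remains multiple after inserting copies of $x$, so the hypotheses persist.

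It remains to prove $(\star)$, and this is where I expect the real difficulty to lie. Since $z$ is multiple it reoccurs either in $\mathbf q$ (to the right) or only in $\mathbf p$ (to the left). If $z\in\con(\mathbf q)$, I would write $\mathbf p=\mathbf p_1x\mathbf p_2$ and $\mathbf q=\mathbf q_1 z\mathbf q_2$ with $z\notin\con(\mathbf q_1)$; then the factor $x\mathbf p_2 z x\mathbf q_1 z$ matches the left-hand side $xtyxsy$ of the first identity of $\Sigma$ under $t\mapsto\mathbf p_2$, $y\mapsto z$, $s\mapsto\mathbf q_1$, and a single application turns $\mathbf p z x\mathbf q$ into $\mathbf p x z x\mathbf q$. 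If $z\notin\con(\mathbf q)$, I would instead invoke the second identity $xtysyx\approx xtysxyx$ with $y\mapsto z$, using an occurrence of $z$ in $\mathbf p$ as its first $y$, the junction $z$ as its second $y$, and the distinguished $x$ as its final anchor; this succeeds as soon as some occurrence of $z$ in $\mathbf p$ is preceded there by an $x$. The genuinely delicate configuration is the degenerate one in which $z$ is absent from $\mathbf q$ and every occurrence of $z$ in $\mathbf p$ precedes the first $x$ of $\mathbf p$ (the smallest instance being $\mathbf p z x\mathbf q=zxzx$): here neither identity of $\Sigma$ applies directly, and I would manufacture the missing right-hand echo of $z$ by means of $\mathbf w^2\approx\mathbf w^3$, duplicating a suitable factor so as to create a second copy of $z$ on the correct side, apply the relevant identity of $\Sigma$ to the enlarged word, and then contract the auxiliary factor away. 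Verifying that this bootstrap is always available and that the ensuing contractions return exactly $\mathbf p x z x\mathbf q$ is the step I expect to demand the most care.
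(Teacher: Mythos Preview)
Your induction on $|\mathbf v_2|$ and the reduction to the single-letter step $(\star)$ are exactly the scheme the paper (via \cite{Gusev-JAA}) uses, and your treatment of Case~1 ($z\in\con(\mathbf q)$, first identity of $\Sigma$) and Case~2 (an $x$ precedes some $z$ in $\mathbf p$, second identity of $\Sigma$) is correct.

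The gap is your ``degenerate'' Case~3. The bootstrap via $\mathbf w^2\approx\mathbf w^3$ is not needed, and as you suspected it is awkward to carry out in full generality: after expanding a square to a cube and applying an identity of $\Sigma$, the contraction back to the target word is not automatic. The paper's point is precisely that this case is handled in one line by the \emph{derived} identity~\eqref{xtysxy=xtysyxy}, namely $xtysxy\approx xtysyxy$, obtained from $\Sigma$ in Fact~\ref{F: id}. Swap the roles of $x$ and $z$: the instance with $x\mapsto z$, $y\mapsto x$ reads
\[
z\,t\,x\,s\,z\,x \;\approx\; z\,t\,x\,s\,x\,z\,x .
\]
In Case~3 every $z$ in $\mathbf p$ lies before every $x$ in $\mathbf p$, so $\mathbf p=\mathbf a z\mathbf b x\mathbf c$ for suitable $\mathbf a,\mathbf b,\mathbf c$; taking $t=\mathbf b$ and $s=\mathbf c$ and applying the displayed identity to the factor $z\mathbf b x\mathbf c zx$ of $\mathbf p zx\mathbf q$ yields $\mathbf p zx\mathbf q\approx\mathbf p xzx\mathbf q$ immediately. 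In your minimal example $zxzx$, this is just $zxzx\approx zxxzx$ with $t=s=1$. Thus the ``most care'' you anticipated disappears once you remember that Fact~\ref{F: id} already put~\eqref{xtysxy=xtysyxy} at your disposal; this is exactly the substitution the paper alludes to when it says one replaces $xtysxy\approx xtysyx$ from \cite{Gusev-JAA} by one of~\eqref{xtysyx=xtysxyx} or~\eqref{xtysxy=xtysyxy}.
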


\begin{proof} 
In view of Fact~\ref{F: id}, the identity~\eqref{xtysyx=xtysxyx} implies~\eqref{xtysxy=xtysyxy}. 
The rest of the proof is similar to the proof of Lemma~3.2 in~\cite{Gusev-JAA}.
The only difference is that instead of~$xtysxy \approx xtysyx$ we use one of the identities~\eqref{xtysyx=xtysxyx} or~\eqref{xtysxy=xtysyxy}.
\end{proof}

Let ${\bf u} \approx {\bf v}$ be an identity such that ${\bf u}(\simp({\bf u})) = {\bf v}(\simp({\bf v}))$. 
If $x \in \mul({\bf u})$, we say that ${\bf u} \approx {\bf v}$ is \textit{$x$-well-balanced} if ${\bf u}(x, \simp({\bf u})) = {\bf v}(x, \simp({\bf u}))$. 
Following Lee~\cite{Lee-12}, we say that the identity is \textit{well-balanced} if ${\bf u} \approx {\bf v}$ is $x$-well-balanced for each $x \in \mul({\bf u})$. 
The identity ${\bf u} \approx {\bf v}$ is \textit{not well-balanced at} $x$ if the number of occurrences of $x$ in some block $\mathbf b$ of $\mathbf u$ does not equal to the number of occurrences of $x$ in the corresponding block $\mathbf b'$ of $\mathbf v$. 
If $x \in \mul({\bf u})$ and $\bf b$ is the block of $\bf u$, which contains the first occurrence of $x$, then we say that $\bf u$ is $x$-\textit{compact} if $\bf b$ contains at most two occurrences of $x$ and every other block of $\bf u$ contains at most one occurrence of $x$. 
We say that a word $\bf u$ is \textit{compact} if $\bf u$ is $x$-compact for each $x \in \mul({\bf u})$. 
We say that an identity ${\bf u} \approx {\bf v}$ is \textit{compact} if both $\bf u$ and $\bf v$ are compact and ${\bf u}(\simp({\bf u})) = {\bf v}(\simp({\bf v}))$. 
A word that contains at most one multiple letter is called \textit{almost-linear}. 
An identity ${\bf u} \approx {\bf v}$ is called \textit{almost-linear} if both $\bf u$ and $\bf v$ are almost-linear. 
Given a monoid $M$ we use $\Gamma(M)$ to denote the set of all almost-linear identities of $M$.

\begin{lemma} 
\label{L: compact} 
Let $M$ be a monoid such that $xy$ is an isoterm for $M$ and $M \models \Sigma$. 
Then every identity of $M$ can be derived from a compact well-balanced identity of $M$ together with $\Sigma \cup \Gamma(M)$.
\end{lemma}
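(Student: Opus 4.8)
The plan is to process an arbitrary identity $\mathbf u\approx\mathbf v$ of $M$ in two stages: first use $\Sigma$ to replace each side by a compact word, and then use the almost-linear identities in $\Gamma(M)$ to pass from the resulting compact identity to a well-balanced one. Since $xy$ is an isoterm for $M$, Lemma~\ref{L: xy isoterm} guarantees that every identity of $M$ already satisfies $\mul(\mathbf u)=\mul(\mathbf v)$ and $\mathbf u(\simp(\mathbf u))=\mathbf v(\simp(\mathbf v))$. Moreover, all the rewritings I use preserve both the multiple content and the projection onto the simple letters, so the condition $\mathbf u(\simp(\mathbf u))=\mathbf v(\simp(\mathbf v))$ is maintained automatically and at every moment I only have to control the distribution of each multiple letter among the blocks.

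\emph{First stage (compactification).} Erasing $t$ from the consequence $xtx\approx xtx^2$ of Fact~\ref{F: id}, the monoid $M$ satisfies $x^2\approx x^3$, so $\Sigma\vdash x^k\approx x^2$ for all $k\ge2$; together with $xtx\approx xtx^2$ and Lemma~\ref{L: insert} these are the only tools needed here. For a multiple letter $x$ of a word $\mathbf w$, I would use the backward direction of Lemma~\ref{L: insert} and of its dual to delete every occurrence of $x$ that is flanked, inside the same block, by another occurrence of $x$ on each side; such an occurrence has an $x$ before it and an $x$ after it with no simple letter in between, so the hypotheses of Lemma~\ref{L: insert} are met. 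Deleting these interior occurrences leaves at most one $x$ in every block except the block carrying the first occurrence of $x$, which keeps only its first and last $x$, hence at most two. Doing this for every multiple letter carries $\mathbf u$ to a compact word $\mathbf u'$ and $\mathbf v$ to a compact word $\mathbf v'$ with $\Sigma\vdash\mathbf u\approx\mathbf u'$ and $\Sigma\vdash\mathbf v\approx\mathbf v'$; thus $\mathbf u'\approx\mathbf v'$ is a compact identity of $M$.

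\emph{Second stage (well-balancing).} Because $M$ is a monoid, deleting letters preserves its identities, so for every multiple letter $x$ the almost-linear identity $\mathbf u'(x,\simp(\mathbf u'))\approx\mathbf v'(x,\simp(\mathbf v'))$ belongs to $\Gamma(M)$. I would balance the multiple letters one at a time. To balance $x$ in the current word $\mathbf z$, which still satisfies $\mathbf z(x,\simp(\mathbf z))=\mathbf u'(x,\simp(\mathbf u'))$ since the earlier steps moved only letters other than $x$, I first apply Lemma~\ref{L: insert} and its dual to gather the occurrences of $x$ inside each block into a single contiguous run; after this $\mathbf z$ is literally a substitution instance of $\mathbf z(x,\simp(\mathbf z))$ under $x\mapsto x$ and $t_i\mapsto(\text{block material})\,t_i$, the non-$x$ letters lying before the first run and after the last run being kept as untouched context. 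Applying $\mathbf u'(x,\simp(\mathbf u'))\approx\mathbf v'(x,\simp(\mathbf v'))$ through this substitution rewrites the $x$-distribution of $\mathbf z$ into that of $\mathbf v'$, while leaving the projection of every other multiple letter $y$ onto $\{y\}\cup\simp(\mathbf z)$ unchanged, so letters already balanced stay balanced. After all letters are treated, $\mathbf u'$ has been carried by $\Sigma\cup\Gamma(M)$ to a word $\mathbf w$ with $\mathbf w(y,\simp(\mathbf w))=\mathbf v'(y,\simp(\mathbf v'))$ for every multiple $y$; then $\mathbf w\approx\mathbf v'$ is a compact well-balanced identity of $M$, and the chain $\mathbf u\approx\mathbf u'\approx\mathbf w\approx\mathbf v'\approx\mathbf v$ exhibits $\mathbf u\approx\mathbf v$ as a consequence of $\{\mathbf w\approx\mathbf v'\}\cup\Sigma\cup\Gamma(M)$.

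The main obstacle is the gathering step for the block that carries the first occurrence of $x$, the only block permitted two occurrences. Lemma~\ref{L: insert} freely duplicates and slides the interior occurrences of $x$, but the very first and very last occurrence of $x$ in the whole word are rigid: having no $x$ before, respectively after, them, they can neither be deleted nor pushed across the material separating them. I expect this to be the delicate point, and I would dispatch it by rigidity itself: since $xy$ is an isoterm, the block containing the first occurrence of $x$ and the block containing its last occurrence coincide in $\mathbf u'$ and in $\mathbf v'$, so whenever these two occurrences share one block with no further $x$ present the two projections already agree and no rewriting is required, whereas in every other configuration the second occurrence in the first block is interior, hence movable, and the contiguous run can be formed. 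Checking that this case analysis always presents $\mathbf z$ as a genuine substitution instance, and that the gathering terminates, is the part I expect to demand the most care.
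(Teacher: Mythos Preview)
Your two-stage plan is natural, but the second stage has a real gap at exactly the point you flag as delicate, and your proposed resolution does not work.

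First, you invoke ``Lemma~\ref{L: insert} and its dual'' repeatedly, but no dual is available: $\Sigma=\{xtyxsy\approx xtxyxsy,\ xtysyx\approx xtysxyx\}$ is not self-dual, and the paper never proves a mirror version of Lemma~\ref{L: insert}. That lemma only lets you insert (or delete) an occurrence of $x$ to the \emph{left} of a segment $\mathbf v_2 x$ with $\mathbf v_2$ free of simple letters, provided some $x$ occurs earlier. In particular, in the block carrying the first two occurrences of $x$, written $\mathbf p\,x\,\mathbf r\,x\,\mathbf q$ with $\mathbf r$ nonempty, the first $x$ has no $x$ before it and the second $x$ has no $x$ after it inside the block; Lemma~\ref{L: insert} therefore only inserts or removes copies of $x$ strictly between these two occurrences, and never makes them adjacent. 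So the ``gathering'' needed to view $\mathbf z$ as a substitution instance of $\mathbf u'(x,\simp(\mathbf u'))$ cannot be carried out.

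Your fix --- that $xy$ being an isoterm forces the first-$x$-block (resp.\ last-$x$-block) of $\mathbf u'$ and $\mathbf v'$ to coincide --- is false. Nothing prevents $M$ from satisfying, say, $tx^2\approx xtx$; here the first $x$ sits in block~$1$ on the left and in block~$0$ on the right, yet $xy$ remains an isoterm. So even when all occurrences of $x$ lie in a single block of $\mathbf u'$, the projections $\mathbf u'(x,\simp)$ and $\mathbf v'(x,\simp)$ need not agree, and rewriting \emph{is} required precisely where you cannot gather.

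The paper avoids this obstacle by a different mechanism. Rather than applying the full almost-linear identity $\mathbf u'(x,\simp)\approx\mathbf v'(x,\simp)$ in one shot, it performs a case analysis on the position of the first two occurrences of $x$ in $\mathbf u$ and $\mathbf v$ and, from $\mathbf u\approx\mathbf v$, extracts a \emph{short} almost-linear consequence tailored to the situation (of the shape $x^2t\approx x^2tx$, $x^2tsx\approx x^2txsx$, or $tx^2\approx x^2tx$, suitably strengthened via $\Sigma$ to $xtxs\approx xtxsx$, $xzxtxsx\approx xzxtsx$, or $txsx\approx x^2txsx$). These short identities can be applied directly to $\mathbf u$ without any gathering, reduce the number of unbalanced letters by one, and the argument closes by induction. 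You should replace the global substitution step by this kind of local case analysis.
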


\begin{proof} 
Let $\mathbf u\approx\mathbf v$ be an identity of $M$. In view of Lemma~\ref{L: xy isoterm}, $\mul(\mathbf u)=\mul(\mathbf v)$ and $\mathbf u(\simp(\mathbf u))=\mathbf v(\simp(\mathbf v))$. 
Let ${\bf u} \approx {\bf v}$ be not well-balanced at precisely $k$ different letters. We will use induction on $k$.

{\bf Induction base}: $k=0$. 
Then ${\bf u} \approx {\bf v}$ is well-balanced. 
In view of Lemma~\ref{L: insert}, we can remove some occurrences of letters in $\mathbf u$ and $\mathbf v$ and obtain the words $\mathbf u^\ast$ and $\mathbf v^\ast$ such that the identity $\mathbf u^\ast\approx\mathbf v^\ast$ is compact and well-balanced and ${\bf u} \approx {\bf v}$ is equivalent modulo $\Sigma$ to $\mathbf u^\ast\approx\mathbf v^\ast$, we are done.

{\bf Induction step}: $k>0$. 
Then ${\bf u}' = {\bf u}(x, \simp({\bf u})) \ne {\bf v}(x, \simp({\bf v}))={\bf v}'$ for some $x \in \mul({\bf u})$.

Suppose that every block of $\bf u$ contains at most one occurrence of $x$. 
Then we apply the identity ${\bf u}' \approx {\bf v}'$ to ${\bf u}$ and obtain a word $\bf w$ such that the identity ${\bf w} \approx {\bf v}$ is $x$-well-balanced. 
By the induction assumption, the identity ${\bf w} \approx {\bf v}$ can be derived from a compact well-balanced identity $\sigma$ of $M$ together with $\Sigma \cup \Gamma(M)$. 
Then ${\bf u} \approx {\bf v}$ follows from $\{\sigma\}\cup\Sigma\cup\Gamma(M)$ because ${\bf u}' \approx {\bf v}'\in\Gamma(M)$, and we are done. 
By a similar argument we can show that if every block of $\bf v$ contains at most one occurrence of $x$, then ${\bf u} \approx {\bf v}$ follows from a compact well-balanced identity of $M$ together with $\Sigma \cup \Gamma(M)$.
So, by Lemma~\ref{L: insert}, we may assume that some block $\bf a$ of $\bf u$ contains the first and second occurrences of $x$ and some block $\bf b$ of $\bf v$ also contains the first and second occurrences of $x$. 
Two cases are possible.

{\bf Case 1}: the blocks $\mathbf a$ and $\mathbf b$ are corresponding.  
Lemma~\ref{L: insert} allows us to remove some occurrences of letters in $\mathbf u$ and $\mathbf v$ and obtain the words $\mathbf u^\ast$ and $\mathbf v^\ast$ such that the identity $\mathbf u^\ast\approx\mathbf v^\ast$ is compact well-balanced and ${\bf u} \approx {\bf v}$ is equivalent modulo $\Sigma$ to $\mathbf u^\ast\approx\mathbf v^\ast$.
Therefore, we may assume that $\mathbf u$ and $\mathbf v$ are compact.
Then, by symmetry, we may assume that some block $\mathbf b'$ of $\mathbf v$ contains $x$ but the corresponding block $\bf a'$ of $\bf u$ to $\mathbf b'$ does not contain $x$. 
We may choose $\mathbf a'$ and $\mathbf b'$ so that they are the rightmost blocks in the words $\mathbf u'$ and $\mathbf v'$ with such a property. 
Let $t$ be simple letter that is next to the block $\mathbf a'$ on the left of it in $\mathbf u$. 

{\bf Subcase 1.1}: the last occurrence of $x$ precedes the block $\mathbf a'$ in $\mathbf u$.  Then the identity $\mathbf u(x,t)\approx\mathbf v(x,t)$ is equivalent modulo $\Sigma$ to 
\begin{equation}
\label{xxt=xxtx}
x^2t \approx x^2tx,
\end{equation}
whence $M$ satisfies
\[
xtxs \stackrel{xtx \approx xtx^2}\approx xtx^2s  \stackrel{\eqref{xxt=xxtx}}\approx xtx^2sx \stackrel{xtx \approx xtx^2}\approx xtxsx.
\]
Then we apply the identity $xtxs \approx xtxsx$ to ${\bf u}$ and obtain a word $\bf w$ such that the identity ${\bf w} \approx {\bf v}$ is $x$-well-balanced. 
By the induction assumption, the identity ${\bf w} \approx {\bf v}$ can be derived from a compact well-balanced identity $\sigma$ of $M$ together with $\Sigma \cup \Gamma(M)$. 
Then ${\bf u} \approx {\bf v}$ follows from $\{\sigma\}\cup\Sigma\cup\Gamma(M)$ because $xtxs \approx xtxsx\in\Gamma(M)$, and we are done. 

{\bf Subcase 1.2}: the last occurrence of $x$ is preceded by the block $\mathbf a'$ in $\mathbf u$. 
Let $s$ be simple letter that is next to the blocks $\mathbf a'$ on the right of it in $\mathbf u$.
Then the identity $\mathbf u(x,t,s)\approx\mathbf v(x,t,s)$ is equivalent modulo $\Sigma$ to 
\begin{equation}
\label{xxtxsx=xxtsx}
x^2tsx \approx x^2txsx,
\end{equation}
whence $M$ satisfies
\[
xzxtxsx \stackrel{xtx \approx xtx^2}\approx xzx^2txsx  \stackrel{\eqref{xxtxsx=xxtsx}} \approx xzx^2tsx \stackrel{xtx \approx xtx^2}\approx xzxtsx.
\]
Then we apply the identity 
\begin{equation}
\label{xzxtxsx=xzxtsx}
xzxtxsx \approx xzxtsx
\end{equation}
to ${\bf u}$ and obtain a word $\bf w$ such that the identity ${\bf w} \approx {\bf v}$ is $x$-well-balanced. 
By the induction assumption, the identity ${\bf w} \approx {\bf v}$ can be derived from a compact well-balanced identity $\sigma$ of $M$ together with $\Sigma \cup \Gamma(M)$. 
Then ${\bf u} \approx {\bf v}$ follows from $\{\sigma\}\cup\Sigma\cup\Gamma(M)$ because $\eqref{xzxtxsx=xzxtsx}\in\Gamma(M)$, and we are done. 

{\bf Case 2}: the blocks $\mathbf a$ and $\mathbf b$ are not corresponding. 
Let $\mathbf a'$ be the corresponding block to $\mathbf b$. 
We may assume without loss of generality that the block $\mathbf a'$ precedes the block $\mathbf a$ in $\mathbf u$. 
Let $t$ be simple letter that is next to the block $\mathbf a'$ on the right of it in $\mathbf u$.
Clearly, the identity $\mathbf u(x,t)x\approx\mathbf v(x,t)x$ is equivalent modulo $\Sigma$ to 
\begin{equation}
\label{txx=xxtx}
tx^2\approx x^2tx,
\end{equation}
whence $M$ satisfies
\[
txsx \stackrel{xtx \approx xtx^2}\approx txsx^2  \stackrel{\eqref{txx=xxtx}}\approx x^2txsx^2 \stackrel{xtx \approx xtx^2}\approx x^2txsx.
\]
Then we apply the identity $txsx \approx x^2txsx$ to ${\bf u}$ and obtain a word $\bf w$ such that the corresponding block of $\mathbf w$ to the block $\mathbf b$ of $\mathbf v$ contains the first and second occurrences of $x$ in $\mathbf w$. 
By Case~1, the identity ${\bf w} \approx {\bf v}$ can be derived from a compact well-balanced identity $\sigma$ of $M$ together with $\Sigma \cup \Gamma(M)$. 
Then ${\bf u} \approx {\bf v}$ follows from $\{\sigma\}\cup\Sigma\cup\Gamma(M)$ because $txsx \approx x^2txsx\in\Gamma(M)$, and we are done. 
\end{proof}

The expression $_{i\mathbf w}x$ means the $i$th occurrence of a letter $x$ in a word $\mathbf w$. 
We say that a pair of occurrences $\{_i x, {_j y}\}$ of letters $x$ and $y$  in a well-balanced identity ${\bf u} \approx {\bf v}$ is \textit{critical} if $\bf u$ contains ${_{i\mathbf u} x}\,{_{j\mathbf u} y}$ as a subword and ${_{j\mathbf v} y}$ precedes ${_{i\mathbf v} x}$ in $\bf v$.
Let $\bf w$ denote the result of replacing ${_{i\mathbf u} x}\,{_{j\mathbf u} y}$ by ${_{j\mathbf u} y}\,{_{i\mathbf u} x}$ in $\bf u$. 
Given a set of identities $\Delta$ and a well-balanced identity ${\bf u} \approx {\bf v}$, we say that the critical pair $\{_i x, {_j y}\}$ is $\Delta$-removable in ${\bf u} \approx {\bf v}$ if $\Delta$ implies ${\bf u} \approx {\bf w}$.

The following special case of Lemma~3.4 in~\cite{Sapir-15} describes the standard method of deriving identities by removing critical pairs.  
This method traces back to the articles~\cite{Jackson-Sapir,Sapir-00}. 

\begin{lemma} 
\label{L: fblemma} 
Let $M$ be a monoid and $\Delta$ be a set of identities. 
Suppose that each critical pair in every  compact well-balanced identity of $M$ is $\Delta$-removable. 
Then every compact well-balanced identity of $M$ can be derived from $\Delta$.\qed
\end{lemma}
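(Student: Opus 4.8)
The plan is to prove the statement by induction on the number of \emph{inversions} of the identity, sorting $\mathbf u$ into $\mathbf v$ one critical pair at a time. First I would record what well-balancedness provides. Since $\mathbf u\approx\mathbf v$ is well-balanced, the simple letters occupy the same positions in both words and, for each $x\in\mul(\mathbf u)$, the occurrences of $x$ relative to the simple letters coincide in $\mathbf u$ and $\mathbf v$; in particular corresponding blocks contain the same number of occurrences of each letter. Hence every occurrence of a multiple letter lies in a block of $\mathbf u$ that corresponds to a uniquely determined block of $\mathbf v$, and $\mathbf u$ and $\mathbf v$ can differ only in the order of the multiple-letter occurrences inside corresponding blocks. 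Because a block contains no simple letters, consecutive multiple-letter occurrences inside a block are adjacent as letters of the word.

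Define the number of inversions of $\mathbf u\approx\mathbf v$ to be the number of pairs of occurrences $\{{}_{i}x,{}_{j}y\}$ of \emph{distinct} multiple letters whose relative order in $\mathbf u$ is opposite to their relative order in $\mathbf v$. By the observation above such a pair necessarily lies inside a single pair of corresponding blocks, and two occurrences of the same letter are never inverted, so the inversion count is precisely the total number of inversions of the permutations that rearrange the multiple-letter occurrences inside the blocks. If this count is $0$, then inside every block the occurrences appear in the same order in $\mathbf u$ and $\mathbf v$; together with well-balancedness this forces $\mathbf u=\mathbf v$, and the identity is derivable from $\Delta$ by reflexivity. This is the base of the induction.

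For the induction step assume there is at least one inversion. A permutation that is not the identity always has two adjacent elements in the wrong order (the basis of bubble sort), so there are occurrences ${}_{i\mathbf u}x$ and ${}_{j\mathbf u}y$ forming the subword ${}_{i\mathbf u}x\,{}_{j\mathbf u}y$ of $\mathbf u$ for which ${}_{j\mathbf v}y$ precedes ${}_{i\mathbf v}x$ in $\mathbf v$; that is, $\{{}_{i}x,{}_{j}y\}$ is a critical pair of $\mathbf u\approx\mathbf v$. By hypothesis it is $\Delta$-removable, so $\Delta$ implies $\mathbf u\approx\mathbf w$, where $\mathbf w$ arises from $\mathbf u$ by transposing these two adjacent occurrences. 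Now $\mathbf w\approx\mathbf v$ is again a compact well-balanced identity of $M$ with exactly one fewer inversion: the transposition keeps both occurrences inside the same block, so no letter-count per block changes and compactness and well-balancedness (relative to $\mathbf v$) are preserved; since $\Delta$ holds in $M$ we get $M\models\mathbf u\approx\mathbf w$ and hence $M\models\mathbf w\approx\mathbf v$; and as only the relative order of $x$ and $y$ is altered, the inversion count drops by exactly one. By the induction hypothesis $\Delta$ derives $\mathbf w\approx\mathbf v$, and chaining with $\Delta\vdash\mathbf u\approx\mathbf w$ gives $\Delta\vdash\mathbf u\approx\mathbf v$.

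The difficulty here is bookkeeping rather than depth. The delicate points are verifying that the selected adjacent inversion really satisfies the definition of a critical pair---in particular that the two occurrences are adjacent as \emph{letters}, which rests on blocks containing no simple letters---and that a single transposition simultaneously preserves compactness, preserves well-balancedness relative to $\mathbf v$, keeps the identity valid in $M$ (for which one uses $M\models\Delta$, the setting in which the lemma is applied), and strictly lowers the inversion count. These four facts are exactly what guarantee that the induction terminates and never leaves the class of identities to which the hypothesis applies.
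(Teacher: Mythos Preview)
The paper does not give its own proof of this lemma; it is stated as a special case of \cite[Lemma~3.4]{Sapir-15} and closed with a \qed. Your bubble-sort argument on the number of inversions is exactly the standard proof of such statements, and the verifications you sketch (that well-balancedness forces all differences into corresponding blocks, that zero inversions implies $\mathbf u=\mathbf v$, that a nonzero inversion count yields an \emph{adjacent} inversion and hence a critical pair, and that a single swap stays compact and well-balanced while lowering the count by one) are all correct.

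The one genuine subtlety you flag deserves emphasis. To apply the induction hypothesis to $\mathbf w\approx\mathbf v$ you need this to be a compact well-balanced identity \emph{of $M$}, and for that you invoke $M\models\Delta$ so that $\Delta\vdash\mathbf u\approx\mathbf w$ yields $M\models\mathbf u\approx\mathbf w$. The lemma as stated in the paper does not carry the hypothesis $M\models\Delta$, but in its sole application here one takes $\Delta=\Sigma\cup\Phi(M)$, where $M\models\Sigma$ by assumption and $M\models\Phi(M)$ by definition of $\Phi(M)$, so the extra assumption is automatically satisfied. Your proof therefore establishes the lemma in the setting in which it is used; if you want it verbatim as stated, you would have to either confirm that the cited \cite[Lemma~3.4]{Sapir-15} already builds in $M\models\Delta$, or note explicitly that this hypothesis is being added.
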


Let $\Phi$ denote the set of compact well-balanced identities with two multiple letters of the form
${\bf p}{\bf c} \approx {\bf q}{\bf c}$ or $ys{\bf p}{\bf c} \approx ys{\bf q}{\bf c}$, where ${\bf p}$ and $\mathbf q$ are words in $\{x,y\}^+$ with the property that both $x$ and $y$ appear at least once and at most twice in both ${\bf p}$ and $\bf q$ and occur in $\mathbf p$ the same number of times as in $\mathbf q$, and $\bf c$ be either the empty word or a word from the set
\begin{equation} 
\label{C} 
\{txy,\,\prod_{\ell=1}^k(t_\ell\mathbf e_\ell) \mid k \ge 1, \mathbf e_1,\mathbf e_2,\dots,\mathbf e_k\in\{1,x,y\}\}.
\end{equation}
Notice that if ${\bf u} \approx {\bf v}$ is an identity from $\Phi$ then the difference between $\bf u$ and $\bf v$ is only in the corresponding  blocks $\bf p$ and $\bf q$. 
Also notice that at most two blocks of $\bf u$ (and $\bf v$) contain both $x$ and $y$. Moreover, if two blocks of $\bf u$ contain both $x$ and $y$ then $\bf u$ has at most three blocks and the last block of $\bf u$ is $xy$. 
For example, the set $\Phi$ contains $ysxy^2txy \approx ysyxytxy$. 
For more examples of identities in $\Phi$ we refer the reader to  look at the cluster of seven identities used in the proof of Proposition~\ref{P: L(K)} below.

Given a monoid $M$ we use $\Phi(M)$ to denote the set of those identities from $\Phi$ that hold in $M$.

\begin{proof}[Proof of Proposition~\ref{P: hfb}]  
If $xy$ is not an isoterm for $M$, then $M$ is either commutative or idempotent by Lemma~\ref{L: xy is not an isoterm}. 
Then $M$ is FB because each commutative monoid~\cite{Head-68} and each idempotent monoid~\cite{Wismath-86} are FB. 
So, we may assume that $xy$ is an isoterm for $M$. 
According to Lemma~\ref{L: compact}, every identity of $M$ can be derived from a compact well-balanced identity of $M$ together with $\Sigma \cup \Gamma(M)$. 
By the result of Volkov~\cite[Corollary~2]{Volkov-90}, every set of almost-linear identities gives a FB variety. 
Thus, $\var \Gamma(M)$ is FB. 
It is easy to see that every subset of $\Phi$ gives a FB subvariety within $\var\Sigma$. 
So, it suffices to prove that every compact well-balanced identity ${\bf u} \approx {\bf v}$ of $M$ follows from $\Sigma\cup\Phi(M)$. 

Let $\{_i x, {_j y}\}$ be a critical pair in ${\bf u} \approx {\bf v}$ and $\bf w$ denote the result of replacing ${_{i\mathbf u} x}\,{_{j\mathbf u} y}$ by ${_{j\mathbf u} y}\,{_{i\mathbf u} x}$ in $\bf u$. 
We consider four cases and show that in every case the identity ${\bf u} \approx {\bf v}$ together with $\Sigma$  imply some identity in $\Phi(M)$ which (together with $\Sigma$) can be used to ``remove the critical pair $\{_i x, {_j y}\}$", that is, to derive the identity ${\bf u} \approx {\bf w}$. 

Since ${\bf u} \approx {\bf v}$ is well-balanced, we have $\mathbf u(\simp({\bf u})) = \mathbf v(\simp({\bf v}))$ and both $x$ and $y$ are multiple letters. 
Let $\bf a$ denote the block of $\bf u$, which contains the critical pair $\{_i x, {_j y}\}$ and $\mathbf a'$ denote the corresponding block of $\mathbf v$ to $\mathbf a$. 
Then $\mathbf a=\mathbf a_1\,{_{i\mathbf u} x}\,{_{j\mathbf u} y}\,\mathbf a_2$, $\mathbf u=\mathbf u'\mathbf a\mathbf u''$ and $\mathbf v=\mathbf v'\mathbf a'\mathbf v''$ for some $\mathbf a_1,\mathbf a_2,\mathbf u',\mathbf u'',\mathbf v',\mathbf v''\in\mathfrak A^\ast$.

{\bf Case 1}: block $\bf a$ contains neither the first occurrence of $x$ nor the first occurrence of $y$. 
In this case, $\mathbf a(x,y) = xy$ and $\mathbf a'(x,y) = yx$ because $\mathbf u\approx \mathbf v$ is compact and well-balanced. 
Two subcases are possible.

{\bf Subcase 1.1}: some block $\mathbf b$ of ${\bf u}''$ contains both an occurrence of $x$ and an occurrence of $y$. 
Let $\mathbf u''=\mathbf u_1\mathbf b\mathbf u_2$ for some $\mathbf u_1,\mathbf u_2\in\mathfrak A^\ast$. 
Let $t$ be the simple letter that is immediately to the right of the block $\bf a$. Then ${\bf u} \approx {\bf v}$ implies
\[ 
{\bf u}'(x,y)xyt{\bf u}''(x,y) \approx {\bf v}'(x,y)yxt{\bf v}''(x,y)
\]
and so
\[ 
{\bf u}'(x,y)xyt{\bf u}''(x,y)xy \approx {\bf v}'(x,y)yxt{\bf v}''(x,y)xy.
\]
The last identity is equivalent modulo $\Sigma$ to
\begin{equation} 
\label{pxytxy=qyxtxy} 
{\bf p}xytxy \approx {\bf q}yxtxy,
\end{equation}
where $\mathbf p,\mathbf q\in\{xy,yx\}$ because we can remove all non-first occurrences of $x$ and $y$ in ${\bf u}'(x,y)$ and ${\bf v}'(x,y)$ and  the words ${\bf u}''(x,y)$ and ${\bf v}''(x,y)$ by Lemma~\ref{L: insert}. 
Clearly, $\eqref{pxytxy=qyxtxy}\in \Phi(M)$. 
Then $M$ satisfies
\[
\mathbf u\stackrel{\text{Lemma~\ref{L: insert}}}\approx \mathbf u'\mathbf a_1\mathbf pxy\mathbf a_2\mathbf u_1xy\mathbf b\mathbf u_2\stackrel{\eqref{pxytxy=qyxtxy}}\approx \mathbf u'\mathbf a_1\mathbf qyx\mathbf a_2\mathbf u_1xy\mathbf b\mathbf u_2\stackrel{\text{Lemma~\ref{L: insert}}}\approx \mathbf w,
\]
and we are done.

{\bf Subcase 1.2}: no block of $\mathbf u''$ contains both an occurrence of $x$ and an occurrence of $y$. 
Let $\{t_1, t_2,\dots, t_k\}$ be the possibly empty set of simple letters of $\bf u$ in $\mathbf u''$. Then ${\bf u} \approx {\bf v}$ implies
\begin{equation}
\label{subst identity}
{\bf u}'(x,y)xy\mathbf u''(x,y,t_1,t_2,\dots,t_k) \approx {\bf v}'(x,y)yx\mathbf v''(x,y,t_1,t_2,\dots,t_k).
\end{equation}
Since the identity ${\bf u} \approx {\bf v}$ is well-balanced and compact,
\begin{equation}
\label{u''=v''=e}
\mathbf u''(x,y,t_1,t_2,\dots,t_k)=\mathbf v''(x,y,t_1,t_2,\dots,t_k)=\prod_{\ell=1}^k(t_\ell\mathbf e_\ell),
\end{equation}
where $\mathbf e_1,\mathbf e_2,\dots,\mathbf e_k\in\{1,x,y\}$. 
The identity~\eqref{subst identity} is equivalent modulo $\Sigma$ to
\begin{equation} 
\label{pxyte=qyxte} 
{\bf p}xy\prod_{\ell=1}^k(t_\ell\mathbf e_\ell) \approx {\bf q}yx\prod_{\ell=1}^k(t_\ell\mathbf e_\ell),
\end{equation}
where $\mathbf p,\mathbf q\in\{xy,yx\}$ because we can remove all non-first occurrences of $x$ and $y$ in ${\bf u}'(x,y)$ and ${\bf v}'(x,y)$ by Lemma~\ref{L: insert}. 
Clearly, $\eqref{pxyte=qyxte}\in \Phi(M)$. 
Then $M$ satisfies
\[
\mathbf u\stackrel{\text{Lemma~\ref{L: insert}}}\approx \mathbf u'\mathbf a_1\mathbf pxy\mathbf a_2\mathbf u''\stackrel{\eqref{pxyte=qyxte}}\approx \mathbf u'\mathbf a_1\mathbf qyx\mathbf a_2\mathbf u''\stackrel{\text{Lemma~\ref{L: insert}}}\approx \mathbf w,
\]
and we are done.

{\bf Case 2}:  block $\bf a$ contains the first occurrence of $x$ and the first occurrence of $y$.
In this case, both $x$ and $y$ occur at least once and at most twice in ${\bf a}$ because $\bf u$ is compact. 
Let $\mathbf a(x,y) = \mathbf p\,{_{i\mathbf u}x}\,{_{j\mathbf u}y}\mathbf q$ and $\mathbf a'(x,y) = \mathbf p'\,{_{j\mathbf v}y}\,\mathbf q'\,{_{i\mathbf v}x}\mathbf r'$. 
Two subcases are possible.

{\bf Subcase 2.1}: some block $\mathbf b$ of ${\bf u}''$ contains both an occurrence of $x$ and an occurrence of $y$. 
Let $\mathbf u''=\mathbf u_1\mathbf b\mathbf u_2$ for some $\mathbf u_1,\mathbf u_2\in\mathfrak A^\ast$. 
Let $t$ be the simple letter that is immediately to the right of the block $\bf a$. 
Then  ${\bf u} \approx {\bf v}$  implies
\[
\mathbf pxy\mathbf qt{\bf u}''(x,y) \approx \mathbf p'y\mathbf q'x\mathbf r't{\bf v}''(x,y)
\]
and so
\[ 
\mathbf pxy\mathbf qt{\bf u}''(x,y)xy \approx \mathbf p'y\mathbf q'x\mathbf r't{\bf v}''(x,y)xy.
\]
The last identity is equivalent modulo $\Sigma$ to
\begin{equation} 
\label{pxyqtxy=p'yq'xr'txy} 
\mathbf pxy\mathbf qtxy \approx \mathbf p'y\mathbf q'x\mathbf r'txy
\end{equation}
because we can remove  the words ${\bf u}''(x,y)$ and ${\bf v}''(x,y)$ by Lemma~\ref{L: insert}. 
Clearly, $\eqref{pxyqtxy=p'yq'xr'txy} \in \Phi(M)$. 
Then $M$ satisfies
\[
\mathbf u\stackrel{\text{Lemma~\ref{L: insert}}}\approx \mathbf u'\mathbf a_1\mathbf pxy\mathbf q\mathbf a_2\mathbf u_1xy\mathbf b\mathbf u_2\stackrel{\eqref{pxyqtxy=p'yq'xr'txy}}\approx \mathbf u'\mathbf a_1\mathbf p'y\mathbf q'x\mathbf r'\mathbf a_2\mathbf u_1xy\mathbf b\mathbf u_2.
\]
Evidently, if $\mathbf p'=1$, then $\con(\mathbf p')\subseteq\con(\mathbf a_1)$. 
Let now $\mathbf p'\ne1$. 
If $x\in \con(\mathbf p')$, then $i=2$ and so $x\in\con(\mathbf p)$.
If $y\in \con(\mathbf p')$, then $j=2$ and so $y\in\con(\mathbf p)$.
We see that $\con(\mathbf p')\subseteq\con(\mathbf p)\subseteq\con(\mathbf a_1)$ in either case.
Analogously, $\con(\mathbf r')\subseteq\con(\mathbf a_2)$. 
Further, if $x\in\con(\mathbf q')$, then $i=2$ and so $x\in\con(\mathbf a_1)$, and if $y\in\con(\mathbf q')$, then $j=1$ and so $y\in\con(\mathbf a_2)$ because $\mathbf u$ is compact. 
This implies that $M$ satisfies $\mathbf u'\mathbf a_1\mathbf p'y\mathbf q'x\mathbf r'\mathbf a_2\mathbf u_1xy\mathbf b\mathbf u_2\approx \mathbf w$ by Lemma~\ref{L: insert}, and we are done.

{\bf Subcase 2.2}: no block of $\mathbf u''$ contains both an occurrence of $x$ and an occurrence of $y$. 
Let $\{t_1, t_2,\dots, t_k\}$ be the possibly empty set of simple letters of $\bf u$ in $\mathbf u''$.
Since the identity ${\bf u} \approx {\bf v}$ is well-balanced and compact, the equality~\eqref{u''=v''=e} holds, where $\mathbf e_1,\mathbf e_2,\dots,\mathbf e_k\in\{1,x,y\}$. 
Then ${\bf u} \approx {\bf v}$ implies
\begin{equation} 
\label{pxyqte=p'yq'xr'te} 
{\bf p}xy{\bf q}\prod_{\ell=1}^k(t_\ell\mathbf e_\ell) \approx {\bf p}'y{\bf q}'x{\bf r}'\prod_{\ell=1}^k(t_\ell\mathbf e_\ell).
\end{equation}
Clearly, $\eqref{pxyqte=p'yq'xr'te}\in \Phi(M)$. 
Then $M$ satisfies
\[
\mathbf u\stackrel{\text{Lemma~\ref{L: insert}}}\approx \mathbf u'\mathbf a_1\mathbf pxy\mathbf q\mathbf a_2\mathbf u''\stackrel{\eqref{pxyqte=p'yq'xr'te}}\approx \mathbf u'\mathbf a_1\mathbf p'y\mathbf q'x\mathbf r'\mathbf a_2\mathbf u''.
\]
Evidently, if $\mathbf p'=1$, then $\con(\mathbf p')\subseteq\con(\mathbf a_1)$. 
Let now $\mathbf p'\ne1$. 
If $x\in \con(\mathbf p')$, then $i=2$ and so $x\in\con(\mathbf p)$.
If $y\in \con(\mathbf p')$, then $j=2$ and so $y\in\con(\mathbf p)$.
We see that $\con(\mathbf p')\subseteq\con(\mathbf p)\subseteq\con(\mathbf a_1)$ in either case.
Analogously, $\con(\mathbf r')\subseteq\con(\mathbf a_2)$. 
Further, if $x\in\con(\mathbf q')$, then $i=2$ and so $x\in\con(\mathbf a_1)$, and if $y\in\con(\mathbf q')$, then $j=1$ and so $y\in\con(\mathbf a_2)$ because $\mathbf u$ is compact. 
This implies that $M$ satisfies $\mathbf u'\mathbf a_1\mathbf p'y\mathbf q'x\mathbf r'\mathbf a_2\mathbf u''\approx \mathbf w$ by Lemma~\ref{L: insert}, and we are done.

{\bf Case 3}: block $\bf a$ contains the first occurrence of $x$ but does not contain the first occurrence of $y$. 
In this case, letter $x$ appears at most twice in ${\bf a}(x,y)$ but letter $y$ only once because $\bf u$ is compact. 
Let $\mathbf a(x,y) = \mathbf p\,{_{i\mathbf u}x}\,{_{j\mathbf u}y}\mathbf q$ and $\mathbf a'(x,y) = \mathbf p'\,{_{j\mathbf v}y}\,\mathbf q'\,{_{i\mathbf v}x}\mathbf r'$. 
Let $s$ be a simple letter that is next to the block $\mathbf a$ on the left of it. 
Two subcases are possible.

{\bf Subcase 3.1}: some block $\mathbf b$ of ${\bf u}''$ contains both an occurrence of $x$ and an occurrence of $y$. 
Let $\mathbf u''=\mathbf u_1\mathbf b\mathbf u_2$ for some $\mathbf u_1,\mathbf u_2\in\mathfrak A^\ast$. 
Let $t$ be the simple letter that is immediately to the right of the block $\bf a$.
Then ${\bf u} \approx {\bf v}$ implies
\[
y^ms\mathbf pxy\mathbf qt{\bf u}''(x,y) \approx y^ms\mathbf p'y\mathbf q'x\mathbf r't{\bf v}''(x,y)
\]
and so
\[
y^ms\mathbf pxy\mathbf qt{\bf u}''(x,y)xy \approx y^ms\mathbf p'y\mathbf q'x\mathbf r't{\bf v}''(x,y)xy,
\]
where $m$ is the number of occurrences of $y$ in $\mathbf u'$. 
The last identity is equivalent modulo $\Sigma$ to
\begin{equation} 
\label{y^mpxyqtxy=y^mp'yq'xr'txy} 
y^ms\mathbf pxy\mathbf qtxy \approx y^ms\mathbf p'y\mathbf q'x\mathbf r'txy
\end{equation}
because we can remove the words ${\bf u}''(x,y)$ and ${\bf v}''(x,y)$ by Lemma~\ref{L: insert}. 
If $m>1$, then $M$ satisfies
\[
ys\mathbf pxy\mathbf qtxy\stackrel{\text{Lemma~\ref{L: insert}}}\approx ysy^m\mathbf pxy\mathbf qtxy\stackrel{\eqref{y^mpxyqtxy=y^mp'yq'xr'txy}}\approx ysy^m\mathbf p'y\mathbf q'x\mathbf r'txy\stackrel{\text{Lemma~\ref{L: insert}}}\approx ys\mathbf p'y\mathbf q'x\mathbf r'txy.
\]
So, in either case the identity
\begin{equation}
\label{ypxyqtxy=yp'yq'xr'txy} 
ys\mathbf pxy\mathbf qtxy \approx ys\mathbf p'y\mathbf q'x\mathbf r'txy
\end{equation}
holds in $M$. 
Clearly, $\eqref{ypxyqtxy=yp'yq'xr'txy} \in \Phi(M)$. 
Then $M$ satisfies
\[
\mathbf u\stackrel{\text{Lemma~\ref{L: insert}}}\approx \mathbf u'\mathbf a_1\mathbf pxy\mathbf q\mathbf a_2\mathbf u_1xy\mathbf b\mathbf u_2\stackrel{\eqref{ypxyqtxy=yp'yq'xr'txy}}\approx \mathbf u'\mathbf a_1\mathbf p'y\mathbf q'x\mathbf r'\mathbf a_2\mathbf u_1xy\mathbf b\mathbf u_2.
\]

Since $\mathbf p'\mathbf q'\mathbf r'\in\{1,x\}$, if $\mathbf p'\mathbf q'\ne1$, then $i=2$, whence $\con(\mathbf p'\mathbf q')\subseteq\con(\mathbf p)$ and so $\con(\mathbf p'\mathbf q')\subseteq\con(\mathbf a_1)$. 
Analogously, $\con(\mathbf r')\subseteq\con(\mathbf a_2)$. 
This implies that $M$ satisfies $\mathbf u'\mathbf a_1\mathbf p'y\mathbf q'x\mathbf r'\mathbf a_2\mathbf u_1xy\mathbf b\mathbf u_2\approx \mathbf w$ by Lemma~\ref{L: insert}, and we are done.

{\bf Subcase 3.2}: no block of $\mathbf u''$ contains both an occurrence of $x$ and an occurrence of $y$. 
Let $\{t_1, t_2,\dots, t_k\}$ be the possibly empty set of simple letters of $\bf u$ in $\mathbf u''$. Since the identity ${\bf u} \approx {\bf v}$ is well-balanced and compact, the equality~\eqref{u''=v''=e} holds, where $\mathbf e_1,\mathbf e_2,\dots,\mathbf e_k\in\{1,x,y\}$. 
Then ${\bf u} \approx {\bf v}$ implies
\begin{equation} 
\label{y^mspxyqte=y^msp'yq'xr'te} 
y^ms{\bf p}xy{\bf q}\mathbf c \approx y^ms{\bf p}'y{\bf q}'x{\bf r}'\mathbf c,
\end{equation}
where $m$ is the number of occurrences of $y$ in $\mathbf u'$ and $\mathbf c=\prod_{\ell=1}^k(t_\ell\mathbf e_\ell)$. 
If $m>1$, then $M$ satisfies
\[
ys\mathbf pxy\mathbf q\mathbf c\stackrel{\text{Lemma~\ref{L: insert}}}\approx ysy^m\mathbf pxy\mathbf q\mathbf c\stackrel{\eqref{y^mspxyqte=y^msp'yq'xr'te}}\approx ysy^m\mathbf p'y\mathbf q'x\mathbf r'\mathbf c\stackrel{\text{Lemma~\ref{L: insert}}}\approx ys\mathbf p'y\mathbf q'x\mathbf r'\mathbf c.
\]
So, in either case the identity
\begin{equation}
\label{yspxyqc=ysp'yq'xr'c} 
ys\mathbf pxy\mathbf q\mathbf c \approx ys\mathbf p'y\mathbf q'x\mathbf r'\mathbf c
\end{equation}
holds in $M$.  
Clearly, $\eqref{yspxyqc=ysp'yq'xr'c}\in \Phi(M)$. Then $M$ satisfies
\[
\mathbf u\stackrel{\text{Lemma~\ref{L: insert}}}\approx \mathbf u'\mathbf a_1\mathbf pxy\mathbf q\mathbf a_2\mathbf u''\stackrel{\eqref{yspxyqc=ysp'yq'xr'c}}\approx \mathbf u'\mathbf a_1\mathbf p'y\mathbf q'x\mathbf r'\mathbf a_2\mathbf u''.
\]
Since $\mathbf p'\mathbf q'\mathbf r'\in\{1,x\}$, if $\mathbf p'\mathbf q'\ne1$, then $i=2$, whence $\con(\mathbf p'\mathbf q')\subseteq\con(\mathbf p)$ and so $\con(\mathbf p'\mathbf q')\subseteq\con(\mathbf a_1)$. 
Analogously, $\con(\mathbf r')\subseteq\con(\mathbf a_2)$. 
This implies that $M$ satisfies $\mathbf u'\mathbf a_1\mathbf p'y\mathbf q'x\mathbf r'\mathbf a_2\mathbf u''\approx \mathbf w$ by Lemma~\ref{L: insert}, and we are done.

{\bf Case 4}: block $\bf a$ contains the first occurrence of $y$ but does not contain the first occurrence of $x$. This case is similar to Case 3.

So, we have proved that the critical pair $\{_i x, {_j y}\}$ is $(\Sigma\cup\Phi(M))$-removable in ${\bf u} \approx {\bf v}$. 
Now Lemma~\ref{L: fblemma} applies, with the conclusion that every compact well-balanced identity $\mathbf u\approx \mathbf v$ of $M$ can be derived from $\Sigma\cup\Phi(M)$, and we are done.
\end{proof}

\begin{corollary}
\label{C: hfb1}  
Any monoid $M$ that satisfies $xtx \approx xtx^2$ and 
\begin{equation}
\label{xyytx=xyyxtx}
xy^2tx \approx xy^2xtx
\end{equation}
is HFB.
\end{corollary}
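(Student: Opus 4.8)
The obvious plan is to reduce to Proposition~\ref{P: hfb}: if the two hypothesized identities implied $\Sigma$, then $M\models\Sigma$ and we would be done, so I would first try to derive the two members of $\Sigma$ from $xtx\approx xtx^2$ and \eqref{xyytx=xyyxtx}. Two cheap consequences are worth recording. Putting $y\mapsto 1$ in \eqref{xyytx=xyyxtx} gives $xtx\approx x^2tx$, and together with $xtx\approx xtx^2$ this lets one freely resize both the first and the last island of any repeated letter inside a factor of the shape $x\mathbf p x$. The one genuinely useful move supplied by \eqref{xyytx=xyyxtx} is that a fresh occurrence of $x$ may be inserted immediately after a square $y^2$ that is itself preceded by $x$.

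The difficulty I expect — and the reason this is not a one-line corollary — is that a direct derivation of $\Sigma$ fails. In both $xtyxsy\approx xtxyxsy$ and $xtysyx\approx xtysxyx$ the extra occurrence of $x$ must be created next to a \emph{simple} letter ($t$ or $s$), and one checks that the class of $xtysyx$ under the two hypothesized identities consists only of the words $x^a ty^b sy^c x^d$ (the simple letters $t,s$ keep the $x$- and $y$-islands apart, so \eqref{xyytx=xyyxtx} can never fire to produce a new middle island); an analogous closed family handles $xtyxsy$. Hence neither member of $\Sigma$ is a consequence of the hypotheses, and one cannot merely quote Proposition~\ref{P: hfb}. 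What saves the situation is that, in the proof of Proposition~\ref{P: hfb}, $\Sigma$ enters only through the insertion Lemma~\ref{L: insert}: every critical-pair manipulation in Cases~1--4, and every reduction inside Lemma~\ref{L: compact}, is an application of Lemma~\ref{L: insert} or of $xtx\approx xtx^2$. So the plan is to re-prove Lemma~\ref{L: insert} from $\{xtx\approx xtx^2,\ \eqref{xyytx=xyyxtx}\}$ and then run the proof of Proposition~\ref{P: hfb} unchanged.

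The heart of the matter, and the main obstacle, is therefore re-proving Lemma~\ref{L: insert}: $\mathbf v_1\mathbf v_2 x\mathbf v_3\approx\mathbf v_1 x\mathbf v_2 x\mathbf v_3$ when $\mathbf v_1$ contains $x$ and $\mathbf v_2$ has no letter simple in $\mathbf w$. Here the hypothesis that $\mathbf v_2$ carries only multiple letters is exactly what makes the two weak identities strong enough: each letter of $\mathbf v_2$ can be squared with $xtx\approx xtx^2$, producing precisely the square that \eqref{xyytx=xyyxtx} needs in order to push a copy of $x$ across it, after which the auxiliary squares are contracted with $xtx\approx x^2tx$. The prototype is the single-letter case $xy^2x\approx xyxyx$: grow $y$ to $y^3$ (via $y^2\approx y^3$), apply \eqref{xyytx=xyyxtx} to reach $xy^2xyx$, then contract with $xtx\approx x^2tx$ to reach $xyxyx$; the general case follows by inducting on $|\mathbf v_2|$ and spreading $x$ through $\mathbf v_2$ one letter at a time. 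Once Lemma~\ref{L: insert} is available the remainder of Proposition~\ref{P: hfb} applies verbatim, so every identity of $M$ follows from $\{xtx\approx xtx^2,\ \eqref{xyytx=xyyxtx}\}\cup\Gamma(M)\cup\Phi(M)$ and $M$ is HFB. The point to check most carefully is precisely the claim that no appeal to $\Sigma$ in that proof requires more than Lemma~\ref{L: insert} together with the hypothesis $xtx\approx xtx^2$.
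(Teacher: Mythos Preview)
Your diagnosis that $\Sigma$ cannot be derived from $\{xtx\approx xtx^2,\ \eqref{xyytx=xyyxtx}\}$ is correct, but your proposed workaround is self-defeating. You plan to re-prove Lemma~\ref{L: insert} from the hypotheses and then rerun the proof of Proposition~\ref{P: hfb}. However, Lemma~\ref{L: insert} applied to $\mathbf w = xtysyx$ with $\mathbf v_1 = xtys$, $\mathbf v_2 = y$, $\mathbf v_3 = 1$ yields precisely $xtysyx \approx xtysxyx$, the second member of $\Sigma$. Your own argument---that the equivalence class of $xtysyx$ under the hypotheses consists only of words $x^a t y^b s y^c x^d$---therefore shows that the insertion lemma is \emph{not} a consequence of the hypotheses either. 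Your prototype $xy^2x\approx xyxyx$ succeeds only because no simple letter separates the leading $x$ from the square; once a simple letter intervenes (as $t,s$ do in $xtysyx$), the identity \eqref{xyytx=xyyxtx} has no handle, exactly as you yourself observed when analysing $\Sigma$. The inductive step ``spread $x$ through $\mathbf v_2$ one letter at a time'' breaks down for the same reason.

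The paper sidesteps the obstruction by passing to the dual. One checks that the \emph{dual} of $\Sigma$, namely $\{xtyxsy \approx xtyxysy,\ xysytx \approx xyxsytx\}$, \emph{is} derivable from the hypotheses:
\[
\begin{aligned}
xtyxsy &\stackrel{xtx\approx xtx^2}\approx xtyx^2sy \stackrel{\eqref{xyytx=xyyxtx}}\approx xtyx^2ysy \stackrel{xtx\approx xtx^2}\approx xtyxysy,\\
xysytx &\stackrel{xtx\approx x^2tx}\approx xy^2sytx \stackrel{\eqref{xyytx=xyyxtx}}\approx xy^2xsytx \stackrel{xtx\approx x^2tx}\approx xyxsytx,
\end{aligned}
\]
where $xtx\approx x^2tx$ comes from erasing $y$ in \eqref{xyytx=xyyxtx}. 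The dual of Proposition~\ref{P: hfb} then gives HFB immediately. The asymmetry you should have exploited is that in \eqref{xyytx=xyyxtx} the square $y^2$ must sit immediately \emph{after} an $x$, while the simple material $t$ sits \emph{between} the square and the second $x$; this is exactly the orientation needed for the dual of $\Sigma$ and exactly the wrong orientation for $\Sigma$ itself.
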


\begin{proof} 
The identities
\[
\begin{aligned}
&xtyxsy  \stackrel{xtx \approx xtx^2}\approx xtyx^2sy \stackrel{\eqref{xyytx=xyyxtx}}{\approx} xtyx^2ysy\stackrel{xtx \approx xtx^2}\approx xtyxysy,\\
&xysytx  \stackrel{xtx \approx x^2tx}\approx xy^2sytx \stackrel{\eqref{xyytx=xyyxtx}}{\approx} xy^2xsytx\stackrel{xtx \approx x^2tx}{\approx} xyxsytx
\end{aligned}
\]
hold in $M$. 
Therefore, $M$ is HFB by the dual to Proposition~\ref{P: hfb}.
\end{proof}

Put 
\[
E = \langle a, b, c \mid a^2 = ab = 0, ba = ca = a, b^2 = bc = b, c^2= cb = c \rangle=\{a,b,c,ac,0\}.
\]
The monoid $E^1$ was first investigated in Lee and Li \cite[Section~14]{Lee-Li}, where it was shown to be finitely based by $\{xtx \approx xtx^2 \approx x^2tx,\, xy^2x \approx x^2y^2\}$. 
Let $\overline{A}^1$ denote the dual of the monoid $A^1$ (see Section~\ref{sec: intr}).

\begin{example} 
\label{E: EB}
The monoid $\overline{A}^1\times E^1$ is HFB.
\end{example}

\begin{proof} 
It follows from~\cite[Section~14]{Lee-Li} that $E^1 \models \{xtx \approx xtx^2,\, xy^2tx \approx xy^2xtx\}$. 
According to~\cite[p.~15]{Zhang-Luo}, we also have $\overline{A}^1\models \{xtx \approx xtx^2,\, xy^2tx \approx xy^2xtx\}$. 
Therefore, $\overline{A}^1\times E^1$ is HFB by Corollary~\ref{C: hfb1}.
\end{proof} 

Example~\ref{E: EB} generalizes the result from~\cite{Jackson-Lee} that $E^1$ is HFB and the result from~\cite{Zhang-Luo} that $A^1$ is HFB.

\section{Classification of varieties of aperiodic monoids}
\label{some monoids1}

Recall  from Section~\ref{preliminaries} that $\mathbf L=\mathbb M(xtxysy)$ and $\mathbf M=\mathbb M(xytxsy,xsytxy)$
are  limit varieties of monoids \cite{Jackson-05}.
The following lemma is a combination of~\cite[Theorem~3.2]{Lee-12} and~\cite[Lemma~2.1]{Gusev-JAA}.

\begin{sortlemma}
\label{SL: 1}
Let $\mathbf V$ be a variety of aperiodic monoids. Then either $\mathbf V$ is HFB or one of the following holds:
\begin{itemize}
\item[\textup{(i)}] $\mathbf V$ contains either $\mathbf L$ or $\mathbf M$;
\item[\textup{(ii)}] $\mathbf V$ satisfies either $xtx \approx xtx^2$ or $xtx \approx x^2tx$.\qed
\end{itemize}
\end{sortlemma}

The goal of this section is to refine Sorting Lemma~\ref{SL: 1} (see Sorting Lemma~\ref{SL: 2} below).

We note that some varieties can be generated by monoids of the form $M_\tau(\mathtt u)$ for several congruences $\tau$. 
For example, $M_\gamma(a^+t) \cong M_\lambda(a^+t) \cong M_\rho(a^+t)$ \cite[Fact~6.2]{Sapir-21}. 
In such a case, we choose the coarsest congruence $\tau$ to identify the monoid variety.

\begin{lemma}
\label{L: does not contain E}
Let $\mathbf V$ be a monoid variety such that $t$ is an isoterm for $\mathbf V$ and $\mathbf V$ satisfies $xtx \approx xtx^2$. 
If $\mathbb M_{\gamma}(a^+t) \nsubseteq\mathbf V$, then $\mathbf V$ satisfies  $xtxs \approx xtxsx$.
\end{lemma}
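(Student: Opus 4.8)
The plan is to argue contrapositively through the description of $\gamma$-terms. Since $\mathbb M_\gamma(a^+t)\nsubseteq\mathbf V$, Lemma~\ref{L: S_alpha in V1} says that the $\gamma$-word $a^+t$ is not a $\gamma$-term for $\mathbf V$. The $\gamma$-class of $a^+t$ is precisely $\{a^kt\mid k\ge2\}$, so there exist $m\ge2$ and a word $\mathbf v$ with $\mathbf V\models a^mt\approx\mathbf v$ where $a^mt$ and $\mathbf v$ are \emph{not} $\gamma$-related. Before analysing $\mathbf v$ I would record two tools. First, substituting $t\mapsto1$ in $xtx\approx xtx^2$ gives $x^2\approx x^3$, hence $\mathbf V\models x^2\approx x^k$ for every $k\ge2$. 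Second, since $t$ is an isoterm, every single letter is an isoterm (variables are interchangeable), so $\mathbf V$ satisfies no nontrivial identity $x\approx\mathbf w$; in particular it fails $x\approx x^2$ and $x\approx1$.

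Next I would pin down the shape of $\mathbf v$. Substituting $a\mapsto1$ in $a^mt\approx\mathbf v$ yields $t\approx\mathbf v|_{a\mapsto1}$, and as $t$ is an isoterm this forces $\mathbf v|_{a\mapsto1}=t$; thus $\mathbf v$ contains exactly one $t$ and no letters besides $a,t$, i.e.\ $\mathbf v=a^{i_0}ta^{i_1}$. Substituting $t\mapsto1$ gives $a^m\approx a^{i_0+i_1}$, and combining this with $x^2\approx x^k$ together with the failure of $x^2\approx x$ and of $x^2\approx1$ forces $i_0+i_1\ge2$. Finally, $a^mt$ and $a^{i_0}ta^{i_1}$ share content $\{a,t\}$ and multiple-letter set $\{a\}$ (here $i_0+i_1\ge2$ makes $a$ multiple in $\mathbf v$), so being $\gamma$-related is equivalent to being $\tau_1$-related; since $a^mt$ has $\tau_1$-pattern $at$ while $a^{i_0}ta^{i_1}$ carries an $a$ after $t$ exactly when $i_1\ge1$, the failure of the $\gamma$-relation forces $i_1\ge1$. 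Hence $\mathbf V\models a^mt\approx a^{i_0}ta^{i_1}$ with $i_1\ge1$ and $i_0+i_1\ge2$.

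It then remains to derive $xtxs\approx xtxsx$ from this identity, using $xtx\approx xtx^2$ and $x^2\approx x^k$. Collapsing all $a$-powers to exponents in $\{1,2\}$, I would split on whether $a$ occurs before $t$ on the right-hand side. If $i_0\ge1$, then an $a$ flanks $t$ on both sides, and absorbing the trailing power via $xtx\approx xtx^2$ reduces the identity (after renaming $a\mapsto x$) to either $x^2t\approx xtx$ or $x^2t\approx x^2tx$. In the first case, renaming $t\mapsto s$ gives $x^2s\approx xsx$ and
\[
xtxs\stackrel{xtx\approx xtx^2}{\approx}xtx^2s\stackrel{x^2s\approx xsx}{\approx}xtxsx;
\]
in the second case $x^2s\approx x^2sx$ gives
\[
xtxs\stackrel{xtx\approx xtx^2}{\approx}xtx^2s\stackrel{x^2s\approx x^2sx}{\approx}xtx^2sx\stackrel{xtx\approx xtx^2}{\approx}xtxsx.
\]

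The delicate case, and the main obstacle, is $i_0=0$, forcing $i_1\ge2$ and the \emph{commutation} identity $x^2t\approx tx^2$ (hence $x^2s\approx sx^2$). Here one cannot create an $xtx$-block on the right, so instead I would reduce both sides to a common word: $xtxs\stackrel{xtx\approx xtx^2}{\approx}xtx^2s\stackrel{x^2s\approx sx^2}{\approx}xtsx^2$, while
\[
xtxsx\stackrel{xtx\approx xtx^2}{\approx}xtx^2sx\stackrel{x^2s\approx sx^2}{\approx}xtsx^2x=xtsx^3\stackrel{x^3\approx x^2}{\approx}xtsx^2.
\]
Since both $xtxs$ and $xtxsx$ collapse to $xtsx^2$, the identity $xtxs\approx xtxsx$ holds in every case, completing the argument.
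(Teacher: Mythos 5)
Your proof is correct and follows essentially the same route as the paper's: both use Lemma~\ref{L: S_alpha in V1} together with the isoterm hypothesis to extract an identity of $\mathbf V$ equating $x^mt$ ($m\ge2$) with a word carrying an occurrence of $x$ after $t$, and then massage it with $xtx\approx xtx^2$. The only difference is that you treat the cases $i_0=0,1,\ge2$ separately, each with its own derivation of $xtxs\approx xtxsx$, whereas the paper compresses them into the single normalized identity $x^2t\approx x^2tx$ and then invokes the equivalence of $xtxs\approx xtxsx$ with $\{xtx\approx xtx^2,\ x^2t\approx x^2tx\}$ --- your extra care is harmless and in fact makes explicit why the paper's claimed form $x^nt\approx x^mtx^k$ with $m\ge2$ is legitimate even when the witnessing identity has no $x$ (or only one $x$) before $t$ on the right-hand side.
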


\begin{proof}  
Since  $\bf V$ does not contain $\mathbb M_\gamma (a^+t)$, the $\gamma$-word  $a^+t$  is not a $\gamma$-term for $\mathbf V$ by Lemma~\ref{L: S_alpha in V1}.
Since $t$ is an isoterm for $\mathbf V$, the variety $\mathbf V$ satisfies  $x^n t \approx x^mtx^k$  for some $n, m\ge 2$ and $k \ge 1$. 
In view of $xtx\approx xtx^2$, the variety $\mathbf V$ satisfies  $x^2 t \approx x^2tx$. 
It is easy to check that  the identity $xtxs \approx xtxsx$ is equivalent to $\{xtx \approx xtx^2, x^2t \approx x^2tx\}$.
\end{proof}

The next lemma is a reformulation of Lemma~4.1 in~\cite{Gusev-JAA} using Theorem~7.2  in~\cite{Sapir-21}.

\begin{lemma}
\label{L: does not contain J} 
Let $\mathbf V$ be a monoid variety such that $\mathbb M_{\lambda}(ata^+,a^+t) \subseteq \mathbf V$ and $\mathbf V \models xtx \approx xtx^2$. 
Then $\mathbf V$ satisfies
\begin{equation}
\label{xtyxsy=xtxyxsy}
xtyxsy \approx xtxyxsy
\end{equation}
whenever ${\bf J} = \mathbb M_\lambda(atba^{+}sb^{+}) \nsubseteq \mathbf V$.\qed
\end{lemma}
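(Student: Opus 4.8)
The plan is to pass from the three monoid-containment hypotheses to statements about $\lambda$-terms, read off from these a rigid normal form for any witnessing identity, and then collapse that form onto the required law using only $xtx\approx xtx^2$. All of $ata^+$, $a^+t$ and $atba^+sb^+$ are $2$-island-limited, so Lemma~\ref{L: S_alpha in V2} turns the hypothesis $\mathbb M_\lambda(ata^+,a^+t)\subseteq\mathbf V$ into the assertion that $ata^+$ and $a^+t$ are $\lambda$-terms for $\mathbf V$, and turns $\mathbf J\nsubseteq\mathbf V$ into the assertion that $atba^+sb^+$ is \emph{not} a $\lambda$-term for $\mathbf V$. (This rephrasing is the content of the remark that the present lemma reformulates Lemma~4.1 of~\cite{Gusev-JAA} via Theorem~7.2 of~\cite{Sapir-21}; once the hypotheses are stated this way one may simply invoke that lemma, but I prefer to outline the mechanism.)

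First I would record the structural consequences. Since $at\mathrel{\le_\lambda}ata^+$ and $ata^+$ is a $2$-island-limited $\lambda$-term, Fact~\ref{F:2-limited} shows $at$ is a $\lambda$-term, i.e. $xy$ is an isoterm for $\mathbf V$; by Lemma~\ref{L: xy isoterm} every identity of $\mathbf V$ fixes the set of multiple letters and the ordered word of simple letters. Because $atba^+sb^+$ is not a $\lambda$-term, $\mathbf V$ satisfies a nontrivial identity $atbasb\approx\mathbf v$ with $\mathbf v$ not $\lambda$-equivalent to $atbasb$; write $\mathbf v=\mathbf v_0\,t\,\mathbf v_1\,s\,\mathbf v_2$ with $\mathbf v_0\mathbf v_1\mathbf v_2\in\{a,b\}^\ast$. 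Projecting the identity onto two-letter alphabets (substitute the omitted letters by $1$) forces $\mathbf v(a,t)$, $\mathbf v(a,s)$, $\mathbf v(b,t)$, $\mathbf v(b,s)$ to be $\lambda$-equivalent to $ata$, $aas$, $tbb$, $bsb$ respectively; each of these projections is, up to renaming of letters, one of $ata^+$, $a^+t$, or a $\mathrel{\le_\lambda}$-factor of $ata^+$, hence a $\lambda$-term by Fact~\ref{F:2-limited}. I would thus deduce that in $\mathbf v$ exactly one $a$ precedes $t$, every $a$ precedes $s$, every $b$ follows $t$, and exactly one $b$ precedes $s$. Therefore $\mathbf v_0=a$, $\mathbf v_2=b^m$ for some $m\ge1$, and $\mathbf v_1=a^pba^q$ with $p+q\ge1$.

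Finally I would collapse $\mathbf v$ using $xtx\approx xtx^2$, which yields $x\mathbf w x^i\approx x\mathbf w x$ for every word $\mathbf w$ and every $i\ge1$ (substitute $t\mapsto\mathbf w$, then $t\mapsto\mathbf w x^{j}$ and iterate); so any maximal run of a repeated letter preceded somewhere earlier by another occurrence of that same letter may be shortened to one occurrence. If $p=0$ then $\mathbf v=at\,ba^qs\,b^m$ (with $q,m\ge1$) already lies in the $\lambda$-class $atba^+sb^+$ of $atbasb$, contradicting the choice of $\mathbf v$; hence $p\ge1$. Shortening the leading run $a^p$ to $a$ gives $\mathbf V\models atbasb\approx ataba^qsb^m$. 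When $q\ge1$, shortening $a^q$ and $b^m$ gives $\mathbf V\models atbasb\approx atabasb$, which is exactly $xtyxsy\approx xtxyxsy$ under $x\mapsto a$, $y\mapsto b$. When $q=0$, shortening $b^m$ gives $\mathbf V\models atbasb\approx atabsb$, i.e. $\mathbf V\models xtyxsy\approx xtxysy$; this case is finished by $xtx\approx xtx^2$, since $xtyxsy\approx xtyx^2sy$ holds by $xtx\approx xtx^2$ while substituting $s\mapsto xs$ into $xtyxsy\approx xtxysy$ gives $xtyx^2sy\approx xtxyxsy$, and chaining the two yields the desired identity.

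The main obstacle is precisely this last stage: one must check that \emph{every} admissible shape of $\mathbf v_1$ collapses onto the single law $xtyxsy\approx xtxyxsy$, and that the deceptively weaker outcome $xtyxsy\approx xtxysy$ does not escape but in fact implies the target in the presence of $xtx\approx xtx^2$. This finitely-many-cases verification is the substance of Lemma~4.1 in~\cite{Gusev-JAA}, and is the part I would expect to consume most of the work.
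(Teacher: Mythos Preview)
Your argument is correct. The paper itself gives no proof here: the lemma is marked with \qed\ and is declared to be a reformulation of Lemma~4.1 in~\cite{Gusev-JAA} via Theorem~7.2 in~\cite{Sapir-21}, so there is nothing to compare at the level of detail. What you have written is precisely the expected unpacking of that citation---translate the containment hypotheses into $\lambda$-term statements using Lemma~\ref{L: S_alpha in V2}, pin down the shape of a witnessing identity by projecting onto two-letter alphabets (using that $ata^+$, $a^+t$ and their $\le_\lambda$-factors are $\lambda$-terms via Fact~\ref{F:2-limited}), and then collapse with $xtx\approx xtx^2$. Your case split on $p,q$ is clean, and the handling of the $q=0$ branch (where one first gets $xtyxsy\approx xtxysy$ and then upgrades it via $xtyxsy\approx xtyx^2sy$ and the substitution $s\mapsto xs$) is exactly the kind of check that the cited lemma absorbs.
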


\begin{lemma} 
\label{L: does not contain K} 
Let $\mathbf V$ be a monoid variety that satisfies the identity $xtx \approx xtx^2$.
If $\mathbf V$ contains $\mathbb M_{\lambda}(ata^+)$ but does not contain ${\bf K} = \mathbb M_\lambda(bta^{+}b^{+})$, then $\mathbf V$ satisfies the identity
\begin{equation} \label{xtyyx=xtyxyx}
xty^2x \approx xty^2xyx.
\end{equation}
\end{lemma}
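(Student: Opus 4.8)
The plan is to show that $\mathbf V$ satisfies the identity $btaab\approx btaabab$: renaming $b\mapsto x$, $a\mapsto y$ turns this into $xty^2x\approx xty^2xyx$, so it is equivalent to the assertion of the lemma. The point is that $btaab$ is a representative of the $\lambda$-word $bta^+b^+$ generating $\mathbf K$, while $btaabab$ represents a strictly larger $\lambda$-class, so the target identity is precisely a \emph{witness} that $bta^+b^+$ fails to be a $\lambda$-term. Accordingly I would first extract \emph{some} such witness from the hypothesis $\mathbf K\nsubseteq\mathbf V$, then pin down its shape, and finally convert it into the canonical identity $btaab\approx btaabab$.

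First I would secure that $xy$ is an isoterm for $\mathbf V$. Since $\mathbf V\models xtx\approx xtx^2$, substituting $t\mapsto1$ gives $x^2\approx x^3$, so Lemma~\ref{L: xy is not an isoterm} applies: if $xy$ were not an isoterm, then $\mathbf V$ would be commutative or idempotent. But $\mathbb M_\lambda(ata^+)\subseteq\mathbf V$ forces $ata^+$, and hence $ata$, to be a $\lambda$-term by Lemma~\ref{L: S_alpha in V2}; a commutative variety satisfies $ata\approx aat$ and an idempotent one satisfies $ata\approx atata$, and in both cases the right-hand side has a different reduced word and so is not $\lambda$-equivalent to $ata$, a contradiction. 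Hence $xy$ is an isoterm, and by Lemma~\ref{L: xy isoterm} every identity of $\mathbf V$ preserves $\mul$ and the word of simple letters.

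Next, $\mathbf K\nsubseteq\mathbf V$ means, since $bta^+b^+$ is $2$-island-limited, that $bta^+b^+$ is not a $\lambda$-term (Lemma~\ref{L: S_alpha in V2}), so $\mathbf V\models btaab\approx\mathbf w$ for some $\mathbf w$ not $\lambda$-equivalent to $btaab$. I would constrain $\mathbf w$ by projection. Because $ta^+\le_\lambda ata^+$ and $ata^+$ is a $\lambda$-term, Fact~\ref{F:2-limited} makes the classes of $taa$ and $ata$ (hence also of $btb$, a renaming of $ata$) into $\lambda$-terms for $\mathbf V$. Deleting $b$ from $btaab\approx\mathbf w$ yields $taa\approx\mathbf w(\{a,t\})$, forcing $\mathbf w(\{a,t\})$ to be $\lambda$-equivalent to $taa$, i.e. all occurrences of $a$ lie to the right of $t$; deleting $a$ yields $btb\approx\mathbf w(\{b,t\})$, forcing exactly one $b$ to the left of $t$. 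As $t$ is the only simple letter, $\mathbf w=bt\mathbf m$ with $\mathbf m\in\{a,b\}^\ast$, $\#_a(\mathbf m)\ge2$, $\#_b(\mathbf m)\ge1$; and since $\mathbf w$ is not $\lambda$-equivalent to $btaab$, the word $\mathbf m$ is not of the form $a^ib^j$.

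Finally comes the crux: converting this witness into $btaab\approx btaabab$. The key tool is that substituting $t\mapsto\mathbf u$ in $xtx\approx xtx^2$ gives $x\mathbf u x\approx x\mathbf u x^2$ for \emph{every} word $\mathbf u$, so inside any word one may freely merge or duplicate a run of a letter $x$ whenever an earlier occurrence of $x$ is available as an anchor; in $bt\mathbf m$ every $b$ after $t$ is anchored by the leading $b$, and every non-initial block of $a$ is anchored by the first $a$. Combining such merges with right-multiplying the witness by short words drives $\mathbf m$ toward $aabab$. For the minimal witness $\mathbf m=baa$, for instance, one has $btaabab=(btaab)(ab)\approx(btbaa)(ab)=btbaaab\approx btbaab=(btbaa)b\approx(btaab)b=btaabb\approx btaab$, where the non-trivial steps apply the witness and the others merge an anchored run; this already yields the target. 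The general case I would treat by induction on $|\mathbf m|$, at each stage using the witness together with anchored merges to pass to a strictly shorter witness, or directly to $btaab\approx btaabab$. I expect this inductive bookkeeping — checking that each reduction preserves the witness shape isolated above and that the required anchors are genuinely present — to be the main obstacle, the isoterm and projection analysis being comparatively routine.
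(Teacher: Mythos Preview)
Your plan coincides with the paper's: both extract a witness identity (the paper writes it as $xty^px^q\approx xt\mathbf a$ with $p\ge2$, $q\ge1$ and $xy$ a factor of $\mathbf a$, which is your $btaab\approx bt\mathbf m$ after renaming) by combining Lemma~\ref{L: S_alpha in V2} with the fact that $ata^+$ is a $\lambda$-term for $\mathbf V$. Where you propose an induction on $|\mathbf m|$ for the conversion step, the paper instead substitutes $t\mapsto ty^2$ into the witness and right-multiplies by $x$, obtaining the one-line chain
\[
xty^2x \;\stackrel{xtx\approx xtx^2}{\approx}\; xty^{2+p}x^{q+1} \;\approx\; xty^2\mathbf a x \;\stackrel{xtx\approx xtx^2}{\approx}\; xty^2xyx,
\]
so the ``inductive bookkeeping'' you anticipate as the main obstacle is avoided entirely by this substitution trick.
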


\begin{proof}
Since $\mathbb M_\lambda(bta^{+}b^{+})  \nsubseteq\mathbf V$, Lemma~\ref{L: S_alpha in V2} implies that  $bta^{+}b^{+}$ is not a $\lambda$-term for $\mathbf V$. 
Since $ata^+$ is a $\lambda$-term for $\mathbf V$, we get that $\mathbf V$ satisfies $xty^px^q\approx xt\mathbf a$ for some $p \ge 2$, $q\ge1$ and some word $\mathbf a\in\{x,y\}^+$ such that $xy$ is a subword of $\mathbf a$. 
Then the identities
\[
xty^2x\stackrel{xtx \approx xtx^2}\approx xty^2y^{p}x^{q}x\approx xty^2\mathbf ax\stackrel{xtx \approx xtx^2}\approx xty^2xyx
\]
hold in $\mathbf V$, and we are done.
\end{proof}

\begin{lemma}
\label{L: does not contain F}
Let $\mathbf V$ be a monoid variety that satisfies $xtx \approx xtx^2$. 
If $\mathbb M_{\lambda}(ata^+) \nsubseteq\mathbf V$, then $\mathbf V$ satisfies $xtx \approx x^2tx$.
\end{lemma}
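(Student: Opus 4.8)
The plan is to first dispose of a degenerate case and then convert the failure of containment into a concrete non-trivial identity that I can manipulate directly.

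First I would record that substituting $t\mapsto1$ into $xtx\approx xtx^2$ yields $x^2\approx x^3$, so $\mathbf V$ satisfies $x^n\approx x^{n+1}$ with $n=2$. Hence I may test whether $xy$ is an isoterm for $\mathbf V$. If $xy$ is \emph{not} an isoterm, then by Lemma~\ref{L: xy is not an isoterm} the variety $\mathbf V$ is commutative or idempotent; in the commutative case $xtx\approx x^2t\approx x^3t\approx x^2tx$ (the middle step using $x^2\approx x^3$, the outer two using commutativity), and in the idempotent case $x\approx x^2$ makes $xtx\approx x^2tx$ immediate. So I may assume from now on that $xy$ is an isoterm for $\mathbf V$.

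Next, since $ata^+$ is a $2$-island-limited $\lambda$-word and $\mathbb M_\lambda(ata^+)\nsubseteq\mathbf V$, Lemma~\ref{L: S_alpha in V2} tells me that $ata^+$ is not a $\lambda$-term for $\mathbf V$. The key preliminary observation is that the $\lambda$-class $ata^+$ consists precisely of the words $ata^j$ with $j\ge1$ (in any other word of content $\{a,t\}$ with $a$ multiple and $t$ simple, the first two occurrences of $a$ are adjacent, so it lies in a different $\lambda$-class). Thus there are $j\ge1$ and a word $\mathbf v$ with $\mathbf V\models ata^j\approx\mathbf v$ but $ata^j\not\mathrel\lambda\mathbf v$. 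Because $xy$ is an isoterm, Lemma~\ref{L: xy isoterm} forces $\con(\mathbf v)=\{a,t\}$ with $t$ simple and $a$ multiple, so $\mathbf v=a^pta^q$ with $p,q\ge0$ and $p+q\ge2$; moreover $xtx\approx xtx^2$ lets me replace $ata^j$ by $ata$, giving $\mathbf V\models ata\approx a^pta^q$ with $ata\not\mathrel\lambda a^pta^q$. The $\lambda$-inequivalence says exactly that the first two occurrences of $a$ are adjacent in $a^pta^q$ though not in $ata$, which happens precisely when $p\ge2$, or when $p=0$ and $q\ge2$.

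Finally, in each surviving case I would simplify using $x^2\approx x^3$ (so $a^r\approx a^2$ for $r\ge2$) together with $xtx\approx xtx^2$ (so $ata^r\approx ata$ for $r\ge1$) and conclude $xtx\approx x^2tx$: if $p\ge2$ and $q\ge1$, then $a^pta^q\approx a\cdot ata^q\approx a\cdot ata=a^2ta$, giving $ata\approx a^2ta$ at once; if $p\ge2$ and $q=0$, then $ata\approx a^2t$, and multiplying on the right by $a$ gives $ata^2\approx a^2ta$, whence $ata\approx a^2ta$; and if $p=0$ and $q\ge2$, then $ata\approx ta^2$, and multiplying on the left by $a$ gives $a^2ta\approx ata^2\approx ata$. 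Renaming $a,t$ as $x,t$ yields $xtx\approx x^2tx$. I expect the main obstacle to be the bookkeeping in the middle paragraph — correctly identifying the $\lambda$-class of $ata^+$ and determining exactly which forms $a^pta^q$ are $\lambda$-inequivalent to $ata$ — rather than the closing identity manipulations, which are routine.
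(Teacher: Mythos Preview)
Your proposal is correct and follows essentially the same route as the paper: first eliminate the case where $xy$ is not an isoterm via Lemma~\ref{L: xy is not an isoterm}, then use Lemma~\ref{L: S_alpha in V2} to obtain a non-$\lambda$-preserving identity $ata^k\approx a^pta^q$ and reduce it to $xtx\approx x^2tx$ using $xtx\approx xtx^2$. The paper compresses your three-case reduction into the single assertion that one obtains an identity $xtx^k\approx x^ptx^q$ with $k,q\ge1$ and $p\ge2$; your more explicit enumeration (including the boundary cases $q=0$ and $p=0$) is in fact a cleaner justification of that step.
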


\begin{proof} 
If $xy$ is not an isoterm for $\mathbf V$, then $\mathbf V$ is commutative or idempotent by Lemma~\ref{L: xy is not an isoterm}. 
In either case, $\mathbf V$ satisfies $xtx \approx x^2tx$ because $xtx\stackrel{xtx \approx xtx^2}\approx xtx^2\stackrel{\text{comm.}}\approx x^2tx$ and $xtx\stackrel{\text{idemp.}}\approx x^2tx$. So, we may assume that $xy$ is an isoterm for $\mathbf V$.

Since $\mathbb M_{\lambda}(ata^+) \nsubseteq\mathbf V$, Lemma~\ref{L: S_alpha in V2} implies that $ata^+$ is not a $\lambda$-term for $\mathbf V$. 
Then in view of Lemma~\ref{L: xy isoterm}, the variety $\mathbf V$ satisfies an identity of the form $xtx^k \approx x^ptx^q$, where $k,q\ge1$ and $p\ge2$. 
This identity is equivalent modulo $xtx \approx xtx^2$ to $xtx \approx x^2tx$, and we are done.
\end{proof}

\begin{lemma} 
\label{L: does not contain A}  
Let $\mathbf V$ be a monoid variety that satisfies the identities $xtx\approx xtx^2\approx x^2tx$. 
If $\mathbf A^1 \nsubseteq\mathbf V$, then $\mathbf V \models xy^2tx \approx (xy)^2tx$.
\end{lemma}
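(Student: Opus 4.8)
The plan is to treat the degenerate cases first and then squeeze a single non-trivial identity out of the assumption $\mathbf A^1\nsubseteq\mathbf V$ by realizing $\mathbf A^1$ as a monoid of the form $M_\gamma(\cdot)$. Substituting $t\mapsto 1$ into $xtx\approx xtx^2$ shows that $\mathbf V\models x^2\approx x^3$, so Lemma~\ref{L: xy is not an isoterm} applies: if $xy$ is not an isoterm for $\mathbf V$, then $\mathbf V$ is commutative or idempotent. In the commutative case $xy^2tx\approx x^2y^2t$ and $(xy)^2tx\approx x^3y^2t$, and these agree because $x^2\approx x^3$; in the idempotent case every element is idempotent, so $(xy)^2\approx xy$ and $(xy)^2tx\approx xytx\approx xy^2tx$. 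Either way the required identity holds, and from now on I would assume that $xy$ (hence also $x$) is an isoterm for $\mathbf V$.

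The second step is to extract a witness. By the results of~\cite{Sapir-21} one has $\mathbf A^1=\mathbb M_\gamma(a^+b^+ta^+)$, and under $a\mapsto x$, $b\mapsto y$ the word $xy^2tx$ is a representative of the $\gamma$-word $a^+b^+ta^+$. Since $\mathbf A^1\nsubseteq\mathbf V$, Lemma~\ref{L: S_alpha in V1} shows that $a^+b^+ta^+$ is not a $\gamma$-term for $\mathbf V$, so $\mathbf V$ satisfies a non-trivial identity $xy^2tx\approx\mathbf w$ in which $\mathbf w$ lies in a $\gamma$-class different from that of $xy^2tx$. Because $xy$ is an isoterm, Lemma~\ref{L: xy isoterm} forces $\con(\mathbf w)=\{x,y,t\}$, $\mul(\mathbf w)=\{x,y\}$ and $\mathbf w(\simp(\mathbf w))=t$, so $\mathbf w=\mathbf p\,t\,\mathbf q$ with $\mathbf p,\mathbf q\in\{x,y\}^\ast$ and both $x,y$ multiple; as the two sides already agree on content, on multiple letters and on the simple projection, the only way they can sit in different $\gamma$-classes is that deleting consecutive repetitions from $\mathbf p\,t\,\mathbf q$ does not return $xytx$.

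The crux, and the step I expect to be the main obstacle, is to turn this deviation into the target identity. The main engine is that, substituting for $t$ in $xtx\approx xtx^2\approx x^2tx$, one gets $x^2\mathbf a x\approx x\mathbf a x\approx x\mathbf a x^2$ for every word $\mathbf a$; in particular any maximal block lying between two occurrences of one letter may be shortened, and $y^2xy\approx yxy$ holds. Using these reductions I would normalize $\mathbf p\,t\,\mathbf q$ and run a short case analysis on the resulting shape: the shapes in which some letter occurs on only one side of $t$, or in which no occurrence of $x$ intrudes among the $y$'s, should either contradict the fact that $x$ and $xy$ are isoterms or collapse, while the remaining possibilities should yield the identity $xy^2tx\approx xy^2xtx$. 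Granting this, the proof finishes cleanly: substituting $t\mapsto yt$ into $xy^2tx\approx xy^2xtx$ gives $xy^3tx\approx xy^2xytx$, and then $xy^3tx\approx xy^2tx$ (as $y^2\approx y^3$) while $xy^2xytx\approx xyxytx$ (applying $y^2xy\approx yxy$ inside the block), so that $xy^2tx\approx xyxytx=(xy)^2tx$. The difficulty is organizing the case analysis so that every admissible witness $\mathbf w$ is driven down to $xy^2tx\approx xy^2xtx$; I expect the assumptions that $x$ and $xy$ are isoterms to be exactly what eliminates the shapes that would not reduce.
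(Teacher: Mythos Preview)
Your plan is essentially the paper's: realize $\mathbf A^1=\mathbb M_\gamma(a^+b^+ta^+)$, apply Lemma~\ref{L: S_alpha in V1} to produce a witness $xy^2tx\approx\mathbf w$ with $\mathbf w$ outside the $\gamma$-class of $xy^2tx$, and case-analyse the shape of $\mathbf w$. The paper organises the split more cleanly by first asking whether $\mathbb M_\gamma(a^+t,ta^+)\subseteq\mathbf V$. If not, one of $x^2t\approx xtx$ or $tx^2\approx xtx$ holds and the conclusion follows at once. If so, then $a^+t$ and $ta^+$ are $\gamma$-terms, which forces $\mathbf w=\mathbf a\,t\,x$ with $\mathbf a\in\{x,y\}^+$ containing $yx$ as a factor; the paper then finishes in one line:
\[
xy^2tx \;\approx\; x^2y^3tx \;\approx\; x\mathbf a y\,tx \;\approx\; (xy)^2tx,
\]
the last step using only $xtx\approx xtx^2\approx x^2tx$ and $(xy)^2\approx(xy)^3$ to collapse $x\mathbf a y$ to $(xy)^2$.

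The genuine gap in your outline is the intermediate target. In the residual shape---take for instance $\mathbf w=yxy\,tx$---the available identities give $xy^2tx\approx(xy)^2tx$ directly (pre-multiply the witness by $x$ and reduce), which is already the conclusion of the lemma; they do \emph{not} obviously yield $xy^2tx\approx xy^2xtx$. Note that $xy^2xtx$ and $(xy)^2tx$ lie in distinct $\gamma$-classes, and I do not see a derivation of the former from the latter over $\{xtx\approx xtx^2\approx x^2tx\}$. So your finishing chain via $t\mapsto yt$ is correct but superfluous, and insisting that every branch be driven down to $xy^2xtx$ will stall precisely at the step you flag as the main obstacle. Aim for $(xy)^2tx$ instead: in the ``collapse'' branches you already reach it through $x^2t\approx xtx$ or $tx^2\approx xtx$, and in the residual branch the one-line computation above (substitute $t\mapsto yt$ into the witness, sandwich with $x$ on the left, then reduce) does the job without detour.
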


\begin{proof}  
If $x$ is not an isoterm for $\mathbf V$, then $\mathbf V$ satisfies $x \approx x^2$ and consequently, $xy^2tx \approx (xy)^2tx$.
So, let us assume that $x$ is an isoterm for $\mathbf V$ and consider two cases.

\smallskip

{\bf Case 1}:  $\bf V$ does not contain $\mathbb M_\gamma (a^+t, ta^+)$. 
According to Lemma~\ref{L: S_alpha in V1}, either $a^+t$ or $ta^+$ is not a $\gamma$-term for $\mathbf V$. 
Then the variety $\mathbf V$ satisfies either $x^n t \approx x^mtx^k$ or  $tx^n  \approx x^mtx^k$ for some $n\ge 2$ and $m,k \ge 1$. 
In view of $xtx\approx xtx^2\approx x^2tx$, the variety $\mathbf V$ satisfies either $x^2 t \approx xtx$ or $tx^2  \approx xtx$. 
Each of these identities implies $xy^2tx \approx (xy)^2tx$.

\smallskip

{\bf Case 2}:  $\bf V$ contains $\mathbb M_\gamma (a^+t, ta^+)$.
According to Theorem~4.3(iii) in~\cite{Sapir-21}, we have $\mathbf A^1 =   \mathbb M_{\gamma}(a^+b^+ta^+)$.
Then Lemma~\ref{L: S_alpha in V1} implies that  $a^+ b^+ta^+$ is not a $\gamma$-term for $\mathbf V$.
Since $a^+t$ and $ta^+$ are $\gamma$-terms for $\mathbf V$, the variety $\mathbf V$ satisfies $xy^2tx\approx \mathbf atx$ for some ${\bf a} \in \{x,y\}^+$ such that $\bf a$ contains $yx$ as a subword. 
Then
\[
\mathbf V\models xy^2tx\stackrel{xtx \approx x^2tx}\approx x^2y^3tx\approx x\mathbf aytx\stackrel{xtx \approx xtx^2 \approx x^2tx}\approx (xy)^2tx,
\]
and we are done.
\end{proof}

\begin{sortlemma}
\label{SL: 2}
Let $\mathbf V$ be a variety of aperiodic monoids. 
Then either $\mathbf V$ is HFB or one of the following holds:
\begin{itemize}
\item[\textup{(i)}] $\mathbf V$ contains  one of the varieties $\mathbf A^1\vee\overleftarrow{\mathbf A^1}$, $\mathbf J$, $\overleftarrow{\mathbf J}$, $\mathbf K$, $\overleftarrow{\mathbf K}$, $\mathbf L$ or $\mathbf M$;
\item[\textup{(ii)}] $\mathbf V$ satisfies either  $\{xtx \approx xtx^2, xy^2tx \approx (xy)^2tx\}$  or dually, \\ $\{xtx \approx x^2tx, xty^2x  \approx xt(yx)^2\}$;
\item[\textup{(iii)}] $\mathbf V$ satisfies either  $xtxs \approx xtxsx$  or dually, $txsx \approx xtxsx$.\qed
\end{itemize}
\end{sortlemma}

\begin{proof} 
Suppose that $\mathbf V$ is not HFB and does not contain any of the varieties $\mathbf A^1\vee\overleftarrow{\mathbf A^1}$, $\mathbf J$, $\overleftarrow{\mathbf J}$, $\mathbf K$, $\overleftarrow{\mathbf K}$, $\mathbf L$ or $\mathbf M$. 
Sorting Lemma~\ref{SL: 1} implies that $\mathbf V$ satisfies $xtx \approx xtx^2$ or $xtx \approx x^2tx$. 
By symmetry, we may assume that $\mathbf V$ satisfies $xtx \approx xtx^2$. 
If $x$ is not an isoterm for $\mathbf V$, then $\mathbf V$ is idempotent and consequently, satisfies $xy^2tx \approx (xy)^2tx$. So, we may assume that $x$ is an isoterm for $\mathbf V$.  
If $a^+t$ is not a $\gamma$-term for $\mathbf V$, then $\mathbf V$ satisfies $xtxs \approx xtxsx$ by Lemma~\ref{L: does not contain E}. 
So, we may assume that $a^+t$ is a $\gamma$-term for $\mathbf V$.
Consider two cases.

\smallskip

{\bf Case 1}: $\bf V$ contains $\mathbb M_{\lambda}(ata^+)$. 
Since $\bf V$ does not contain ${\bf J}$,  Lemma~\ref{L: does not contain J} implies that $\mathbf V \models~\eqref{xtyxsy=xtxyxsy}$.  
Erasing $s$ from~\eqref{xtyxsy=xtxyxsy} we obtain 
\begin{equation} 
\label{xtyxy=xtxyxy}
xtyxy \approx xt(xy)^2.
\end{equation}
Since $\mathbf K\nsubseteq\mathbf V$, the variety $\mathbf V$ satisfies~\eqref{xtyyx=xtyxyx} by Lemma~\ref{L: does not contain K}. 
Consequently, $\mathbf V$ satisfies
\[
xsytyx\stackrel{xtx \approx xtx^2}\approx xsyty^2x \stackrel{\eqref{xtyyx=xtyxyx}} \approx xsyty^2xyx\stackrel{xtx \approx xtx^2}\approx xsyt(yx)^2 \stackrel{\eqref{xtyxy=xtxyxy}}\approx xsytxyx
\]
Hence $\mathbf V$ is HFB by Proposition~\ref{P: hfb}.

\smallskip

{\bf Case 2}:  $\bf V$ does not contain $\mathbb M_{\lambda}(ata^+)$. 
In this case, Lemma~\ref{L: does not contain F} implies that $\mathbf V \models xtx \approx x^2tx$.  
Since $\mathbf A^1\vee\overleftarrow{\mathbf A^1}\nsubseteq\mathbf V$,  Lemma~\ref{L: does not contain A} and its dual imply that $\mathbf V$ satisfies one of the following identities
\[
xty^2x \approx xt(yx)^2\ \text{ or }\ xy^2tx \approx (xy)^2tx,
\]
we are done.
\end{proof}

\begin{corollary} 
\label{C: main} 
Let $\mathbf V$ be a variety of $\mathscr J$-trivial monoids. 
Then either $\mathbf V$ is HFB or $\mathbf V$ contains one of the varieties $\mathbf A^1\vee\overleftarrow{\mathbf A^1}$, $\mathbf J$, $\overleftarrow{\mathbf J}$, $\mathbf K$, $\overleftarrow{\mathbf K}$, $\mathbf L$ or $\mathbf M$.
\end{corollary}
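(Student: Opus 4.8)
The plan is to feed everything into Sorting Lemma~\ref{SL: 2} and then use $\mathscr J$-triviality to kill its last two alternatives. Since $\mathbf V$ is a variety of $\mathscr J$-trivial monoids, Fact~\ref{F: J-trivial} tells us that $\mathbf V$ is aperiodic and satisfies $(xy)^n\approx(yx)^n$ for some $n$, so Sorting Lemma~\ref{SL: 2} applies. If $\mathbf V$ is HFB or falls under alternative~(i), there is nothing to prove, so I would assume that $\mathbf V$ excludes all seven varieties and lands in alternative~(ii) or~(iii). In both of these $\mathbf V\models xtx\approx xtx^2$ (up to the left--right duality that we exploit throughout), and putting $t\mapsto1$ gives $x^2\approx x^3$. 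Substituting $z=xy$ into $x^2\approx x^3$ yields $(xy)^2\approx(xy)^n$, so the commuting power law collapses to the \emph{content-preserving} identity $(xy)^2\approx(yx)^2$, which I call $(\ast)$. This identity is precisely the extra ingredient that the purely aperiodic Sorting Lemma does not have, and the whole point is to use it to push alternatives~(ii),(iii) into the hypotheses $xtx\approx xtx^2$, \eqref{xyytx=xyyxtx} of Corollary~\ref{C: hfb1} (or its dual).

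Alternative~(ii) is the clean case. Take its first branch $\{xtx\approx xtx^2,\ xy^2tx\approx(xy)^2tx\}$; the dual branch is symmetric and will be closed by the dual of Corollary~\ref{C: hfb1}. Here $\mathbf V$ in fact satisfies \emph{both} normalizations $xtx\approx xtx^2$ and $xtx\approx x^2tx$: this alternative is produced in Case~2 of the proof of Sorting Lemma~\ref{SL: 2}, where $\mathbb M_\lambda(ata^+)\nsubseteq\mathbf V$ and so Lemma~\ref{L: does not contain F} supplies $(\dagger)\colon xtx\approx x^2tx$. Writing $(B)$ for $xy^2tx\approx(xy)^2tx$ and $(B')$ for its $t\mapsto1$ consequence $xy^2x\approx(xy)^2x$, the target identity~\eqref{xyytx=xyyxtx} is then obtained from $(B),(B'),(\ast),(\dagger)$ by the chain
\[
xy^2tx\stackrel{(B)}{\approx}xyxytx\stackrel{(\ast)}{\approx}yxyxtx\stackrel{(\dagger)}{\approx}yxyx^2tx\stackrel{(\ast)}{\approx}xyxyxtx\stackrel{(B')}{\approx}xy^2xtx,
\]
where at each $(\ast)$-step I rewrite the leading $(yx)^2$ or $(xy)^2$, and the $(\dagger)$-step duplicates the $x$ sitting immediately before $t$. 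Corollary~\ref{C: hfb1} now gives that $\mathbf V$ is HFB.

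Alternative~(iii), namely $xtxs\approx xtxsx$ (equivalently $\{xtx\approx xtx^2,\ x^2t\approx x^2tx\}$), is where I expect the real difficulty, and it is the step I would flag as the main obstacle. The reason is that, in contrast with~(ii), the commuting law $(\ast)$ together with $xtx\approx xtx^2$ and $x^2t\approx x^2tx$ does \emph{not} by itself entail~\eqref{xyytx=xyyxtx}: passing from $xy^2tx$ to $xy^2xtx$ inserts an \emph{isolated} extra $x$ before $t$, a move that the content-preserving identity $(\ast)$ cannot make and that the two available ``adjacent-duplication'' identities $xtx\approx xtx^2$, $x^2t\approx x^2tx$ also cannot make. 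Thus the missing normalization $xtx\approx x^2tx$ must be recovered, and here one cannot simply cite Case~2 of Sorting Lemma~\ref{SL: 2}, since alternative~(iii) is produced earlier (via Lemma~\ref{L: does not contain E}, where only $xtx\approx xtx^2$ is on hand). My plan for this case is therefore to re-enter the $\gamma$-term analysis under the standing hypothesis that $\mathbf V$ excludes all seven varieties: I would show $\mathbb M_\lambda(ata^+)\nsubseteq\mathbf V$, apply Lemma~\ref{L: does not contain F} to obtain $xtx\approx x^2tx$, and then run a computation of the same flavour as the displayed chain, driven by $(\ast)$, to reach~\eqref{xyytx=xyyxtx} and conclude HFB through Corollary~\ref{C: hfb1}.

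Putting the pieces together, every $\mathscr J$-trivial variety excluding $\mathbf A^1\vee\overleftarrow{\mathbf A^1}$, $\mathbf J$, $\overleftarrow{\mathbf J}$, $\mathbf K$, $\overleftarrow{\mathbf K}$, $\mathbf L$ and $\mathbf M$ either is caught by the ``HFB'' conclusion of Sorting Lemma~\ref{SL: 2} outright or is shown HFB in alternative~(ii) or~(iii) as above, with the dual branches settled by the dual of Corollary~\ref{C: hfb1}; this is exactly the dichotomy asserted in Corollary~\ref{C: main}.
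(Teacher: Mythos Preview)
Your treatment of alternative~(ii) is essentially the paper's argument: derive $(xy)^2\approx(yx)^2$ from $\mathscr J$-triviality, turn $xy^2tx\approx(xy)^2tx$ into~\eqref{xyytx=xyyxtx}, and invoke Corollary~\ref{C: hfb1}. (Your detour through $xtx\approx x^2tx$ via the internals of Sorting Lemma~\ref{SL: 2} is unnecessary---the paper's chain uses only $xtx\approx xtx^2$ at the step $(yx)^2tx\approx(yx)^2xtx$---but it is not wrong.)

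The gap is alternative~(iii), and it is not a matter of missing details: your intended route through Corollary~\ref{C: hfb1} cannot succeed. Take $\mathbf V=\mathbb M_\lambda(ata^+)=\mathbf F=\var\{xtx\approx xtx^2,\ x^2t\approx x^2tx,\ x^2y^2\approx y^2x^2\}$. This is a variety of $\mathscr J$-trivial monoids (it sits inside $\mathbf K$; see Fig.~\ref{pic: L(K)}), it satisfies $xtxs\approx xtxsx$ and $(xy)^2\approx(yx)^2$, and it excludes all seven listed varieties (being HFB). Yet $\mathbf F\not\models xy^2tx\approx xy^2xtx$: by the solution of the word problem for $\mathbf F$ (Proposition~6.9 of~\cite{Gusev-Vernikov}, quoted in Lemma~\ref{L: word problem A_0^1 vee M_lambda(ata^+)}), the simple letter $t$ precedes $_2x$ in $xy^2tx$ but follows $_2x$ in $xy^2xtx$. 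Hence the hypotheses available in alternative~(iii) do not imply~\eqref{xyytx=xyyxtx}, and your proposed step ``show $\mathbb M_\lambda(ata^+)\nsubseteq\mathbf V$'' already fails for $\mathbf V=\mathbf F$. Using ``$\mathbf V$ is not HFB'' to rule out $\mathbf V=\mathbf F$ would require knowing that $\mathbf F$ is HFB, which is precisely the point at issue.

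The paper closes alternative~(iii) by a completely different mechanism: it observes that $\var\{xtxs\approx xtxsx,\ (xy)^2\approx(yx)^2\}=\var\{xtx\approx xtx^2,\ x^2t\approx x^2tx,\ x^2y^2\approx y^2x^2\}$ and then cites Proposition~6.1 of~\cite{Gusev-Vernikov}, which asserts that this variety is HFB. So Corollary~\ref{C: hfb1} is not the tool here; an external HFB result is.
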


\begin{proof} 
Suppose that $\mathbf V$ is not HFB and does not contain any of the varieties $\mathbf A^1\vee\overleftarrow{\mathbf A^1}$, $\mathbf J$, $\overleftarrow{\mathbf J}$, $\mathbf K$, $\overleftarrow{\mathbf K}$, $\mathbf L$ or $\mathbf M$. 
In view of Sorting Lemma~\ref{SL: 2}, two cases are possible.

\smallskip

{\bf Case 1}: $\mathbf V$ satisfies $\{xtx \approx xtx^2, xy^2tx \approx (xy)^2tx\}$ or  $\{xtx \approx x^2tx, xty^2x  \approx xt(yx)^2\}$.
By symmetry, we may assume that the first of these identity system holds in $\mathbf V$.
Since $\mathbf V$ is $\mathscr J$-trivial, $\mathbf V$ satisfies $(xy)^2\approx (yx)^2$  by  Fact~\ref{F: J-trivial}.
Hence, $\mathbf V$ satisfies
\[
xy^2tx \approx (xy)^2tx \stackrel{(xy)^2 \approx (yx)^2}\approx (yx)^2tx\stackrel{xtx \approx xtx^2}\approx (yx)^2xtx \stackrel{(xy)^2 \approx (yx)^2}\approx (xy)^2xtx\approx xy^2xtx.
\]
So, $\mathbf V$ satisfies $xy^2tx \approx xy^2xtx$. Consequently, $\mathbf V$ is HFB by Corollary~\ref{C: hfb1}.

\smallskip

{\bf Case 2}: $\mathbf V$ satisfies either $xtxs \approx xtxsx$  or  $txsx \approx xtxsx$.
Since $\mathbf V$ is $\mathscr J$-trivial, $\mathbf V$ satisfies $(xy)^2\approx (yx)^2$  by  Fact~\ref{F: J-trivial}.
By symmetry, we may assume that the first of these identities holds in $\mathbf V$.
It is routine to verify that 
\[\var\{xtxs \approx xtxsx, (xy)^2\approx (yx)^2\} = \var\{xtx \approx xtx^2, x^2t \approx x^2tx, x^2y^2 \approx y^2x^2\}.
\]
This variety is HFB by Proposition~6.1 in~\cite{Gusev-Vernikov}.
\end{proof}

\section{The last two examples of limit varieties of\\ $\mathscr J$-trivial monoids}
\label{new example}

A monoid $M$ is said to be \textit{non-finitely based} if $\var M$ is NFB. 
As in Section~\ref{suf cond}, we use $_{i{\bf u}}x$ to refer to the $i$th from the left occurrence of $x$ in a word ${\bf u}$. 
We use $_{\ell {\bf u}}x$ to refer to the last occurrence of $x$ in ${\bf u}$.  
If $x$ is simple in $\bf u$ then we use $_{{\bf u}}x$ to denote the only occurrence of $x$ in $\bf u$.
If  the $i$th occurrence of $x$ precedes  the $j$th occurrence of $y$ in a word $\bf u$, we write $({_{i{\bf u}}x}) <_{\bf u} ({_{j{\bf u}}y})$. If  $\mathbf u=\xi(\mathbf s)$ for some endomorphism $\xi$ of $\mathfrak A^*$ and $_{i{\bf u}}x$ is an occurrence of a letter $x$ in $\bf u$ then  $\xi^{-1}_{\bf s}({_{i{\bf u}}x})$ denotes an occurrence  ${_{j{\bf u}}z}$ of a letter $z$ in $\bf s$ such that $\xi({_{j{\bf u}}z})$ regarded as a subword of $\bf u$ contains $_{i{\bf u}}x$.

\begin{sufcon}
\label{SC: for K}
Let $M$ be a monoid such that $M$ satisfies the identity
\begin{equation}
\label{long identity}
{\mathbf u}_n=x y_1^2y_2^2\cdots y^2_{n-1} y_n^2x\approx x y_1^2y_2^2\cdots y^2_{n-1}y_nxy_n={\mathbf v}_n
\end{equation}
for any $n\ge1$.  If the $\lambda$-word $bta^+b^+$ is a $\lambda$-term for $M$ then $M$ is NFB.
\end{sufcon}

\begin{proof}
Let $\bf u$ be a word such that $M \models {\mathbf u}_n \approx {\mathbf u}$ and $({_{\ell{\bf u}}y_n}) <_{\bf u} ({_{2{\bf u}}x})$.
Since the last occurrence of $y_n$ succeeds the second occurrence of $x$ in ${\mathbf v}_n$, in view of Fact~2.1 in~\cite{Sapir-15N}, to show that $M$ is NFB, it suffices to establish that if the identity $\mathbf u\approx \mathbf v$ is directly deducible from some identity $\mathbf s\approx\mathbf t$ of $M$  in less than $n-2$ variables, i.e. $\mathbf u=\mathbf a\xi(\mathbf s)\mathbf b$ and $\mathbf v=\mathbf a\xi(\mathbf t)\mathbf b$ for some words $\mathbf a,\mathbf b\in\mathfrak A^\ast$ and some endomorphism $\xi$ of $\mathfrak A^\ast$, then $({_{\ell{\bf v}}y_n}) <_{\bf v} ({_{2{\bf v}}x})$.

We note that if $a,b\notin\con(\mathbf s\mathbf t)$ then the identity $\mathbf s\approx \mathbf t$ is equivalent to $a\mathbf sb\approx a\mathbf tb$. 
Then there exists an endomorphism $\zeta$ of $\mathfrak A^\ast$ such that $\zeta(a)=\mathbf a$, $\zeta(b)=\mathbf b$, $\zeta(\mathbf s)=\xi(\mathbf s)$ and $\zeta(\mathbf t)=\xi(\mathbf t)$.  
It follows that we may assume without any loss that $\mathbf a=\mathbf b=1$ and so $\mathbf u=\xi(\mathbf s)$ and $\mathbf v=\xi(\mathbf t)$, and $\mathbf s\approx\mathbf t$ is an identity of $M$ in less than $n$ variables.

Since $a^+b^+$ is a $\lambda$-term for $M$ by Fact~\ref{F:2-limited}, we have:
\begin{align} 
\label{letters in u}
&({_{1{\bf u}}x})   <_{\bf u} ({_{\ell{\bf u}}y_1})  <_{\bf u} ({_{1{\bf u}}y_2}) <_{\bf u} ({_{\ell{\bf u}}y_2}) <_{\bf u} \dots <_{\bf u} ({_{1{\bf u}}y_n}) <_{\bf u} ({_{\ell{\bf u}}y_n})  <_{\bf u} ({_{2{\bf u}}x}).
\end{align}
Clearly, the word $ab$ is an isoterm for $M$. Then $\simp(\mathbf s)=\simp(\mathbf t)$, $\mul(\mathbf s)=\mul(\mathbf t)$, $\simp(\mathbf u)=\simp(\mathbf v)$ and $\mul(\mathbf u)=\mul(\mathbf v)$ by Lemma~\ref{L: xy isoterm}. 
We use these facts below without any reference.

Working toward a contradiction, suppose that $({_{2{\bf v}}x}) <_{\bf v} ({_{\ell{\bf v}}y_n})$.
Then $(\xi^{-1}_{\bf t}({_{2{\bf v}}x})) \le_{\bf t} (\xi^{-1}_{\bf t}({_{\ell{\bf v}}y_n}))$, where  $\xi^{-1}_{\bf t}({_{2{\bf v}}x})$ is either the first or the second occurrence of some letter $z$ in $\bf t$ and $\xi^{-1}_{\bf t}({_{\ell{\bf v}}y_n})$ is the last occurrence of some letter $y$ in $\bf v$.
If $y=z$, then $y=z\in\simp(\mathbf s)$ because $\xi(y)=\xi(z)$ contains both $x$ and $y_n$ and  would otherwise appear at least twice in $\bf u$ as a subword, contradicting~\eqref{letters in u}. 
Then, in view of~\eqref{letters in u}, we have $_{\bf s}y={_{\bf s}z}=\xi^{-1}_{\bf s}({_{1{\bf u}}x})$ and ${_{1{\bf t}}z'}=\xi^{-1}_{\bf t}({_{1{\bf v}}x})$ is not an occurrence of $y=z$ in $\mathbf t$. 
Clearly, $({_{1{\bf t}}z'} ) <_{\bf t} ({_{{\bf t}}z})$ but $({_{{\bf s}}z} ) <_{\bf s} ({_{1{\bf s}}z'})$. 
This is impossible, because all the $\lambda$-words in $\{b^+a^+\}^{\le_\lambda}$ are $\lambda$-terms for $M$ by Fact~\ref{F:2-limited}.
So, we may assume that $y \ne z$.
{\sloppy

}
Since all the $\lambda$-words in $\{b^+a^+\}^{\le_\lambda}$ are $\lambda$-terms for $M$, $\mathbf s(z,y)\ne y^iz^j$ for any $i,j\ge0$. Hence $({_{1{\bf s}}z} ) <_{\bf s} ( _{\ell \bf s}y)$.  Using  \eqref{letters in u} and that  $\xi(y)$ contains $y_n$, we obtain: 
 \[({_{1{\bf s}}z} ) <_{\bf s} ( _{\ell \bf s}y) \le_{\bf s}  (\xi^{-1}_{\bf s}({_{\ell{\bf u}}y_n}))  \le_{\bf s}  (\xi^{-1}_{\bf s}({_{2{\bf u}}x})).\]
Since  $\xi(z)$ contains $x$ and  $(\xi^{-1}_{\bf s}({_{1{\bf u}}x}))  \le_{\bf s} ( \xi^{-1}_{\bf s}({_{\ell{\bf u}}y_n}))$ 
by~\eqref{letters in u}, we conclude that  
\begin{equation} 
\label{z in s}
\xi^{-1}_{\bf s}({_{1{\bf u}}x}) = {_{1{\bf s}}z},~ x \in \simp(\xi(z)).
\end{equation}
In view of~\eqref{letters in u} and  the fact that $\bf s$ involves less than $n$ letters, the word $\bf s$ has some letter $c$ such that $\xi(c)$ contains $y_i y_{i+1}$ as a subword for some $1\le i <n$. 
The letter $c$ is simple in $\bf s$ because for each  $1\le i <n$ the word $y_i y_{i+1}$ appears only once in $\bf u$. Using~\eqref{letters in u},~\eqref{z in s}  and that  $\xi(y)$ contains $y_n$, we obtain: 
\begin{equation} 
\label{letters in s}
({_{1{\bf s}}z} ) \le_{\bf s} ( _{\bf s}c)  \le_{\bf s}  (\xi^{-1}_{\bf s}({_{1{\bf u}}y_n})) \le_{\bf s} ({_{1{\bf s}}y} ) \le_{\bf s} ( _{\ell \bf s}y) \le_{\bf s}  (\xi^{-1}_{\bf s}({_{\ell{\bf u}}y_n}))  \le_{\bf s} ( \xi^{-1}_{\bf s}({_{2{\bf u}}x})).
\end{equation}
Two cases are possible.

\smallskip

{\bf Case 1}:  $\xi^{-1}_{\bf t}({_{2{\bf v}}x}) = {_{2{\bf t}}z}$.

In this case, $z$ is multiple in $\bf t$. 
Then $(\xi^{-1}_{\bf s}({_{2{\bf u}}x}))  \le_{\bf s} ({_{2{\bf s}}z} )$, because $\xi(z)$ contains $x$.
Since $c$ is simple in $\bf t$, $z \ne c$.
Using~\eqref{letters in s} and that $z \ne y$ we obtain:
\[
({_{1{\bf s}}z} ) <_{\bf s} ( _{\bf s}c)  \le_{\bf s}   ( {_{1{\bf s}}y})  \le_{\bf s}   ( {_{\ell{\bf s}}y})  <_{\bf s} ({_{2{\bf s}}z}).
\]
Since $bta^+b^+$ is a $\lambda$-term for $M$, it is easily to see that $btb^+$ and $btab^+$ are $\lambda$-terms  for $M$ too. 
Then we have:
\[
({_{1{\bf t}}z} ) <_{\bf t} ( _{\bf t}c) \le_{\bf t}   ( {_{1{\bf t}}y})   \le_{\bf t}   ( {_{\ell{\bf t}}y})  <_{\bf t} ({_{2{\bf t}}z}).
\] 
Our assumption that $({_{2{\bf v}}x}) <_{\bf v} ({_{\ell{\bf v}}y_n})$ implies that 
\[
({_{2{\bf t}}z}) = (\xi^{-1}_{\bf t}({_{2{\bf v}}x}))  \le _{\bf t}   (\xi^{-1}_{\bf t}({_{\ell{\bf v}}y_n})) = ({_{\ell{\bf t}}y}),
\] 
a contradiction.

\smallskip

{\bf Case 2}: $\xi^{-1}_{\bf t}({_{2{\bf v}}x}) = {_{1{\bf t}}z}$.

Then using~\eqref{letters in s} and the fact that all the $\lambda$-words in $\{a^+b^+\}^{\le_\lambda}$ are $\lambda$-terms for $M$, we obtain: 
\begin{equation} 
\label{letters in t}
({_{1{\bf t}}z} ) \le_{\bf t} ( _{\bf t}c) \le_{\bf t} ({_{1{\bf t}}y} ).
\end{equation}

In view of Fact 2.6 in~\cite{Sapir-15N}, $\xi^{-1}_{\bf t}({_{1{\bf v}}x}) = {_{1{\bf t}}d}$ for some  $d \in \con(\bf t) =\con(\bf s)$. 
In this case,  $( _{1\bf t}d)  = (\xi^{-1}_{\bf t}({_{1{\bf v}}x})) \stackrel{\eqref{z in s}} {<_{\bf t}}  (\xi^{-1}_{\bf t}({_{2{\bf v}}x}) )=  ({_{1{\bf t}}z})  \stackrel{\eqref{letters in t}}{\le_{\bf t}} ( _{\bf t}c)$ and therefore $c\ne d$.

Suppose that  $( _{1\bf s}d)  <_{\bf s} ( _{\bf s}c)$. 
Since  $({_{1{\bf s}}z} ) \le _{\bf s} ( _{\bf s}c)  { \le_{\bf s}} ( \xi^{-1}_{\bf s}({_{1{\bf u}}y_n}))$ by~\eqref{letters in s}, and both $\xi(z)$ and $\xi(d)$ contain $x$, we obtain that $({_{2{\bf u}}x}) \le_{\bf u} ({_{\ell{\bf u}}y_n})$, which contradicts to~\eqref{letters in u}. 

Suppose that $( _{\bf s}c)  <_{\bf s} ( _{1\bf s}d)$. 
Then $M$ satisfies ${\bf s}(c,d)= cd^p\approx d^qcd^\ell = {\bf t}(c,d)$ for some $p,q\ge1$ and $\ell\ge0$, which contradicts the fact that $a^+b^+$ is a $\lambda$-term for $M$. 

\smallskip

Therefore, $({_{\ell{\bf v}}y_n}) <_{\bf v} ({_{2{\bf v}}x})$. Then the monoid $M$ is NFB by Fact~2.1 in~\cite{Sapir-15N}.
\end{proof}

\begin{proposition}
\label{P: K is a limit variety}
The varieties $\mathbf K$ and $\overleftarrow{\mathbf K}$ are limit and different from the limit varieties $\mathbf A^1\vee\overleftarrow{\mathbf A^1}$, $\mathbf J$,  $\overleftarrow{\mathbf J}$, $\mathbf L$ and $\mathbf M$.
\end{proposition}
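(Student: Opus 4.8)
The plan is to prove the three assertions in turn: that $\mathbf K$ is NFB, that every proper subvariety of $\mathbf K$ is FB, and that $\mathbf K$ is incomparable with the previously known limit varieties; the corresponding statements for $\overleftarrow{\mathbf K}$ then follow by duality. For the first assertion I would apply the Sufficient Condition~\ref{SC: for K} to the generating monoid $M=M_\lambda(bta^{+}b^{+})$, checking its two hypotheses. Since $\mathbf K=\var M$ contains $M_\lambda(bta^{+}b^{+})$ and the word $bta^{+}b^{+}$ is $2$-island-limited, Lemma~\ref{L: S_alpha in V2} gives at once that $bta^{+}b^{+}$ is a $\lambda$-term for $M$. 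It remains to verify $M\models\mathbf u_n\approx\mathbf v_n$ for every $n\ge1$. The key point is that the non-zero elements of $M$ are exactly the $\lambda$-factors of $bta^{+}b^{+}$ displayed in Example~\ref{E:K}; in particular each involves at most two multiple letters and one simple letter. Writing $\mathbf u_n=\mathbf a\,y_nx$ and $\mathbf v_n=\mathbf a\,xy_n$ with $\mathbf a=xy_1^2\cdots y_{n-1}^2y_n$, any substitution making either side non-zero must collapse all but one of the squared blocks to the empty word or a single letter, and in each surviving case the two images coincide (the base case being $M\models xy^2x\approx xyxy$). The Sufficient Condition then yields that $M$, hence $\mathbf K$, is NFB, and dualising gives that $\overleftarrow{\mathbf K}$ is NFB.

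To prove that $\mathbf K$ is a limit variety I would invoke Corollary~\ref{C: main}. Let $\mathbf V\subsetneq\mathbf K$ be proper. Being a subvariety of $\mathbf K$, which is generated by the $\mathscr J$-trivial monoid $M_\lambda(bta^{+}b^{+})$, $\mathbf V$ is itself a variety of $\mathscr J$-trivial monoids, so by Corollary~\ref{C: main} it is either HFB (whence FB, as required) or it contains one of the seven limit varieties. As $\mathbf V$ is proper it cannot contain $\mathbf K$, so it suffices to establish the key claim that $\mathbf K$ contains none of the other six limit varieties $\mathbf A^1\vee\overleftarrow{\mathbf A^1}$, $\mathbf J$, $\overleftarrow{\mathbf J}$, $\overleftarrow{\mathbf K}$, $\mathbf L$, $\mathbf M$: granting this, $\mathbf V\subseteq\mathbf K$ can contain none of the seven and is therefore HFB. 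The same claim simultaneously shows that $\mathbf K$ differs from each of the five named varieties, and (via $\overleftarrow{\mathbf K}\nsubseteq\mathbf K$ together with its dual) that $\mathbf K\ne\overleftarrow{\mathbf K}$.

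For the key claim I would exhibit, for each of the six varieties, an identity valid in $\mathbf K$ that it violates. A direct computation in $M$ shows $\mathbf K\models xtx\approx xtx^2$ (the two words being $\lambda$-equal) but $\mathbf K\nvDash xtx\approx x^2tx$: a suitable substitution ($x\mapsto b$, $t\mapsto taa$) sends $xtx$ to a non-zero element of $M$ while sending $x^2tx$ to $0$. Evaluating the defining monoids shows $\mathbf L,\mathbf M\nvDash xtx\approx xtx^2$, so neither lies in $\mathbf K$; and since the right-handed varieties $\overleftarrow{\mathbf K}$ and $\overleftarrow{\mathbf J}$ satisfy the mirror identity $xtx\approx x^2tx$ but fail $xtx\approx xtx^2$, they too are excluded. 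For $\mathbf J$ the natural separator is $\eqref{xtyxsy=xtxyxsy}$, the identity that $\mathbf J$ fails by Lemma~\ref{L: does not contain J}; one checks directly that $\mathbf K$ satisfies it. The join $\mathbf A^1\vee\overleftarrow{\mathbf A^1}$ is then excluded by verifying that its generating $\gamma$-words are not $\gamma$-terms for $\mathbf K$, that is, that $\mathbf K$ admits a non-trivial collapse of a representative extracted from Example~\ref{E:K}.

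The conceptually routine part is the reduction through Corollary~\ref{C: main}; the genuine obstacles are twofold. The harder computational step is the uniform verification of the infinite family $\mathbf u_n\approx\mathbf v_n$ in $M$, which must be argued once for all $n$ rather than case by case. Equally delicate is the exclusion of $\mathbf J$ and, especially, of $\mathbf A^1\vee\overleftarrow{\mathbf A^1}$: because $\mathbf K$ shares the identity $xtx\approx xtx^2$ with these varieties and, as the NFB proof forces, satisfies neither of the natural separators $xty^2x\approx xty^2xyx$ nor $xy^2tx\approx(xy)^2tx$, the separating identities cannot be read off cheaply and must instead be produced by a careful analysis of which words are $\lambda$- or $\gamma$-terms for $\mathbf K$, using Lemma~\ref{L: S_alpha in V2} and Fact~\ref{F:2-limited}.
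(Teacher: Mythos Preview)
Your overall architecture matches the paper exactly: establish NFB via the Sufficient Condition~\ref{SC: for K} together with Lemma~\ref{L: S_alpha in V2}, exclude the other six limit varieties from $\mathbf K$, and then invoke Corollary~\ref{C: main} to conclude that every proper subvariety is HFB. Your treatment of $\mathbf L$, $\mathbf M$, $\overleftarrow{\mathbf J}$, $\overleftarrow{\mathbf K}$ via $xtx\approx xtx^2$ is also what the paper does.

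The only substantive divergence is in the separators for $\mathbf J$ and $\mathbf A^1\vee\overleftarrow{\mathbf A^1}$. The paper handles \emph{both} with the single identity $xy^2tx\approx yxytx$: one checks that $\mathbf K$ satisfies it, while $\mathbf J=\mathbb M_\lambda(atba^+sb^+)$ and $\mathbf A^1=\mathbb M_\gamma(a^+b^+ta^+)$ fail it under the obvious substitution $x\mapsto a,\ y\mapsto b,\ t\mapsto t$ (in $\mathbf A^1$ the left side reduces to the defining $\gamma$-word $a^+b^+ta^+$, which is nonzero, while the right side is zero). Your alternative separator \eqref{xtyxsy=xtxyxsy} for $\mathbf J$ is valid---$\mathbf K$ does satisfy it and $\mathbf J$ does not---but note that Lemma~\ref{L: does not contain J} does not itself assert that $\mathbf J$ fails \eqref{xtyxsy=xtxyxsy}; that is a separate (easy) check on the generating monoid. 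More importantly, \eqref{xtyxsy=xtxyxsy} will \emph{not} separate $\mathbf A^1\vee\overleftarrow{\mathbf A^1}$, and your description of that exclusion (``verifying that its generating $\gamma$-words are not $\gamma$-terms for $\mathbf K$'') is correct in direction but stops short of naming the needed identity. The paper's choice of $xy^2tx\approx yxytx$ is precisely what you are groping for in your last paragraph: it is an identity of $\mathbf K$ whose left side lies in the $\gamma$-class $a^+b^+ta^+$ and whose right side does not, so it witnesses that $a^+b^+ta^+$ is not a $\gamma$-term for $\mathbf K$. Finding this one identity dissolves what you flag as the ``genuine obstacle''.
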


\begin{proof}
It is a routine to verify that $\mathbf K$ satisfies the identity~\eqref{long identity} for any $n\ge1$. 
Then Sufficient Condition and Lemma~\ref{L: S_alpha in V2} imply that $\mathbf K$ is NFB.

It is a routine to check that $\mathbf K$ satisfies the identities $xtx \approx xtx^2$ and $xy^2tx \approx yxytx$. 
Therefore, $\mathbf K$ contains none of the varieties $\overleftarrow{\mathbf J}$, $\overleftarrow{\mathbf K}$, $\mathbf L$ and $\mathbf M$ because they violate $xtx \approx xtx^2$.
Further, $\mathbf K$ does not contain the varieties $\mathbf A^1\vee\overleftarrow{\mathbf A^1}$ and $\mathbf J$ because these varieties do not satisfy $xy^2tx \approx yxytx$.

Since the variety $\mathbf K$ is NFB and does not contain any of the varieties $\mathbf A^1\vee\overleftarrow{\mathbf A^1}$, $\mathbf L$, $\mathbf M$, $\mathbf J$ or $\overleftarrow{\mathbf J}$, Corollary~\ref{C: main} implies that each proper subvariety of $\mathbf K$ is FB. Thus, $\mathbf K$ is a new limit variety. 
\end{proof}

\begin{proof}[Proof of Theorem~\ref{T: main}]
Corollary~\ref{C: main} and  Proposition~\ref{P: K is a limit variety}  imply that a variety of $\mathscr J$-trivial monoids is HFB if and only if it excludes the varieties $\mathbf A^1\vee\overleftarrow{\mathbf A^1}$, $\mathbf J$, $\overleftarrow{\mathbf J}$, $\mathbf K$, $\overleftarrow{\mathbf K}$, $\mathbf L$ and $\mathbf M$. Consequently, there are precisely seven limit varieties of $\mathscr J$-trivial monoids.
\end{proof}

\section{The subvariety lattice of $\mathbf K$}
\label{subvar lattice}

This section is devoted to a description of the subvariety lattice of the limit variety $\mathbf K = \mathbb M_\lambda(bta^{+}b^{+})$. 
The following statement collects some identities of $\mathbf K$ and can be easily verified. We use it sometimes without any reference.

\begin{lemma}
\label{L: identities in K}
The identities $xytxsy\approx yxtxsy$,~\eqref{xzxtxsx=xzxtsx} and~\eqref{xtyxy=xtxyxy} hold in the variety $\mathbf K$.\qed
\end{lemma}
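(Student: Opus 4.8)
The plan is to exploit that, by definition, $\mathbf K=\mathbb M_\lambda(bta^{+}b^{+})$ is generated by the finite monoid $M=M_\lambda(bta^{+}b^{+})$ (equivalently, by its $12$-element submonoid of Remark~\ref{R: monoid in K}), so that $\mathbf K$ satisfies an identity if and only if $M$ does. I would therefore verify each of the three identities directly in $M$ by inspecting substitutions of its variables by elements of $M$. All three identities have the same content, the same simple letters and the same multiple letters on both sides, so $ab$ is an isoterm and the necessary conditions of Lemma~\ref{L: xy isoterm} are already in force. Since $M$ has a zero and the variable sets of the two sides coincide, any substitution sending some variable to $0$ annihilates both sides simultaneously; hence it suffices to treat substitutions sending every variable to a nonzero element, that is, to one of the nonzero $\lambda$-factors of $bta^{+}b^{+}$ listed in Example~\ref{E:K}.

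The decisive constraint is that a representative word of $bta^{+}b^{+}$ is $b\,t\,a^{2}\,b^{2}$, in which $t$ occupies a single island of size $1$, the two $a$'s form one island, and the $b$'s form two islands, with the letters arranged in the left-to-right template $b,t,a,b$. Consequently a product of nonzero images is itself nonzero only if, after $\lambda$-reduction, it embeds as a contiguous $\lambda$-factor respecting this template and these island bounds. For a variable occurring at least twice, two copies of its image separated by the image of a simple letter cannot be re-merged by $\lambda$-reduction, so they would create too many islands of some letter and force the product to $0$; only when the intervening images collapse (or equal $1$) can the repeated occurrences fuse into a single admissible island. Tracking this for each identity confines the admissible images to very short words and leaves only finitely many substitutions to evaluate.

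Carrying out those evaluations is mechanical. For $xytxsy\approx yxtxsy$ the two sides differ solely by the order of the leading occurrences of $x$ and $y$; because the first-occurrence pattern $x$-before-$y$ must be matched against the template $b,t,a,b$, a nonzero value forces the images of $x$ and $y$ to avoid an order conflict (typically they coincide, or one of them is $1$, or both are powers of a single letter), and then transposing them does not change the reduced factor. For \eqref{xzxtxsx=xzxtsx} and \eqref{xtyxy=xtxyxy} the sides differ by a single extra occurrence of $x$: when the surrounding images are nontrivial both sides reduce to $0$, and when they collapse the extra $x$ is absorbed by the idempotent-like reduction $a^{k}=a^{+}$ $(k\ge 2)$, which is exactly the monoid reflection of the relation $xtx\approx xtx^{2}$ already satisfied by $\mathbf K$. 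In every surviving case both sides reduce to the same $\lambda$-factor of $b\,t\,a^{2}\,b^{2}$.

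The main obstacle is purely organizational: the $\lambda$-reduction permits adjacent occurrences of a letter to fuse, so naive counting of occurrences must be replaced by island-counting, and one must take care to enumerate every nonzero substitution rather than overlook a case in which repeated occurrences collapse. Once the admissible images are correctly listed, each comparison is between two short $\lambda$-factors and is immediate. As a cleaner alternative for the last two identities I would first attempt a purely syntactic derivation from $xtx\approx xtx^{2}$ together with $x^{2}\approx x^{3}$ and the already-recorded identity $xy^{2}tx\approx yxytx$ of $\mathbf K$, falling back on the substitution check only if that does not readily succeed.
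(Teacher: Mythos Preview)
The paper gives no explicit proof here---the lemma is stated with a \qed\ and the preceding sentence calls it ``easily verified''---so your plan of checking the three identities by direct substitution in the finite generating monoid $M_\lambda(bta^{+}b^{+})$ is precisely the routine the authors have in mind, and your island-counting heuristic for pruning the cases is sound. One small caution: drop the parenthetical appeal to the $12$-element submonoid of Remark~\ref{R: monoid in K}, since the proof of that remark (via Lemma~\ref{L: generating var}(ii) and Proposition~\ref{P: L(K)}) invokes the present lemma and would make your argument circular; work with the full $20$-element monoid instead.
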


Let $\mathbf A_0^1=\var A_0^1$, where
\[
A_0=\langle e, f \mid e^2=e,\, f^2=f,\,fe=0\rangle=\{e,f,ef,0\}.
\]

\begin{lemma}
\label{L: word problem A_0^1 vee M_lambda(ata^+)}
The variety  $\mathbf A_0^1\vee \mathbb M_\lambda(ata^+)$ satisfies an identity $\mathbf u\approx\mathbf v$ if and only if  each of the following holds:
\begin{itemize}
\item[\textup{(a)}] $\simp ({\bf u}) = \simp ({\bf v})$ and $\mul ({\bf u}) = \mul ({\bf v})$;
\item[\textup{(b)}] for each $x, y \in \con ({\bf u})$ we have $({_{1{\bf u}}x}) <_{\bf u} ({_{\ell{\bf u}}y})$ iff $({_{1{\bf v}}x}) <_{\bf v} ({_{\ell{\bf v}}y})$;
\item[\textup{(c)}] for each $t \in \simp ({\bf u})$ and  $x \in \mul ({\bf u})$
we have 
\[
({_{\bf u}t}) <_{\bf u} ({_{1{\bf u}}x}) \text{ iff } ({_{\bf v}t}) <_{\bf v} ({_{1{\bf v}}x}) \text{ and }({_{\bf u}t}) <_{\bf u} ({_{2{\bf u}}x}) \text{ iff } ({_{\bf v}t}) <_{\bf v} ({_{2{\bf v}}x}).
\]
\end{itemize}
\end{lemma}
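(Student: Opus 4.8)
The plan is to exploit the standard fact that a join of varieties satisfies an identity precisely when both joinands do: $\mathbf A_0^1\vee\mathbb M_\lambda(ata^+)\models\mathbf u\approx\mathbf v$ if and only if $A_0^1\models\mathbf u\approx\mathbf v$ and $M_\lambda(ata^+)\models\mathbf u\approx\mathbf v$. It therefore suffices to show that conditions (a)--(c) are exactly the combined constraints imposed by these two monoids, and I would organise the argument around which monoid is responsible for which condition. Condition (a) will follow from the observation that $ab$ is an isoterm for both monoids, so by Lemma~\ref{L: xy isoterm} every identity of either monoid preserves content and multiplicity. Condition (b) will be the contribution of $A_0^1$, and condition (c) that of $M_\lambda(ata^+)$.

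First I would prove necessity, assuming both monoids satisfy $\mathbf u\approx\mathbf v$. Condition (a) is immediate from Lemma~\ref{L: xy isoterm}. For (b), I would use the substitution $x\mapsto f$, $y\mapsto e$, and all remaining letters to $1$: since $fe=0$ while $e^if^j=ef\ne0$ for $i,j\ge1$, the image of a word in $A_0^1$ is $0$ exactly when some occurrence of $x$ precedes some occurrence of $y$, that is, exactly when $({_{1\mathbf u}x})<_{\mathbf u}({_{\ell\mathbf u}y})$; hence this relation must agree on $\mathbf u$ and $\mathbf v$. The case $x=y$ reduces to the multiplicity clause of (a), which is also recovered from the substitution $x\mapsto ef$. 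For (c), I would substitute a chosen multiple letter $x\mapsto a$ and a chosen simple letter $t\mapsto t$, sending every other letter to $1$, and read the resulting word over $\{a,t\}$ inside $M_\lambda(ata^+)$; since the nonzero elements of this monoid are precisely the $\lambda$-words $\le_\lambda ata^+$, the value distinguishes whether $t$ lies before the first, between the first and second, or after the second occurrence of $x$. For instance, once $t$ follows the second occurrence while the two occurrences have become adjacent, the relation $a^+t\not\le_\lambda ata^+$ forces the image to vanish. This yields both clauses of (c).

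For sufficiency I would assume (a)--(c) and verify $\mathbf u\approx\mathbf v$ in each joinand separately, under an arbitrary substitution $\varphi$; the heart of the matter is to show that the value $\varphi(\mathbf w)$ depends only on the combinatorial data preserved by (a)--(c). For $A_0^1$ this is routine: after disposing of the cases where some letter maps to $0$ or where a letter mapping to $ef$ is multiple (both forcing the product to be $0$ on each side by (a)), the remaining product lies in $\{1,e,f,ef\}$ and is completely determined by the content, the multiplicities, and the first-before-last order of occurrences, all of which are frozen by (a) and (b). For $M_\lambda(ata^+)$ one argues that $\varphi(\mathbf w)$ is governed by content, multiplicity, and the position of each simple letter relative to the first and second occurrences of each multiple letter, which is exactly the information controlled by (a) and (c).

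The main obstacle I anticipate is this sufficiency step for $M_\lambda(ata^+)$: one must pin down precisely which features of a word $\mathbf w$ survive an arbitrary substitution into this $\lambda$-monoid, tracking how the reduction $r_\lambda$ collapses repeated and adjacent occurrences and when a factor falls outside $\{ata^+\}^{\le_\lambda}$ and sends the product to $0$. Once it is established that the surviving data is exactly what (a) and (c) determine, the equality $\varphi(\mathbf u)=\varphi(\mathbf v)$ follows in both monoids, completing the argument.
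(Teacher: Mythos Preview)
Your overall strategy---characterise the identities of each joinand separately, then intersect---is exactly what the paper does. The difference is that the paper dispatches both halves by citation: it invokes Proposition~4.2 of \cite{Sapir-15} for $\mathbf A_0^1$ (giving (a)+(b)) and Proposition~6.9 of \cite{Gusev-Vernikov} for $\mathbf F=\mathbb M_\lambda(ata^+)$ (giving (a)+(c)), and the lemma follows immediately. You instead sketch direct proofs of those two characterisations. That is a legitimate and more self-contained route; your necessity arguments via targeted substitutions into $A_0^1$ and $M_\lambda(ata^+)$ are correct in outline.

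One point to tighten. You write that for $M_\lambda(ata^+)$ the value $\varphi(\mathbf w)$ is ``governed by content, multiplicity, and the position of each simple letter relative to the first and second occurrences of each multiple letter, which is exactly the information controlled by (a) and~(c).'' This is not quite right: the relative order of two \emph{simple} letters also matters (send $t_1\mapsto a$, $t_2\mapsto t$, everything else to $1$, and $at\neq ta$ in $M_\lambda(ata^+)$), and that order is recorded by~(b), not by~(a)+(c). This does not damage the proof of the lemma, since in the sufficiency direction you are assuming all of (a)--(c); but your sufficiency argument for $M_\lambda(ata^+)$ must explicitly use the simple--simple part of~(b) as well. (Incidentally, the paper's own phrasing of the $\mathbf F$ characterisation as ``(a) and~(c)'' has the same looseness; the cited Proposition~6.9 does include the order of simple letters.) With that correction, the case analysis you outline for sufficiency in $M_\lambda(ata^+)$---tracking which $\lambda$-class in $\{ata^+\}^{\le_\lambda}\cup\{0\}$ the image lands in---goes through.
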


\begin{proof}
Put $\mathbf F=\var\{xtx \approx xtx^2,\, x^2t \approx x^2tx,\, x^2y^2 \approx y^2x^2\}$.
According to Theorem~7.1(i) in~\cite{Sapir-21}, we have $\mathbf F= \mathbb M_\lambda(ata^+)$.
Proposition~6.9 in~\cite{Gusev-Vernikov} implies that $\mathbf F$ satisfies an identity
$\mathbf u\approx\mathbf v$ if and only if~(a) and~(c) hold.
Proposition~4.2 in~\cite{Sapir-15} implies that $\mathbf A_0^1$ satisfies an identity
$\mathbf u\approx\mathbf v$ if and only if~(a) and~(b) hold. 
Consequently, $\mathbf A_0^1\vee \mathbb M_\lambda(ata^+)  \models \mathbf u\approx\mathbf v$ if and only if~(a),~(b) and~(c) hold.
\end{proof}

\begin{lemma} 
\label{L: generating var}
\quad
\begin{itemize}
\item[\textup{(i)}] $\mathbf A_0^1 =\mathbb M_{\tau_1}(a^+b^+)= \mathbb M_\lambda(a^+b^+)=\var\{xtsx\approx xtxsx,\, (xy)^2 \approx (yx)^2\}$;
\item[\textup{(ii)}] $\mathbf A_0^1\vee \mathbb M_\lambda(ata^+) = 
\mathbb M_\lambda(ata^+b^+) =$
\[
= \var\{xtx \approx xtx^2,\,xtysxy\approx xtysyx,\, xytxsy \approx yxtxsy,\,  \eqref{xzxtxsx=xzxtsx}\}.
\]
\end{itemize}
\end{lemma}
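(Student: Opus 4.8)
The statement of part~(i) combines an isomorphism claim with an identity-basis claim. First I would check that $M_{\tau_1}(a^+b^+)$, $M_\lambda(a^+b^+)$ and $A_0^1$ are one and the same five-element monoid up to isomorphism: in each case the non-zero elements are $1,a^+,b^+,a^+b^+$ (respectively $1,e,f,ef$), and under the correspondence $a^+\leftrightarrow e$, $b^+\leftrightarrow f$ the products $a^+a^+=a^+$, $b^+b^+=b^+$, $b^+a^+=0$ match $e^2=e$, $f^2=f$, $fe=0$ exactly. (That $M_{\tau_1}(a^+b^+)$ and $M_\lambda(a^+b^+)$ have the same underlying set and multiplication is checked by listing the four non-zero $\le$-classes below $a^+b^+$.) Since isomorphic monoids generate the same variety, this yields $\mathbf A_0^1=\mathbb M_{\tau_1}(a^+b^+)=\mathbb M_\lambda(a^+b^+)$.

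For the equality $\mathbf A_0^1=\var\{xtsx\approx xtxsx,\ (xy)^2\approx(yx)^2\}$ I would prove both inclusions. The inclusion ``$\subseteq$'' is the verification that $A_0^1$ satisfies the two identities, done quickly through the description of the identities of $\mathbf A_0^1$ by conditions~(a) and~(b) recorded in the proof of Lemma~\ref{L: word problem A_0^1 vee M_lambda(ata^+)}. The reverse inclusion is the real content. Specialising $t,s\mapsto 1$ in $xtsx\approx xtxsx$ yields $x^2\approx x^3$, while $s\mapsto1$ and $t\mapsto1$ give $xtx\approx xtx^2$ and $xtx\approx x^2tx$; these let me delete every occurrence of a multiple letter except its first and last, reducing an arbitrary word to one in which each multiple letter occurs exactly twice. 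The identity $(xy)^2\approx(yx)^2$ then provides the commutation needed to permute these first and last occurrences into a fixed order. The task of checking that the normal form so obtained depends only on the data in~(a),(b)---so that any two words with the same data become provably equal---is the main obstacle in part~(i).

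For part~(ii) set $\mathbf V_1=\mathbf A_0^1\vee\mathbb M_\lambda(ata^+)$, $\mathbf V_2=\mathbb M_\lambda(ata^+b^+)$ and let $\mathbf V_3$ be the variety defined by the four displayed identities. The inclusion $\mathbf V_1\subseteq\mathbf V_2$ is easy: both $ata^+$ and $a^+b^+$ are $\le_\lambda$-subwords of $ata^+b^+$ (via the suffix $b^+$ and the prefix $at$, respectively), so $M_\lambda(ata^+)$ and $M_\lambda(a^+b^+)\cong A_0^1$ are Rees quotients of $M_\lambda(ata^+b^+)$ and hence lie in $\mathbf V_2$. For $\mathbf V_2\subseteq\mathbf V_1$ I would invoke Lemma~\ref{L: S_alpha in V2}: it suffices to show that the $2$-island-limited $\lambda$-word $ata^+b^+$ is a $\lambda$-term for $\mathbf V_1$. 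Taking the representative $\mathbf u=atab^2$ and supposing $\mathbf V_1\models\mathbf u\approx\mathbf v$, conditions~(a)--(c) of Lemma~\ref{L: word problem A_0^1 vee M_lambda(ata^+)} force $\simp(\mathbf v)=\{t\}$, $\mul(\mathbf v)=\{a,b\}$, all occurrences of $a$ before all occurrences of $b$, exactly one $a$ before $t$, and all of $b$ after $t$; hence $\mathbf v=ata^jb^k$ with $j\ge1$, $k\ge2$, which is $\lambda$-equivalent to $\mathbf u$. Finally $\mathbf V_1\subseteq\mathbf V_3$ is the verification of the four identities on the generating monoids $A_0^1$ and $M_\lambda(ata^+)$ (equivalently, since $\mathbf V_1=\mathbf V_2$, on $M_\lambda(ata^+b^+)$).

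The decisive step is the reverse inclusion $\mathbf V_3\subseteq\mathbf V_1$, namely that the four identities imply every identity $\mathbf u\approx\mathbf v$ satisfying~(a)--(c). The plan parallels part~(i): $xtx\approx xtx^2$ together with~\eqref{xzxtxsx=xzxtsx} let me delete every occurrence of a multiple letter other than its first, second and last---precisely the occurrences recorded by~(b) and~(c)---reducing to words in which each multiple letter occurs at most three times, and then $xtysxy\approx xtysyx$ and $xytxsy\approx yxtxsy$ reorder last, respectively first, occurrences to realise any arrangement consistent with~(a)--(c). Proving that these moves suffice to bring any two words carrying the same~(a)--(c) data to a common normal form is where the effort lies; once established it yields $\mathbf V_3\subseteq\mathbf V_1=\mathbf V_2$, so that $\mathbf V_1=\mathbf V_2=\mathbf V_3$ and, in particular, the second displayed equality of~(ii).
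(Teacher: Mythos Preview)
Your approach is correct and essentially matches the paper's. The paper handles part~(i) almost entirely by citation---\cite{Sapir-18,Sapir-21} for the isomorphisms and Proposition~3.2(a) of Edmunds~\cite{Edmunds-77} for the identity basis (then noting the equivalence with $\{xtsx\approx xtxsx,\ (xy)^2\approx(yx)^2\}$)---and for part~(ii) follows exactly your outline: reduce every identity to a $3$-limited one via $xtx\approx xtx^2$ and~\eqref{xzxtxsx=xzxtsx}, then derive the $3$-limited identities satisfying (a)--(c) from the two swap identities, delegating this last step to Lemma~4.1 of~\cite{Sapir-15} rather than executing the normal-form argument directly. One small shortcut the paper takes that you do not: to verify that $\mathbb M_\lambda(ata^+b^+)$ satisfies $xytxsy\approx yxtxsy$ and~\eqref{xzxtxsx=xzxtsx}, it observes $\mathbb M_\lambda(ata^+b^+)\subseteq\mathbf K$ and invokes Lemma~\ref{L: identities in K}.
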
 

\begin{proof}
(i) It follows from \cite[Section~7]{Sapir-18} that the monoid $A_0^1$ is isomorphic to $M_{\tau_1}(a^+b^+)$. 
According to Theorem~4.1(iv) and Fact~6.2 in~\cite{Sapir-21}, the monoid $M_{\lambda}(a^+b^+)$ also generates the variety $\mathbf A_0^1$. 
Proposition~3.2(a) in~\cite{Edmunds-77} says that  $A_0^1$ is finitely based by $\{xtsx\approx xtxsx,\,  xtysxy\approx xtysyx,\, xytxsy \approx yxtxsy\}$. 
It is easy to see that this set of identities is equivalent to $\{xtsx\approx xtxsx,\, (xy)^2 \approx (yx)^2\}$.{\sloppy

}
\smallskip

(ii) Since $\mathbf A_0^1 =\mathbb M_{\lambda}(a^+b^+)$ by Part~(i), we have
\[
\mathbf A_0^1\vee \mathbb M_\lambda(ata^+) \stackrel{\cite[\text{Corollary~2.5}]{Sapir-21}}{=} \mathbb M_\lambda(ata^+, a^+b^+).
\]
The inclusion $\mathbb M_\lambda(ata^+, a^+b^+) \subseteq \mathbb M_\lambda(ata^+b^+)$ follows from Fact~\ref{F:2-limited} and Lemma~\ref{L: S_alpha in V2}. 
The inclusion $\mathbb M_\lambda(ata^+, a^+b^+) \supseteq \mathbb M_\lambda(ata^+b^+)$ holds by the fact that $ata^+b^+$ is a $\lambda$-term for $\mathbb M_\lambda(ata^+, a^+b^+)$ and Lemma~\ref{L: S_alpha in V2}. 
Therefore, $\mathbf A_0^1\vee \mathbb M_\lambda(ata^+)= \mathbb M_\lambda(ata^+b^+)$.

It is routine to check that $\mathbb M_\lambda(ata^+b^+)$ satisfies $xtx\approx xtx^2$ and $xtysxy\approx xtysyx$.
Since $a^+b^+$ is a $\lambda$-term for ${\mathbf K} = \mathbb M_\lambda(bta^{+}b^{+})$ by Fact~\ref{F:2-limited}, $\mathbb M_\lambda(a^+b^+)\subseteq\mathbf K$ by Lemma~\ref{L: S_alpha in V2}. 
Clearly, $ata^+$ is a $\lambda$-term for $\mathbf K$.
It follows that $\mathbb M_\lambda(ata^+)\subseteq{\mathbf K}$ by Lemma~\ref{L: S_alpha in V2}.  
In view of the above, $\mathbb M_\lambda(ata^+b^+)$ is a subvariety of ${\mathbf K}$. Hence the variety $\mathbb M_\lambda(ata^+b^+)$ satisfies  the identities $xytxsy\approx yxtxsy$ and~\eqref{xzxtxsx=xzxtsx} by Lemma~\ref{L: identities in K}.

An identity is called {\em 3-limited} if every letter occurs in each side of it at most~3 times. 
The identity~\eqref{xzxtxsx=xzxtsx} allows us to add and delete the occurrences of a letter $x$ between the second and the last occurrences of $x$, while the identity $xtx \approx xtx^2$ allows us to add and delete the occurrences of  $x$ next to the non-first occurrence of $x$.
Thus, every identity of  $\mathbb M_\lambda(ata^+b^+)$ can be derived from $xtx \approx xtx^2$, \eqref{xzxtxsx=xzxtsx} and a 3-limited identity of $\mathbb M_\lambda(ata^+b^+)$.

Let $\mathbf u\approx\mathbf v$ be a 3-limited  identity of  $\mathbb M_\lambda(ata^+b^+)$.
Since $\mathbf u\approx\mathbf v$ has Properties (a), (b) and (c) in Lemma~\ref{L: word problem A_0^1 vee M_lambda(ata^+)}, it is well-balanced and is a consequence from $\{xtysxy\approx xtysyx,\, xytxsy \approx yxtxsy\}$ by Lemma~4.1 in~\cite{Sapir-15}.
Therefore,
$$
\{xtx \approx xtx^2,\, xytxsy \approx yxtxsy,\,xtysxy\approx xtysyx,\,  \eqref{xzxtxsx=xzxtsx}\}
$$
is an identity basis for $\mathbf A_0^1\vee \mathbb M_\lambda(ata^+) = 
\mathbb M_\lambda(ata^+b^+)$.
\end{proof}

\begin{fact} 
\label{F: does not contain A_0^1}   
Let $\mathbf V$ be a variety of $\mathscr J$-trivial monoids. 
Then $\mathbf V$ does not contain $A_0^1$ if and only if $\mathbf V$ is a variety of aperiodic monoids with commuting idempotents.
\end{fact}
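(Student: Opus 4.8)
The plan is to reduce the statement to a single word identity and then analyse the ways in which it can fail to hold in $A_0^1$. Since every $\mathscr J$-trivial monoid is aperiodic, the word ``aperiodic'' on the right-hand side is automatic, so it suffices to prove that $A_0^1\notin\mathbf V$ if and only if the idempotents of $\mathbf V$ commute. By Fact~\ref{F: J-trivial} the variety $\mathbf V$ satisfies $x^n\approx x^{n+1}$ and $(xy)^n\approx(yx)^n$ for some $n\ge1$; consequently the idempotents of any $M\in\mathbf V$ are precisely the elements $g^n$ with $g\in M$, and ``idempotents commute'' is equivalent to $\mathbf V\models x^ny^n\approx y^nx^n$. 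The easy direction is then immediate: the idempotents $e,f$ of $A_0^1$ satisfy $fe=0\ne ef$, so $A_0^1$ has non-commuting idempotents and hence cannot lie in a variety all of whose members have commuting idempotents.

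For the converse I would argue contrapositively, using the description of the identities of $A_0^1$ recorded in the proof of Lemma~\ref{L: word problem A_0^1 vee M_lambda(ata^+)} (from Proposition~4.2 in~\cite{Sapir-15}): $A_0^1\models\mathbf u\approx\mathbf v$ if and only if \textup{(a)} $\simp(\mathbf u)=\simp(\mathbf v)$ and $\mul(\mathbf u)=\mul(\mathbf v)$, and \textup{(b)} for all $x,y$ one has $({_{1\mathbf u}}x)<_{\mathbf u}({_{\ell\mathbf u}}y)$ iff $({_{1\mathbf v}}x)<_{\mathbf v}({_{\ell\mathbf v}}y)$. If $A_0^1\notin\mathbf V$ then $\var A_0^1\not\subseteq\mathbf V$, so $\mathbf V$ has an identity $\mathbf u\approx\mathbf v$ that fails in $A_0^1$, that is, one violating \textup{(a)} or \textup{(b)}; I would then show that in either case the idempotents of $\mathbf V$ commute.

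If \textup{(a)} fails then $xy$ is not an isoterm for $\mathbf V$ by Lemma~\ref{L: xy isoterm}, so Lemma~\ref{L: xy is not an isoterm} makes $\mathbf V$ commutative or idempotent; an idempotent $\mathscr J$-trivial variety is commutative (here $n=1$, whence $xy\approx yx$), so in both cases idempotents commute. The substantive case is a violation of \textup{(b)}: here \textup{(a)} holds, so $\con(\mathbf u)=\con(\mathbf v)$, and, without loss of generality, there are letters $x,y$ with first $x$ before last $y$ in $\mathbf u$ but all occurrences of $y$ before all occurrences of $x$ in $\mathbf v$. Deleting all letters other than $x$ and $y$ and substituting $x\mapsto x^n$, $y\mapsto y^n$, I obtain $\mathbf V\models r\approx y^nx^n$, where $r$ is the alternating word in the idempotents $e=x^n$, $f=y^n$ obtained from $\mathbf u(x,y)$; since $\mathbf u(x,y)$ has first $x$ before last $y$, its reduced form contains the factor $xy$, so $r$ contains $ef$ as a factor. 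Interchanging the variables $x$ and $y$ yields the companion identity $\mathbf V\models\bar r\approx x^ny^n$, where $\bar r$ contains $fe$ as a factor.

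The crux — and the step I expect to be the main obstacle — is converting these two identities into commutativity of idempotents, and this is exactly where $\mathscr J$-triviality enters through the ideal order. For arbitrary idempotents $e,f$ of a member $M\in\mathbf V$ (recall every idempotent of $M$ has the form $g^n$), the first identity gives $fe=r(e,f)\in M^1(ef)M^1$, so $fe\le_{\mathscr J}ef$, while the companion identity gives $ef\le_{\mathscr J}fe$; hence $ef\mathrel{\mathscr J}fe$, and $\mathscr J$-triviality forces $ef=fe$. Thus the idempotents of $\mathbf V$ commute, completing the converse. The delicate points to verify carefully are the combinatorial reduction ``first $x$ before last $y$ implies the factor $xy$ survives in the reduced word'' and the remark that, as $g,h$ range over $M$, the powers $g^n,h^n$ range over all idempotents, so that the derived word identity indeed yields $ef=fe$ for every pair of idempotents of $M$.
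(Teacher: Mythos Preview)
Your proof is correct. The paper's argument for the substantive direction is closely related but organised differently: instead of the \textup{(a)}--\textup{(b)} characterisation of the identities of $A_0^1$, it invokes Lemma~\ref{L: generating var}(i) (namely $\mathbf A_0^1=\mathbb M_{\tau_1}(a^+b^+)$) together with Lemma~\ref{L: S_alpha in V1} to obtain directly an identity $x^py^k\approx\mathbf w$ in which $\mathbf w$ contains $yx$ as a subword, thereby bypassing your case split. Evaluating at idempotents $e,f$ gives $ef=\mathbf w(e,f)$; if $\mathbf w(e,f)\ne fe$ the paper multiplies on the left by $e$ and on the right by $f$ to obtain $ef=(ef)^m$ with $m\ge2$, iterates to make $m\ge n$, and then applies the \emph{identity} $(xy)^n\approx(yx)^n$ from Fact~\ref{F: J-trivial} (together with the companion $fe=(fe)^m$ obtained by swapping $e$ and $f$) to conclude $ef=(ef)^m=(fe)^m=fe$. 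So the genuine difference lies in the final step: you exploit $\mathscr J$-triviality \emph{directly} via the ideal order to get $ef\mathrel{\mathscr J}fe$, whereas the paper exploits it \emph{indirectly} through the identity $(xy)^n\approx(yx)^n$ that it entails. Your route avoids the paper's ``$\mathbf w(e,f)=fe$ or not'' case distinction and the left--right multiplication trick; the paper's route avoids your \textup{(a)}/\textup{(b)} split and the appeals to Lemmas~\ref{L: xy isoterm} and~\ref{L: xy is not an isoterm}.
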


\begin{proof} 
\textit{Necessity}. Suppose that $A_0^1\notin\mathbf V$. Then in view of Lemmas~\ref{L: S_alpha in V1} and~\ref{L: generating var}(i),  the variety $\mathbf V$ satisfies an identity $x^py^k\approx \mathbf w$, where $p, k \ge 1$ and $\mathbf w$ contains $yx$ as a subword. 
Without loss of generality, we may assume that $\con(\mathbf w)=\{x,y\}$.

If $M\in\mathbf V$ and $e,f$ are two idempotents of $M$ then $ef = {\bf w}(e,f)$, where $\mathbf w$ contains $fe$ as a subword. 
If ${\bf w}(e,f) = fe$ then we are done. 
Otherwise, we multiply this equality by $e$ on the left and $f$ on the right and obtain $ef = (ef)^m$ for some $m \ge 2$.

In view of Fact~\ref{F: J-trivial}, $\mathbf V$ satisfies the identities~\eqref{id J-trivial} for some $n\ge1$.  The equality $ef = (ef)^m$ can be iterated to make $m \ge n$. Since $e$ and $f$ are arbitrary idempotents of $M$, we also have $fe = (fe)^m$. 
Therefore,
\[
ef = (ef)^m \stackrel{ \eqref{id J-trivial}}{=}  (fe)^m = fe.
\]

\smallskip

\textit{Sufficiency} follows from the fact that the idempotents $e$ and $f$ of $A_0^1$ do not commute.
\end{proof}

For a monoid variety $\mathbf V$, we denote its subvariety lattice by $L(\mathbf V)$.

\begin{proposition}
\label{P: L(K)}
The lattice $L(\mathbf K)$  has the form shown in Fig.~\ref{pic: L(K)}. 
\end{proposition}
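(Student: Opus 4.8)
The plan is to pin down the finitely many subvarieties of $\mathbf K$, verify the edges drawn in Fig.~\ref{pic: L(K)}, and then prove that the list is complete. Throughout I would lean on the machinery already assembled: Lemma~\ref{L: S_alpha in V2} (together with Fact~\ref{F:2-limited}) converts an inclusion $\mathbb M_\lambda(\mathtt w)\subseteq\mathbf V$ into the single combinatorial assertion that $\mathtt w$ is a $\lambda$-term for $\mathbf V$, while Lemma~\ref{L: generating var} supplies explicit finite bases for $\mathbf A_0^1=\mathbb M_\lambda(a^+b^+)$ and for $\mathbf A_0^1\vee\mathbb M_\lambda(ata^+)=\mathbb M_\lambda(ata^+b^+)$. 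First I would write down the candidate subvarieties with their generators --- the trivial variety, the variety of semilattices, $\mathbb M_\lambda(ata^+)$, $\mathbf A_0^1$, the joins of these, $\mathbb M_\lambda(ata^+b^+)$, and $\mathbf K$ itself --- and for each one check that it sits inside $\mathbf K$ by verifying, exactly as in the proof of Lemma~\ref{L: generating var}(ii), that the relevant $\lambda$-words from Example~\ref{E:K} are $\lambda$-terms for $\mathbf K$.

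The Hasse diagram I would then settle one edge at a time. Every claimed containment reduces, by Lemma~\ref{L: S_alpha in V2}, to checking that a generator of the lower variety is a $\lambda$-term for the upper one; every claimed strictness is witnessed by a single identity. For instance $x^2y^2\approx y^2x^2$ separates the commutative--idempotent part from $\mathbf A_0^1$ (the idempotents of $A_0^1$ do not commute, by Fact~\ref{F: does not contain A_0^1}), and $\mathbf K\neq\mathbb M_\lambda(ata^+b^+)$ because $bta^+b^+$ is a $\lambda$-term for $\mathbf K$ but, again by Lemma~\ref{L: S_alpha in V2}, is \emph{not} a $\lambda$-term for $\mathbb M_\lambda(ata^+b^+)$; the remaining separations are read off Lemma~\ref{L: identities in K} or from the word problem recorded in Lemma~\ref{L: word problem A_0^1 vee M_lambda(ata^+)}. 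Collecting these witnesses gives distinctness of all listed varieties at once.

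The substance of the argument is completeness, and here I would organize the analysis around the dichotomy of Fact~\ref{F: does not contain A_0^1}. Let $\mathbf V\subseteq\mathbf K$. If $A_0^1\notin\mathbf V$, then $\mathbf V$ is a variety of aperiodic monoids with commuting idempotents satisfying $xtx\approx xtx^2$; such varieties form a small, explicitly described lattice by~\cite{Gusev-JAA,Gusev-Vernikov}, and intersecting it with the subvarieties of $\mathbf K$ leaves precisely the $A_0^1$-free entries of Fig.~\ref{pic: L(K)}, all of which lie below $\mathbb M_\lambda(ata^+)$. If instead $A_0^1\in\mathbf V$, then $\mathbf A_0^1\subseteq\mathbf V\subseteq\mathbf K$, so $\mathbf V$ is finitely based unless $\mathbf V=\mathbf K$ since $\mathbf K$ is a limit variety (Proposition~\ref{P: K is a limit variety}). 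I would then reduce any identity of such a $\mathbf V$ to a $3$-limited identity modulo $xtx\approx xtx^2$ and~\eqref{xzxtxsx=xzxtsx}, exactly as in the proof of Lemma~\ref{L: generating var}(ii), and read off from the resulting finite set of normal forms which of the named varieties $\mathbf V$ must equal.

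The main obstacle I expect is this last completeness step for the varieties containing $A_0^1$: one must show there is nothing new between $\mathbb M_\lambda(ata^+b^+)$ and $\mathbf K$, and that the joins of $\mathbf A_0^1$ with the lower $A_0^1$-free varieties already exhaust the upper part of the lattice. This will require the well-balanced/compact normal-form reduction of Section~\ref{suf cond} (in the one-sided form available to $\mathbf K$, together with Lemma~\ref{L: compact}) coupled with a careful bookkeeping of which $\lambda$-words in $\{bta^+b^+\}^{\le_\lambda}$ are \emph{forced} to be $\lambda$-terms once $A_0^1$ is present, so that each such $\mathbf V$ is determined by a down-set of that finite poset and hence coincides with one of the listed varieties.
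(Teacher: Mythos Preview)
Your outline gets the broad shape right but has a genuine gap at the top of the lattice, precisely where you flag the ``main obstacle.'' The paper does \emph{not} handle the interval $[\mathbf A_0^1\vee\mathbb M_\lambda(ata^+),\mathbf K)$ by reducing to $3$-limited identities and bookkeeping over down-sets of $\{bta^+b^+\}^{\le_\lambda}$; that route does not obviously terminate, since even after the reduction modulo $xtx\approx xtx^2$ and~\eqref{xzxtxsx=xzxtsx} the set of $3$-limited identities is still infinite (arbitrarily many simple letters). Instead the paper applies Lemma~\ref{L: does not contain K} directly: any proper $\mathbf X\subsetneq\mathbf K$ with $\mathbb M_\lambda(ata^+)\subseteq\mathbf X$ satisfies~\eqref{xtyyx=xtyxyx}, and a short derivation using this identity together with $(xy)^2\approx(yx)^2$ (available since $\mathbf K$ is $\mathscr J$-trivial) and $xtx\approx xtx^2$ forces $xtysxy\approx xtysyx$. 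That is exactly the missing relation in the basis of Lemma~\ref{L: generating var}(ii), so $\mathbf X\subseteq\mathbb M_\lambda(ata^+b^+)$, and the interval collapses to a single point. Your proposal never invokes Lemma~\ref{L: does not contain K}, so you have no mechanism to produce a \emph{specific} identity separating a proper subvariety from $\mathbf K$.

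A secondary issue: your dichotomy is on $A_0^1$ first, whereas the paper splits first on $\mathbb M_\lambda(ata^+)$. The advantage of the paper's order is that the case $\mathbb M_\lambda(ata^+)\nsubseteq\mathbf X$ immediately gives $\mathbf X\subseteq\mathbf A_0^1$ (via Lemma~\ref{L: does not contain F} and the identities of $\mathbf K$), so one can simply quote Lee's known description of $L(\mathbf A_0^1)$. In your split, the $A_0^1$-free branch must still account for $\mathbf N=\mathbb M_\lambda(a^+btb^+)$ and $\mathbb M_\lambda(a^+ta^+)$, which sit \emph{above} $\mathbb M_\lambda(ata^+)$, contrary to your parenthetical claim. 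The paper handles this branch by showing $\mathbf X\subseteq\mathbf N$ and then invoking~\cite[Lemma~3.2]{Gusev-SF} and~\cite[Lemmas~3.4,~3.5]{Gusev-JAA} to enumerate the possible defining identities within $[\mathbb M_\lambda(a^+ta^+),\mathbf N]$, concluding that only the endpoints survive. Your citation of \cite{Gusev-JAA,Gusev-Vernikov} for ``a small explicitly described lattice'' is in the right spirit but not quite the right references, and the inclusion you assert is wrong as stated.
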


\begin{figure}[htb]
\unitlength=1mm
\linethickness{0.4pt}
\begin{center}
\begin{picture}(55,85)
\put(35,5){\circle*{1.33}}
\put(35,15){\circle*{1.33}}
\put(35,25){\circle*{1.33}}
\put(35,35){\circle*{1.33}}
\put(45,45){\circle*{1.33}}
\put(25,45){\circle*{1.33}}
\put(35,55){\circle*{1.33}}
\put(15,55){\circle*{1.33}}
\put(25,65){\circle*{1.33}}
\put(45,65){\circle*{1.33}}
\put(35,75){\circle*{1.33}}
\put(35,85){\circle*{1.33}}
\put(30,70){\circle*{1.33}}

\put(35,5){\line(0,1){30}}
\put(35,35){\line(-1,1){20}}
\put(35,35){\line(1,1){10}}
\put(25,45){\line(1,1){10}}
\put(25,45){\line(1,1){20}}
\put(15,55){\line(1,1){20}}
\put(45,45){\line(-1,1){20}}
\put(45,65){\line(-1,1){10}}
\put(35,75){\line(0,1){10}}

\put(35,2){\makebox(0,0)[cc]{${\mathbb M}(\varnothing)$}}
\put(37,15){\makebox(0,0)[lc]{${\mathbb M}(1)$}}
\put(37,25){\makebox(0,0)[lc]{${\mathbb M}(x)$}}
\put(37,35){\makebox(0,0)[lc]{${\mathbb M}(xy)$}}
\put(23,45){\makebox(0,0)[rc]{${\mathbb M}_\gamma(ta^+)$}}
\put(14,55){\makebox(0,0)[rc]{${\mathbb M}_\lambda(ata^+)$}}
\put(57,55){\makebox(0,0)[rc]{${\mathbb M}_\gamma(a^+ta^+)$}}
\put(23,65){\makebox(0,0)[rc]{${\mathbb M}_\lambda(a^+ta^+)$}}
\put(61,75){\makebox(0,0)[rc]{${\mathbb M}_\lambda(ata^+b^+)$}}
\put(29,71){\makebox(0,0)[rc]{${\mathbf N} ={\mathbb M}_\lambda(a^+btb^+)$}}
\put(47,45){\makebox(0,0)[lc]{${\mathbb M_\gamma(a^+t)}$}}
\put(47,65){\makebox(0,0)[lc]{$ \mathbf A_0^1 = {\mathbb M}_{\tau_1}(a^+b^+)$}}
\put(37,85){\makebox(0,0)[lc]{${\mathbf K} = {\mathbb M}_\lambda(bta^+b^+)$}}
\end{picture}
\end{center}
\caption{the lattice $L(\mathbf K)$}
\label{pic: L(K)}
\end{figure}
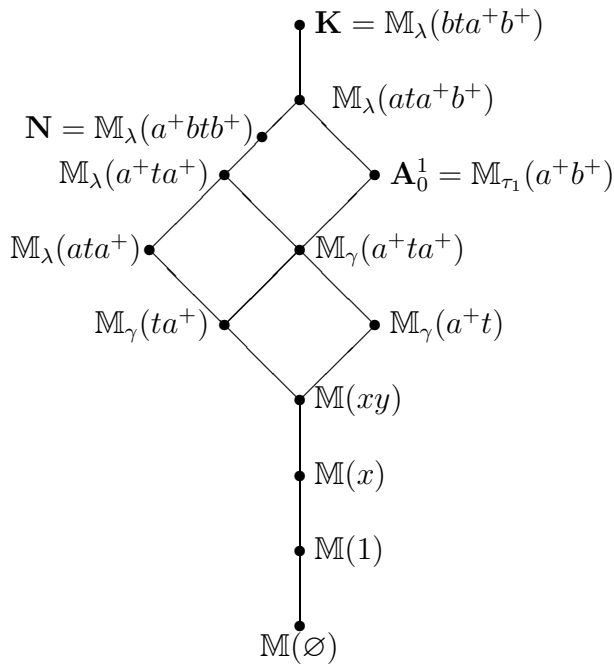

\begin{proof}
Let $\mathbf X$ be a proper subvariety of $\mathbf K$. If $\mathbb M_\lambda(ata^+) \nsubseteq \mathbf X$, then $\mathbf X \models xtx \approx x^2tx$ by Lemma~\ref{L: does not contain F}.  
Then $\mathbf X$ satisfies
\[
xtsx\stackrel{xtx \approx x^2tx}\approx x^2tsx\stackrel{\eqref{xzxtxsx=xzxtsx}}\approx x^2txsx\stackrel{xtx \approx x^2tx}\approx xtxsx.
\]

Since $\{xtsx \approx xtxsx,\, (xy)^2 \approx (yx)^2\}$ is a  basis of identities for $\mathbf A_0^1$  by Lemma~\ref{L: generating var}(i), we have $\mathbf X\subseteq \mathbf A_0^1$. 
The lattice $L(\mathbf A_0^1)$ is as shown in Fig.~\ref{pic: L(K)} by~\cite[Fig.~2]{Lee-12b}.
According to Theorem~4.1 in~\cite{Sapir-21}, the monoids $M_\gamma(a^+ta^+)$, $M_\gamma(ta^+)$ and $M_\gamma(a^+t)$, respectively, generate the same varieties as the monoids $B_0^1$, $I^1$ and $J^1$ in \cite[Fig.~2]{Lee-12b}.
So, we may assume that $\mathbb M_\lambda(ata^+) \subseteq\mathbf X$.{\sloppy

}
If  $\mathbf A_0^1\nsubseteq\mathbf X$, then $\mathbf X$ is a variety of aperiodic monoids with commuting idempotents by Fact~\ref{F: does not contain A_0^1}. 
Since, for any $M\in\mathbf X$ and $x,y\in M$, the elements $x^2$ and $y^2$ are idempotents, we have $\mathbf X\models x^2y^2\approx y^2x^2$. 
Hence, in view of Lemma~\ref{L: identities in K}, $\mathbf X$ is a subvariety of 
\[
\mathbf N= \var\{xtx \approx xtx^2,\, xytxsy \approx yxtxsy, \, x^2y^2\approx y^2x^2, \,  \eqref{xzxtxsx=xzxtsx}\}.
\] 
Put $\mathbf E=\var\{x^2\approx x^3,\,x^2y\approx xyx,\,x^2y^2\approx y^2x^2\}$.
In view of~\cite[Lemma~3.2]{Gusev-SF}, the lattice $L(\mathbf N)$ is the set-theoretical union of the lattice $L({\bf F} \vee \overleftarrow{\bf E})$ and the interval $[{\bf F} \vee \overleftarrow{\bf E}, \mathbf N]$. 
The lattice $L({\bf F} \vee \overleftarrow{\bf E})$  is as shown in Fig.~\ref{pic: L(K)} by~\cite[Fig.~1]{Gusev-SF}.
According to Theorem~7.2(ii) in~\cite{Sapir-21}, the monoid $M_\lambda(a^+ta^+)$ generates the variety ${\bf F} \vee \overleftarrow{\bf E}$.
Thus, we may assume that $\mathbf X\in [\mathbb M_\lambda(a^+ta^+),\mathbf N]$.

Clearly, $\mathbf N$ satisfies~\eqref{xtyxsy=xtxyxsy}. Then Lemmas~3.4 and~3.5 in~\cite{Gusev-JAA} imply that each variety in $[\mathbb M_\lambda(a^+ta^+), \bf N]$ is defined within $\mathbf N$ by some (possibly empty) set of the following identities:~\eqref{xzxtxsx=xzxtsx},
\begin{align*}
xytxy&\approx yxtxy,\\
yx^2txy&\approx xyxtxy,\\
x^2ytxy&\approx xyxtxy,\\
xy\prod_{i=1}^{n+1} (t_i \mathbf e_i)&\approx yx\prod_{i=1}^{n+1} (t_i \mathbf e_i),\\
yx^2\prod_{i=2}^{n+1} (t_i \mathbf e_i)&\approx xyx\prod_{i=1}^n (t_i \mathbf e_i),\\
x^2y\prod_{i=1}^{n+1} (t_i \mathbf e_i)&\approx xyx\prod_{i=1}^{n+1} (t_i \mathbf e_i),\\
x^2y\prod_{i=2}^{n+1} (t_i \mathbf e_i)&\approx xyx\prod_{i=2}^{n+1} (t_i \mathbf e_i),
\end{align*}
where $n \ge1$ and
$$
\mathbf e_i=
\begin{cases}
x&\text{if }i\text{ is odd},\\
y&\text{if }i\text{ is even}.
\end{cases}
$$
The identity~\eqref{xzxtxsx=xzxtsx} holds in $\mathbf N$ by the definition. 
The rest of these identities except for $x^2yty\approx xyxty$ follow from the identity $xytxsy \approx yxtxsy$, which holds in $\mathbf N$.

Therefore, each variety in $[\mathbb M_\lambda(a^+ta^+), \bf N]$ is defined within $\mathbf N$ by the trivial identity (then this variety coincides with $\mathbf N$) or the identity $x^2yty\approx xyxty$ (then this variety equals to $\mathbb M_\lambda(a^+ta^+)$). 
Therefore, $\mathbf X\in \{\mathbb M_\lambda(a^+ta^+),\mathbf N\}$. 

We have $\mathbb M_\lambda(a^+ta^+) = \mathbb M_\lambda(ata^+, a^+t)$ by Theorem~7.2(ii) in~\cite{Sapir-21}.
Since $ata^+$ and $a^+t$ are $\lambda$-terms for  $M_\lambda (a^+btb^+)$, the  variety $\mathbb M_\lambda (a^+btb^+)$  contains $\mathbb M_\lambda(a^+ta^+)$ by  Lemma~\ref{L: S_alpha in V2}. 
Since $\mathbb M_\lambda (a^+ta^+) \models x^2yty \approx xyxty$, the $\lambda$-word $a^+btb^+$ is not a $\lambda$-term for  $\mathbb M_\lambda(a^+ta^+)$. 
Hence, the inclusion  $\mathbb M_\lambda (a^+btb^+) \supset \mathbb M_\lambda(a^+ta^+)$ is proper in view of Lemma~\ref{L: S_alpha in V2}. 
It is routine to verify that ${\mathbb M}_\lambda(a^+btb^+)\subseteq\mathbf N$. 
Then, since ${\mathbb M}_\lambda(a^+btb^+)$ properly contains $\mathbb M_\lambda(a^+ta^+)$, we have ${\mathbf N}={\mathbb M}_\lambda(a^+btb^+)$.

Thus, we may assume that $\mathbf A_0^1\vee \mathbb M_\lambda(ata^+)\subseteq\mathbf X$.  
Then $\bf X$ satisfies the identity~\eqref{xtyyx=xtyxyx} by Lemma~\ref{L: does not contain K}.
Hence, the variety $\mathbf X$ satisfies the identities
\[
xtysyx\!\stackrel{xtx \approx xtx^2}\approx\! xtysy^2x\stackrel{\eqref{xtyyx=xtyxyx}}\approx xtys(yx)^2\!\stackrel{(xy)^2 \approx (yx)^2}\approx\! xtys(xy)^2\stackrel{\eqref{xtyyx=xtyxyx}}\approx xtysx^2y\!\stackrel{xtx \approx xtx^2}\approx\! xtysxy
\]
and so the identity $xsytxy \approx xsytyx$. Since $\bf X$ is a subvariety of $\bf K$, the variety $\bf X$ satisfies the identities in $\{xtx \approx xtx^2,\, xytxsy \approx yxtxsy,\,\eqref{xzxtxsx=xzxtsx}\}$.
Therefore, $\mathbf X = \mathbb M_\lambda(ata^+b^+)$ by Lemma~\ref{L: generating var}(ii).
\end{proof}

\begin{remark}
\label{R: monoid in K}
The variety $\mathbf K$ is generated by the submonoid of $M_\lambda(bta^{+}b^{+})$ generated by $\{a^+,b,ta^+\}$, which is isomorphic to the 12-element monoid $S^1$, where
\begin{align*}
S&=\langle a,b,c\mid a^2=a,\,b^2=b^3,\,abc=ac=ba=b^2c=0,\,bcb^2=bcb,\,ca=c\rangle\\
&=\{a,b,c,ab,ab^2,b^2,bc,bcb,cb,cb^2,0\}.
\end{align*}
\end{remark}

\begin{proof}
It is routine to check that a submonoid of $M_\lambda(bta^{+}b^{+})$ generated by the set $\{a^+,b,ta^+\}$ is isomorphic to $S^1$. Evidently, $S^1\in \mathbf K$. 
We note that $S^1$ violates $xtysxy\approx xtysyx$ because $bca\cdot1\cdot ab=bcb\ne0$ but $bca\cdot1\cdot ba=0$ in $S^1$. 
This fact and Lemmas~\ref{L: identities in K} and~\ref{L: generating var}(ii) imply that $S^1\notin \mathbb M_\lambda(ata^+b^+)$. 
According to Proposition~\ref{P: L(K)}, $\mathbf K$ is generated by $S^1$. 
\end{proof}

\section*{Acknowledgments}
We would like to thank the anonymous referee for providing a careful review with lots of valuable suggestions and a great sense of humor.

\small

Institute of Natural Sciences and Mathematics, Ural Federal University, Lenina 51, Ekaterinburg, 620000, Russia

\textit{E-mail address}: \texttt{sergey.gusb@gmail.com}

\medskip

Department of Mathematics, Vanderbilt University, Nashville, TN 37240, USA

\textit{E-mail address}: \texttt{olga.sapir@gmail.com}

\end{document}